\begin{document}

\numberwithin{equation}{section}
\newtheorem{thm}[equation]{Theorem}
\newtheorem{pro}[equation]{Proposition}
\newtheorem{prob}[equation]{Problem}
\newtheorem{qu}[equation]{Question}
\newtheorem{cor}[equation]{Corollary}
\newtheorem{con}[equation]{Conjecture}
\newtheorem{lem}[equation]{Lemma}
\theoremstyle{definition}
\newtheorem{ex}[equation]{Example}
\newtheorem{defn}[equation]{Definition}
\newtheorem{ob}[equation]{Observation}
\newtheorem{rem}[equation]{Remark}

\hyphenation{homeo-morphism} 

\newcommand{\calA}{\mathcal{A}} 
\newcommand{\calD}{\mathcal{D}} 
\newcommand{\calE}{\mathcal{E}}
\newcommand{\calC}{\mathcal{C}} 
\newcommand{\Set}{\mathcal{S}et\,} 
\newcommand{\Top}{\mathcal{T}\!op \,}
\newcommand{\Topst}{\mathcal{T}\!op\, ^*}
\newcommand{\calK}{\mathcal{K}} 
\newcommand{\calO}{\mathcal{O}} 
\newcommand{\calS}{\mathcal{S}} 
\newcommand{\calT}{\mathcal{T}} 
\newcommand{\Z}{{\mathbb Z}}
\newcommand{\C}{{\mathbb C}}
\newcommand{\CC}{{\mathbb C}}
\newcommand{\Q}{{\mathbb Q}}
\newcommand{\R}{{\mathbb R}}
\newcommand{\N}{{\mathbb N}}
\newcommand{\F}{{\mathcal F}}
\newcommand{\PP}{{\mathbb P}}
\newcommand{\RR}{{\mathbb R}}
\def\op{\operatorname}
\hfill

\title[]
{Self-products of rationally elliptic spaces \\
and inequalities between the ranks of \\
homotopy and homology groups}

\author{Anatoly Libgober and Shoji Yokura}

\thanks{2010 MSC: 32S35, 55P62, 55Q40, 55N99.\\
Keywords: mixed Hodge structures, mixed Hodge polynomials, Hilali conjecture, rational homotopy theory.
\\
}

\date{}

\address{Department of Mathematics, University of Illinois, Chicago, IL 60607}

\email{libgober@math.uic.edu}

\address{
Graduate School of Science and Engineering, Kagoshima University, 1-21-35 Korimoto, Kagoshima, 890-0065, Japan
}

\email{yokura@sci.kagoshima-u.ac.jp}

\maketitle 


\begin{center}
\emph{ Dedicated to the memory of Stefan Papadima}
\end{center}

\begin{abstract}
We give a survey on recent results on inequalities between 
the ranks of homotopy and cohomology groups (resp., graded components of mixed Hodge structures on these groups)
of rationally elliptic spaces (resp., quasi-projective varieties which are
rationally elliptic). We also discuss a refinement of these results describing  
a new invariant of rationally elliptic spaces 
allowing to compare the ranks of homotopy and homology groups.
This invariant is a specialization of an invariant $r\left ( P(t),Q(t);\varepsilon \right )$ of a pair $\left (P(t),Q(t) \right )$ 
of polynomials with non-negative integer coefficients, describing 
the range of variable $r$ 
such that $rP(t)<Q(t)^r$ for all $t \ge \varepsilon$. This range is
related to the classical Lambert W-function $W(z)$. 
\end{abstract}
\section{Introduction}

A rationally elliptic space is a simply connected 
topological space $X$ such that 

\noindent
$\dim \left(\pi_*(X) \otimes \Q \right) < \infty$ and $\dim  H_*(X;\Q) < \infty$, where we set
$$\pi_*(X)\otimes \mathbb Q:= \displaystyle \bigoplus_{i\ge 2} \pi_i(X) \otimes \mathbb Q, \quad  H_*(X;\mathbb Q):= \displaystyle \bigoplus_{i\ge 0} H_i(X;\mathbb Q).$$
This interesting class of spaces has received considerable
attention, but a complete picture of structure, geometry or
invariants of spaces in this class appears to be far from clear.
Very strong restrictions on the ranks of homotopy group
were found a long time ago by J. B. Friedlander and S. Halperin (cf. \cite{FH} and also
\cite{FHT} or \cite{FOT}). 

From now on we use the following notation:
\begin{itemize}
\item $\pi_{\mathrm{even}}(X)\otimes \mathbb Q:= \displaystyle \bigoplus_{k\ge 1} \pi_{2k}(X) \otimes \mathbb Q$, \quad $\pi_{\mathrm{odd}}(X)\otimes \mathbb Q:= \displaystyle \bigoplus_{k\ge 0} \pi_{2k+1}(X) \otimes \mathbb Q$,
\item $H_{\mathrm{even}}(X;\mathbb Q):= \displaystyle \bigoplus_{k\ge 0} H_{2k}(X;\mathbb Q)$, \quad $H_{\mathrm{odd}}(X;\mathbb Q):= \displaystyle \bigoplus_{k\ge 0} H_{2k+1}(X;\mathbb Q) $,
\item $H^{\mathrm{even}}(X;\mathbb Q):= \displaystyle \bigoplus_{k\ge 0} H^{2k}(X;\mathbb Q)$, \quad $H^{\mathrm{odd}}(X;\mathbb Q):= \displaystyle \bigoplus_{k\ge 0} H^{2k+1}(X;\mathbb Q) $.
\end{itemize}
To recall Friedlander--Halperin's results, let $x_i$ (resp. $y_j$) denote a basis of
$\pi_{\mathrm{odd}}(X)\otimes \Q$ (resp. $\pi_{\mathrm{even}}(X) \otimes \Q$) and let 
$n$
be the formal dimension of the space $X$, i.e., the maximal degree 
$n$ such that 
$H^n(X;\Q) \ne 0.$
Then we have the following:
\begin{enumerate}[label=(\alph*)]
\item \label{a} $\displaystyle \sum_i\op{deg} x_i \le 2n-1, \sum_j \op{deg} y_j \le n$.
\item \label{b} $n= \displaystyle \sum_i  \op{deg} x_i-\sum_j (\op{deg} y_j-1)$.
\item \label{c} $\chi^{\pi}(X):= \displaystyle \dim \left (\pi_{\mathrm{even}}(X)\otimes \Q \right) -\dim
  \left (\pi_{\mathrm{odd}}(X)\otimes \Q \right) \le 0$.
\item \label{d} $0 \le \chi(X)=\dim H^{\mathrm{even}}(X;\Q)- \dim H^{\mathrm{odd}}(X;\Q)$.
\item \label{e} $\chi(X) >0 \Longleftrightarrow \chi^{\pi}(X)=0$.
\end{enumerate}
As for the Betti number, the following are known:
\begin{enumerate}
\item \label{poincare} Betti numbers $b_i =\dim H_i(X;\mathbb Q)$ of $X$ satisfy Poincar\'e duality \cite[\S 38  Poincar\'e Duality]{FHT}.  In particular $b_n=1$ and $b_{n-1}=b_1=0$.
\item Betti numbers satisfy inequalities: $b_m \displaystyle  \le {1 \over 2}{n \choose m}, m \ne 0,n$ (cf.  \cite[Corollary to Theorem 1]{Pa}). This inequality implies 
$\dim H^*(X;\mathbb Q) \le 2^{n-1}+1,$
which is sharper than $\dim H^*(X;\mathbb Q) \le 2^n$ (\cite[Theorem 2.75]{FOT}).
\end{enumerate}

Regarding the dimensions of the rational homotopy groups and the rational homology groups (which are the same as the rational cohomology groups by the universal coefficient theorem), 
in \cite{Hil} (cf. \cite{HM, HM2}) M. R. Hilali made the following conjecture, which is well-known as ``Hilali conjecture", which is still open although it is known that the conjecture holds for many spaces such as elliptic spaces of pure type, H-spaces, nilmanifolds, symplectic and cosymplectic manifolds, coformal spaces with only odd-degree generators, formal spaces and hyperelliptic spaces under certain conditions (e.g., \cite{Hil, BFMM, CatMil}): 
\begin{con}[Hilali conjecture]
\begin{equation*}\label{hilali}
\op{dim} \left (\pi_*(X) \otimes \mathbb Q \right ) \le \op{dim} H_*(X;\mathbb Q).
\end{equation*}
\end{con}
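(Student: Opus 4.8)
The plan is to pass to rational homotopy theory, reduce the statement to an inequality about the numerical invariants of the minimal model, and then isolate the single step that is at present the real obstruction. Let $(\Lambda V,d)$ be the minimal Sullivan model of $X$. Since $X$ is simply connected and rationally elliptic, $V=V^{\mathrm{even}}\oplus V^{\mathrm{odd}}$ is finite dimensional, $\op{dim}(\pi_*(X)\otimes\Q)=\op{dim}V$, and $\op{dim}H_*(X;\Q)=\op{dim}H^*(\Lambda V,d)$. Writing $p=\op{dim}V^{\mathrm{even}}$ and $q=\op{dim}V^{\mathrm{odd}}$, the conjecture becomes $p+q\le\op{dim}H^*(\Lambda V,d)$; along the way we may use that $p\le q$ (this is $\chi^{\pi}(X)\le0$, part (c) above), that $H^*(\Lambda V,d)$ satisfies Poincar\'e duality of some formal dimension $n$, and that $n$ is pinned down by the Friedlander--Halperin relations (a)--(b).

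\emph{Step 1: the pure case.} Suppose first the model is \emph{pure}, i.e.\ $dV^{\mathrm{even}}=0$ and $dV^{\mathrm{odd}}\subseteq\Lambda V^{\mathrm{even}}$. By Halperin's structure theorem one may choose bases $y_1,\dots,y_p$ of $V^{\mathrm{even}}$ and $x_1,\dots,x_q$ of $V^{\mathrm{odd}}$ so that $dx_1,\dots,dx_p$ is a regular sequence in $\Q[y_1,\dots,y_p]$ and $dx_{p+1}=\dots=dx_q=0$, whence
\begin{equation*}
H^*(\Lambda V,d)\cong\bigl(\Q[y_1,\dots,y_p]/(dx_1,\dots,dx_p)\bigr)\otimes\Lambda(x_{p+1},\dots,x_q),
\end{equation*}
of dimension $2^{\,q-p}\bigl(\prod_{i=1}^{p}\op{deg}(dx_i)\bigr)/\bigl(\prod_{j=1}^{p}\op{deg}(y_j)\bigr)$. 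Since each $dx_i$ has word length $\ge2$ in the minimal model while the complete intersection is finite dimensional, this dimension is $\ge p+q=\op{dim}V$ by an elementary, if slightly intricate, estimate on the integers $\op{deg}(dx_i)$ and $\op{deg}(y_j)$; this is the pure-type case of the conjecture mentioned in the introduction (see \cite{Hil,BFMM}), and it is sharp, e.g.\ for $S^2$ and for $S^2\times S^4$.

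\emph{Step 2: reduction of the general case.} For an arbitrary minimal model write $d=d_\sigma+d'$, where the \emph{pure part} $d_\sigma$ ($d_\sigma V^{\mathrm{even}}=0$, $d_\sigma V^{\mathrm{odd}}\subseteq\Lambda V^{\mathrm{even}}$) keeps only the components of $d$ lying in $\Lambda V^{\mathrm{even}}$. One checks that $d_\sigma^2=0$, that $(\Lambda V,d_\sigma)$ is again elliptic and pure with the \emph{same} $V$, and that the word-length filtration on $V^{\mathrm{odd}}$ gives a spectral sequence with first page $H^*(\Lambda V,d_\sigma)$, converging to $H^*(\Lambda V,d)$, with differentials $d_r$. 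Since the total rank drops by $2\op{rank}d_r$ at page $r$,
\begin{equation*}
\op{dim}H^*(\Lambda V,d)=\op{dim}H^*(\Lambda V,d_\sigma)-2\sum_{r}\op{rank}d_r ,
\end{equation*}
and by Step~1 the ``slack'' $\delta:=\op{dim}H^*(\Lambda V,d_\sigma)-\op{dim}V$ is $\ge0$. Thus the conjecture for $X$ is \emph{equivalent} to the inequality $2\sum_{r}\op{rank}d_r\le\delta$: the higher differentials must not destroy more cohomology than the pure model has to spare (when $\delta=0$ they must all vanish). One would try to prove this by induction along a tower of Hirsch extensions $*=\Lambda V_{(0)}\subseteq\Lambda V_{(1)}\subseteq\cdots\subseteq\Lambda V$ adjoining one generator at a time, controlling at each stage the change of $\op{dim}V$ (always $1$) against that of $\op{dim}H^*$ and exploiting Poincar\'e duality of formal dimension $n$ together with the relations (a)--(e).

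\emph{The main obstacle} is exactly the inequality $2\sum_r\op{rank}d_r\le\delta$: there is no known way to bound, uniformly over all rationally elliptic $X$, how much cohomology the non-pure part $d'$ of the differential --- equivalently, the higher differentials $d_r$ --- destroys, relative to $\op{dim}V$. The crude estimates (the filtration has length $\le q$, and each $d_r$ has rank at most half the dimension of its page) are nowhere near enough, and the case $\delta=0$ shows how tight the required bound is. This is where every attack on the general conjecture has stalled; indeed the classes listed in the introduction --- H-spaces, nilmanifolds, (co)symplectic manifolds, formal spaces, coformal spaces with only odd-degree generators, hyperelliptic spaces under extra hypotheses --- are precisely those for which this collapse is either absent or controllable by hand. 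A proof in full generality would seem to demand a comparison of $\pi_*(X)\otimes\Q$ and $H_*(X;\Q)$ finer than that of their total dimensions --- for instance the one encoded by the Poincar\'e-polynomial invariant $r(P(t),Q(t);\varepsilon)$ and its link to the Lambert $W$-function introduced below.
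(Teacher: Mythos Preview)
The statement you are asked to prove is not a theorem in the paper but a \emph{conjecture}: the Hilali conjecture is stated there as open, and no proof is given or claimed. So there is no ``paper's own proof'' to compare your attempt against.

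Your proposal is, correspondingly, not a proof either --- and you say so yourself. The strategy you outline is a standard and reasonable one: the pure case (Step~1) is indeed known, and the passage to the associated pure model $(\Lambda V,d_\sigma)$ together with the word-length (odd) spectral sequence (Step~2) is the natural way to try to leverage it. Your identification of the obstruction is accurate: one must show that the higher differentials $d_r$ do not kill more cohomology than the slack $\delta=\op{dim}H^*(\Lambda V,d_\sigma)-\op{dim}V\ge 0$ allows, i.e.\ that $2\sum_r\op{rank}d_r\le\delta$, and no general argument for this is known. That is exactly the gap, and it is a genuine one --- the conjecture remains open. What you have written is a correct summary of where the problem stands, not a proof; it would be misleading to present it as one.
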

For a simply connected rationally elliptic space $X$ we put
$$P_X(t) := \sum_{k \ge 0} \op{dim} H_k(X;\mathbb Q) t^k =  \sum_{k \ge 0} \op{dim} H^k(X;\mathbb Q) t^k,$$
which is the well-known Poincar\'e polynomial of $X$. Similarly we can define the following polynomial
$$P^{\pi}_X(t) := \sum_{k \ge 2} \op{dim} (\pi_k(X)\otimes \mathbb Q) t^k.$$
So, we call 
\begin{itemize}
\item $P_X(t)$ the \emph{(co)homological Poincar\' e polynomial} of $X$,
\item $P^{\pi}_X(t)$ the \emph{homotopical Poincar\' e polynomial} of $X$.
\end{itemize}
Then the above Hilali conjecture claims the following inequality of these two polynomials at the special value $t=1$:
\begin{equation}\label{hilali-pp}
P^{\pi}_X(1) \le P_X(1).
\end{equation}
\begin{rem} Here we note the following strict inequalities at the other two special values $t=0, -1$:
\begin{itemize}
\item $0=P^{\pi}_X(0) < P_X(0) =1$.
\item When $t=-1$, we have $P^{\pi}_X(-1) =\chi^{\pi}(X)$ and $P_X(-1) =\chi(X)$. Hence we have
\begin{equation}\label{chi-<}
P^{\pi}_X(-1) < P_X(-1),
\end{equation}
which follows from \ref{c}, \ref{d} and \ref{e} above. Of course  \ref{e} is much sharper than (\ref{chi-<}).
\end{itemize}
\end{rem}
Let $X$ be a quasi-projective algebraic variety.  
Both the homotopy and the cohomology groups carry mixed
Hodge structures (cf. \cite{De1}, \cite{De2}, \cite{Mo},
\cite{Hain 1},\cite{Hain 2}, \cite{Nav}), which are functorial for regular maps, and 
 an invariant of which is given by the generating
functions for the dimensions of graded pieces of Hodge and weight
filtrations as follows:
\begin{equation*}\label{homological}
MH_X(t,u,v) :=
\sum_{k,p,q} \dim \Bigl ( Gr_{F^{\bullet}}^{p} Gr^{W_{\bullet}}_{p+q} H^k (X;\mathbb C)  \Bigr) t^{k} u^p v^q, 
\end{equation*}
where $(W_{\bullet}, F^{\bullet})$ is the mixed Hodge structure of the cohomology groups.
\begin{equation*}
MH^{\pi}_X(t,u,v) :=
\sum_{k,p,q} \dim  \Bigl (Gr_{\tilde F^{\bullet}}^{p} Gr^{\tilde W_{\bullet}}_{p+q} ((\pi_k(X)\otimes \mathbb C)^{\vee})\Bigr ) t^ku^p v^q,
\end{equation*}
where $(\widetilde W_{\bullet}, \widetilde F^{\bullet})$ is the mixed Hodge structure of the dual of homotopy groups.
They will be called respectively the \emph{cohomological mixed Hodge polynomial} and the \emph{homotopical mixed Hodge polynomial} of $X$.

If for a simply connected complex algebraic variety $X$ we consider $MH_X(t,u,v)$ and $MH^{\pi}_X(t,u,v)$ for $(u,v)=(1,1)$, then we have
$$P_X(t) = MH_X(t,1,1), \quad P^{\pi}_X(t) = MH^{\pi}_X(t,1,1).$$
Thus the above Hilali conjecture claims 
\begin{equation}\label{hilali-mhs}
MH^{\pi}_X(1,1,1) \leq MH_X(1,1,1).
\end{equation}

Motivated by inequalities (\ref{hilali-pp}) and (\ref{hilali-mhs}), in 
\cite{Yo,Yo2, LY} one considered the problem of comparison of values of homotopical and
homological Poincar\'e polynomials and corresponding mixed Hodge polynomials for 
values other than $t=1$ of the variables. 
Since the Hilali conjecture clearly becomes true for self-products of any rationally elliptic space taken sufficiently many times, we considered  ``integral stabilization threshold" $\frak{pp}(X;1)$, and, more generally, the integral stabilization threshold
$\frak{pp}(X;\varepsilon)$ defined for $\varepsilon>0$ by
\begin{equation*}
\frak{pp}(X;\varepsilon):= \min\{n(\varepsilon) \, \, | \, \, n P^{\pi}_X(t) < P_X(t)^n, \forall n \ge n(\varepsilon), \forall t \ge \varepsilon  \}.
\end{equation*}
Here $\frak{pp}$ stands for ``Poincar\'e polynomial".
In this paper we give a survey on the properties of this invariant
obtained in \cite{LY}. We also introduce and study the properties of a
new invariant, ``the real
stabilization threshold" $\frak{pp}_{\mathbb R} (X;1)$, 
which can be viewed as a refinement of $\frak{pp}(X;\varepsilon)$.
For $\varepsilon >0$, we let 
\begin{equation*}
\frak{pp}_{\mathbb R}(X;\varepsilon):=\inf \{ r(\varepsilon) \in \mathbb R \, \, | \, \, q P^{\pi}_X(t) < P_X(t)^q, \forall q \ge r(\varepsilon) , \forall t \ge \varepsilon  \}.
\end{equation*}
Here we emphasize that
$\frak{pp}(X;\varepsilon )= \lceil \frak{pp}_{\mathbb R}(X;\varepsilon) \rceil$
where $\lceil x \rceil$ is the ceiling function, i.e., $\lceil x \rceil = \min\{n \in \mathbb Z \, | \, x \le n \}$. Note that the usual Gauss symbol $[x]$ is the floor function $\lfloor x \rfloor :=\max\{n \in \mathbb Z \, | \, n \le x\}$.
It turns out that 
the invariant $\frak{pp}_{\mathbb R}(X;\varepsilon)$ 
can be understood as the ``maximum" value of a real analytic function 
(for more details 
see \S 5, where the case of
  $\frak{pp}_{\mathbb R}(X;1)$ is discussed, so for the general case
  $\frak{pp}_{\mathbb R}(X;1)$ 
can be treated with no substantial changes
). More precisely, we
consider the implicit function $r(t)$ defined by the equation
\begin{equation*}
r(t)P^{\pi}_X(t)=\left (P_X(t) \right )^{r(t)} \quad (t \ge \varepsilon).
\end{equation*}
A Bezout theorem for real analytic curves (cf. \cite{Hov}) yields that
the implicit function $r(t)$ has only finitely many local
maxima-minima. This implicit function can be expressed in terms of the 
classical  Lambert $W$-function $W(z)$, which goes back to Euler and
Lambert (e.g., see \cite{Corless-etal} for the history and some
applications) and is defined as the inverse function given of the
transcendental function $z=we^w$, i.e., $z=W(z)e^{W(z)}$.
More specifically:
\begin{equation*}
r(t)= -\frac{1}{\log P_X(t)} W\left(-\frac{\log P_X(t)}{P^{\pi}_X(t)} \right).
\end{equation*}

We would like to thank Wadim Zudilin  for suggesting a possible relation with Lambert $W$-function and Robert Tijdeman for comments.

We note that the above \emph{stabilization thresholds} can be defined for any two polynomials $P(t)$ and $Q(t)$ with non-negative integers, denoted by $\frak{sth}(P(t),Q(t); \varepsilon)$ and $\frak{sth}_{\mathbb R} (P(t),Q(t); \varepsilon)$, as follows:
\begin{equation*}
\frak{sth}(P(t),Q(t);\varepsilon):= \min\{n(\varepsilon) \, \, | \, \, n P(t) < Q(t)^n, \forall n \ge n(\varepsilon), \forall t \ge \varepsilon  \},
\end{equation*}
\begin{equation*}
\frak{sth}_{\mathbb R}(P(t),Q(t);\varepsilon):=\inf \{ r(\varepsilon) \in \mathbb R \, \, | \, \, q P(t) < Q(t) ^q, \forall q \ge r(\varepsilon) , \forall t \ge \varepsilon  \}.
\end{equation*} 
Here $\frak{sth}$ stands for ``stabilization threshold".

We refer to \cite{Corless-etal} for an account of a large range of problems in which the Lambert $W$-function appears. This account does not include the appearance of $W(z)$ in the context of comparison of linear and exponential growth described in this paper. Such a comparison is certainly an 18th century question and it is natural that it is related to the function in the focus of 18th century mathematics.

Expression of stabilization threshold in terms of values of transcendental functions and results of Baker and Gelfond--Schneider suggest the conjecture of transcendence of our thresholds descried in \S 5. Our results suggest that the collection of thresholds of elliptic rational homotopy types has a natural partition into groups of the types having the same threshold. It would be interesting to understand this distribution better: 
\begin{itemize}
\item What can one say about the collection of thresholds of elliptic rational homotopy types? Assuming the Hilali conjecture, this is a subset of the closed interval $(0,3]$.
\item  What can one say about the set of elliptic rational homotopy types having a fixed threshold $\varepsilon$? 
Note that one has 
$$\frak{pp}_{\mathbb R}(X;\varepsilon)\left (\sum_i \op{rank}\pi_i(X)\varepsilon^i \right ) \le \left (\sum_i \op{rank} H_i(X)\varepsilon^i \right)^{\frak{pp}_{\mathbb R}(X;\varepsilon)}.$$ 
This inequality is \emph{independent of the
inequality in the Hilali conjecture} even for $\varepsilon=1$ in the
sense that Hilali inequality does not provide any information about
the thresholds. In the case when the threshold is equal to 1, one has
inequality for all $t\ge 1$ which does not follows from the inequality for
$t=1$. Rather, the above provides
an additional inequality relation between the sums of ranks of homotopy and
homology groups.
\end{itemize}

The organization of the paper is as follows. In \S 2, \S 3  and \S 4 we recall the results of \cite{LY}; in 
\S 2 we recall the Hilali conjecture and an inequality like the Hilali conjecture for Cartesian self-products of spaces for the homotopical and homological Poincar\'e polynomials and also the homotopical and homological mixed Hodge polynomials; in \S 3 we recall the integral stabilization threshold and compute the thresholds of the spheres $S^{2n}, S^{2n+1}$ and the complex projective spaces $\mathbb CP^n$; in \S 4 we compute the homotopical and homological mixed Hodge polynomials of toric manifolds and their stabilization thresholds. In \S 5 we introduce the real stabilization threshold and discuss its properties. 
In \S 6 we compute the real stabilization thresholds of $S^{2n}, S^{2n+1}$ and  $\mathbb CP^n$.

\section{Hilali conjecture on products}

First we point out that the inequality $\leq$ in the Hilali conjecture cannot be replaced by the strict inequality $<$. Indeed, the following are well-known results, which follow from Serre Finiteness Theorem \cite{Se}:
$$\pi_i(S^{2k}) \otimes \Q 
=\begin{cases} 
\Q & \, i=2k, 4k-1,\\
\, 0  & \,  i\not =2k, 4k-1,
\end{cases} \quad 
\pi_i(S^{2k+1}) \otimes \Q 
=\begin{cases} 
 \Q & \,  i=2k+1,\\
\, 0 & \, i \not =2k+1.
\end{cases} 
$$
Hence we have 
$P^{\pi}_{S^{2k+1}}(t) = t^{2k+1} \, \text{ and} \, P_{S^{2k+1}}(t) = t^{2k+1} +1,$
so
$P^{\pi}_{S^{2k+1}}(t) < P_{S^{2k+1}}(t) \, \ \text{for} \, \,   \forall t.$
For $S^{2k}$ we have 
$P^{\pi}_{S^{2k}}(t) = t^{4k-1} + t^{2k} \, \text{and} \, P_{S^{2k}}(t) = t^{2k} +1,$
thus
$P^{\pi}_{S^{2k}}(1) = P_{S^{2k}}(1) =2.$ 
Therefore the inequality $\leq$ 
cannot be replaced by the strict inequality $<$.

\begin{rem}\label{rem-cpn}
We note that only $S^2$ is a complex manifold and is the $1$-dimensional complex projective space $\mathbb CP^1$. For the $n$-dimensional projective space $\mathbb CP^n$, it follows from the fibration $S^1 \hookrightarrow S^{2n+1} \to \mathbb CP^n$ that the homotopy groups of $\mathbb CP^n$ are
$$\pi_k(\mathbb CP^n) \otimes \Q 
=\begin{cases} 
0 & \, k \not =2,2n+1,\\
\Q & \, k=2, 2n+1.
\end{cases}
$$
Hence we have
$$P_{\mathbb CP^n}(t) = 1 + t^2+ t^4 + \cdots + t^{2n}, \,\, P_{\mathbb CP^n}^{\pi}(t) = t^2 + t^{2n+1}.$$
\end{rem}
The isomorphisms $\pi_i(X\times Y)=\pi_i(X) \oplus \pi_i(Y)$ and the
K\"unneth formula $H_n(X \times Y, \mathbb Q)= \sum_{i+j=n} H_i(X; \mathbb Q) \otimes H_j(Y; \mathbb Q)$ imply that the homotopical Poincar\'e polynomial $P^{\pi}_X(t)$ and the cohomological Poincar\'e polynomial $P_X(t)$ are respectively additive and multiplicative, i.e., 
$$P^{\pi}_{X \times Y} (t) =P^{\pi}_X(t) + P^{\pi}_Y(t) \quad \text{and} \quad  P_{X \times Y} (t) =P_X(t) \times  P_Y(t).$$
Using these additivity and multiplicativity and some elementary calculus, in \cite{Yo} we show that there exists a positive integer $n_0$
such that for all $n>n_0$ one has 
$$P^{\pi}_{X^n}(1)<P_{X^n}(1)$$
where $X^n= \underbrace{X \times \cdots \times X}_n$ is the Cartesian product of $n$ copies of $X$.
Since $P_X(t)$ and $P_X^{\pi}(t)$ have non-negative coefficients, by the same argument we can show that for any non-negative real number $r$ there exists a positive integer $n_0(r)$
such that for all $n>n_0(r)$ one has $P^{\pi}_{X^n}(r)<P_{X^n}(r)$. Clearly the integer $n_0(r)$ does depend on the choice of the real number. However in \cite{LY} we show the following result (announced in \cite{Yo2}):
\begin{thm}\label{semi-global} Let $X$ be a
simply connected rationally elliptic space. For any positive real number
  $\varepsilon$ there exists
 a positive integer $n(\varepsilon)$ such that for $\forall n \ge
 n(\varepsilon)$ and $\forall t \ge \varepsilon$
\begin{equation}\label{var}
P^{\pi}_{X^n}(t) < P_{X^n}(t). 
\end{equation}
\end{thm}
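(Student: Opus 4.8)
The plan is to reduce the statement to an estimate for a single pair of polynomials and then to upgrade the obvious pointwise inequality to one that is uniform on the ray $[\varepsilon,\infty)$. By the additivity of $P^{\pi}_{\bullet}$ and the multiplicativity of $P_{\bullet}$ under Cartesian products recalled above, $P^{\pi}_{X^n}(t)<P_{X^n}(t)$ is equivalent to
\begin{equation*}
n\,P^{\pi}_X(t)<P_X(t)^{\,n},
\end{equation*}
so, writing $P(t):=P_X(t)$ and $Q(t):=P^{\pi}_X(t)$, it suffices to produce $n(\varepsilon)$ with $nQ(t)<P(t)^{\,n}$ for all $t\ge\varepsilon$ and all $n\ge n(\varepsilon)$. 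If $X$ is rationally trivial then $Q\equiv0$ and the inequality is clear, so assume $X$ has positive formal dimension $m$; then $P$ has non-negative integer coefficients with $P(0)=\dim H_0(X;\mathbb Q)=1$ and $P$ is not the constant polynomial $1$, while $Q$ has non-negative coefficients and $Q\not\equiv0$.

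Since $P$ has non-negative coefficients it is non-decreasing on $[0,\infty)$, and since $P(0)=1$ and $P$ is non-constant we have $P(\varepsilon)>1$; set
\begin{equation*}
c:=P(\varepsilon)>1,\qquad\text{so that}\qquad P(t)\ge c\ \ \text{for all }t\ge\varepsilon.
\end{equation*}
Next I would dominate $Q$ by a power of $P$ on the whole ray $[\varepsilon,\infty)$. Pick an integer $k\ge1$ with $km\ge\deg Q$ (say $k=\lceil\deg Q/m\rceil$). Then $P(t)^{k}/Q(t)$ is a continuous, strictly positive function on $[\varepsilon,\infty)$, and as $t\to\infty$ it tends either to a positive limit or to $+\infty$, because $P$ has positive leading coefficient and $\deg P^{k}=km\ge\deg Q$. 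A continuous, strictly positive function on $[\varepsilon,\infty)$ whose limit at $\infty$ is positive (or $+\infty$) is bounded below by a positive constant there, so there is some $A\ge1$ with
\begin{equation*}
Q(t)\le A\,P(t)^{k}\qquad(t\ge\varepsilon).
\end{equation*}

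Combining the two bounds, for $n>k$ and $t\ge\varepsilon$ we get
\begin{equation*}
\frac{n\,Q(t)}{P(t)^{\,n}}\le\frac{nA\,P(t)^{k}}{P(t)^{\,n}}=\frac{nA}{P(t)^{\,n-k}}\le\frac{nA}{c^{\,n-k}}=Ac^{k}\,\frac{n}{c^{\,n}},
\end{equation*}
and the right-hand side tends to $0$ as $n\to\infty$ because $c>1$; in particular it is $<1$ once $n$ exceeds some threshold $n(\varepsilon)$, and for such $n$ one has $nQ(t)<P(t)^{\,n}$ for every $t\ge\varepsilon$, which proves the theorem. The only genuine point is the passage from ``for fixed $t$, the inequality holds for $n$ large depending on $t$'' to a bound uniform over the unbounded interval $t\ge\varepsilon$; the mechanism that makes this possible is the uniform lower bound $P(t)\ge c>1$ on $[\varepsilon,\infty)$ together with the polynomial domination $Q\le A P^{k}$ (needed because $\deg Q$ may exceed $\deg P$, as already happens for $S^{2k}$), which jointly force $nQ(t)/P(t)^{\,n}$ to decay geometrically in $n$ at a rate independent of $t$. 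I expect this uniformity step to be the main obstacle; the remaining ingredients are only bookkeeping with degrees and signs of coefficients.
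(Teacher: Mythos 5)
Your argument is correct, and its first step is exactly the paper's: additivity of $P^{\pi}$ and multiplicativity of $P$ under products convert (\ref{var}) into the polynomial inequality $n\,P^{\pi}_X(t)<P_X(t)^n$ uniformly on $t\ge\varepsilon$, and your treatment of the degenerate case $P_X\equiv 1$ matches Remark \ref{remark-0}. Where you diverge is that the paper stops there, deducing Theorem \ref{semi-global} from Lemma \ref{general thm}, whose proof it does not reproduce but cites from \cite{LY}; you instead supply a self-contained proof of that lemma. Your mechanism — the uniform lower bound $P_X(t)\ge P_X(\varepsilon)=c>1$ on $[\varepsilon,\infty)$ together with a global domination $P^{\pi}_X(t)\le A\,P_X(t)^k$ (with $k$ chosen so that $k\cdot\deg P_X\ge\deg P^{\pi}_X$, which is needed since, as for $S^{2k}$, the homotopical degree can exceed the formal dimension), giving $n P^{\pi}_X(t)/P_X(t)^n\le Ac^k\,n/c^n\to 0$ at a rate independent of $t$ — is exactly the kind of two-regime estimate (large $t$ versus a compact interval) that the cited proof in \cite{LY} relies on, and it correctly identifies and resolves the only real difficulty, namely uniformity over the unbounded ray. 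The one cosmetic caveat is that your notation swaps the roles of $P$ and $Q$ relative to Lemma \ref{general thm}, and the hypothesis $Q\not\equiv 0$ is not actually needed beyond guaranteeing positivity of $P^{\pi}_X$ on $t>0$ in the domination step; neither affects correctness.
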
 
Theorem \ref{semi-global} is an immediate
consequence of the following \cite{LY}:
\begin{lem}\label{general thm} Let $\varepsilon$ be a positive real number. Let $P(t)$ and $Q(t)$ be two polynomials of the following types:
$$P(t) = \sum_{k=2}^p a_kt^k, \,  a_k \ge 0, \quad \quad Q(t) = 1 + \sum_{k=2}^q b_kt^k, \quad b_k \ge 0, \, b_q \not = 0 .$$
(For our purpose it is sufficient to consider $b_q=1$,
  but we do not assume it.) 
Then there exists a positive integer 
$n(\varepsilon)$ such that for $\forall n \ge n(\varepsilon)$
\begin{equation}\label{keyineq}
n P(t) < Q(t)^n \, \, (\forall t \ge \varepsilon).
\end{equation}
\end{lem}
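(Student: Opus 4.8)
The plan is to compare the growth rates of the two sides of (\ref{keyineq}) as $n\to\infty$, treating $t$ as a parameter, and to show that the estimates can be made uniform in $t$ on the interval $[\varepsilon,\infty)$. The left side $nP(t)$ grows linearly in $n$, while the right side $Q(t)^n$ grows exponentially in $n$ \emph{provided} $Q(t)>1$; so the heart of the matter is to separate the region where $Q(t)$ is bounded away from $1$ from the region near $t=\varepsilon$ where $Q(t)$ may be close to $1$ (note $Q(0)=1$, so if $\varepsilon$ is small, $Q(t)-1$ can be arbitrarily small). First I would observe that since $Q$ has non-negative coefficients and $b_q\neq 0$, $Q$ is strictly increasing on $[0,\infty)$ and $Q(t)>1$ for all $t>0$; hence on $[\varepsilon,\infty)$ we have $Q(t)\ge Q(\varepsilon)=:c>1$, a bound uniform in $t$ but depending on $\varepsilon$.

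Next I would split into two cases according to a threshold $T>\varepsilon$ to be chosen. On the compact interval $[\varepsilon,T]$: here $P(t)\le P(T)=:A$ and $Q(t)\ge c>1$, so it suffices to pick $n$ large enough that $nA<c^{\,n}$, which holds for all $n\ge n_1$ for some $n_1=n_1(\varepsilon,T)$ since exponential beats linear. On the unbounded interval $[T,\infty)$: here I would use that $\deg Q\ge 2>0$ so $Q(t)\to\infty$, and compare degrees directly. Write $d=\deg P\le p$. For $t$ large, $P(t)\le 2a_d t^d$ (say) while $Q(t)^n\ge (b_q t^q)^n\ge b_q^{\,n}t^{qn}$; since already for $n=1$ we can arrange $t^{q}\ge 2$ on $[T,\infty)$ by choosing $T\ge 2^{1/q}$, and since $q\ge 2$, for any fixed $n\ge 2$ we get $qn>d$ once... actually more cleanly: fix any $n_2\ge 2$; then $Q(t)^{n_2}$ is a polynomial of degree $qn_2\ge 2n_2\ge d$ for $n_2\ge d/2$, with leading coefficient $b_q^{n_2}>0$, while $n_2 P(t)$ has degree $d$; hence $Q(t)^{n_2}-n_2P(t)\to+\infty$ as $t\to\infty$, so it is positive on $[T_2,\infty)$ for some $T_2$. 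Choosing $T=\max(T_2,\varepsilon+1)$ and then $n(\varepsilon)=\max(n_1,n_2)$ closes both cases, using that once $nP(t)<Q(t)^n$ holds for one $n$ with $Q(t)>1$ it persists for all larger $n$ (as $Q(t)^{n+1}=Q(t)\cdot Q(t)^n\ge Q(t)^n+ (Q(t)-1)Q(t)^n$, and one checks the linear term increments by $P(t)$ which is eventually dominated).

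The step I expect to be the main obstacle is making the monotonicity-in-$n$ argument genuinely uniform: knowing $nP(t)<Q(t)^n$ for a \emph{single} $n$ depending on $t$ is not enough — one needs a single $n(\varepsilon)$ that works simultaneously for all $t\ge\varepsilon$ and all larger $n$. The clean way around this is to prove the sharper uniform statement that there is a constant $\lambda=\lambda(\varepsilon)>1$ with $Q(t)^n \ge \lambda^{n} \cdot n\,P(t)$ for all $t\ge\varepsilon$ and all $n\ge n(\varepsilon)$; on $[\varepsilon,T]$ this follows from $Q(t)\ge c$ together with boundedness of $P$, and on $[T,\infty)$ from the degree comparison after factoring out a fixed power of $Q$. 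Once the inequality carries a uniform multiplicative slack $\lambda^n$, monotonicity in $n$ is automatic. The rest is elementary calculus of the "exponential beats polynomial" type, applied on a compact interval and then at infinity, and I would not belabor those estimates.
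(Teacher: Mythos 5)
Your proposal is correct and follows essentially the same route as the proof the paper relies on (the paper itself defers to \cite{LY}, but the remark in \S 5 about obtaining $s_0$ and $N_0$ from the proof of \cite[Lemma 2.1]{LY} shows it is the same compactness-plus-tail scheme): on $[\varepsilon,T]$ use $Q(t)\ge Q(\varepsilon)=c>1$ and $P(t)\le P(T)$ so that $c^n$ beats $nP(T)$ uniformly for all large $n$, and on $[T,\infty)$ a degree comparison for one fixed exponent, upgraded to all larger exponents using that $Q$ stays bounded away from $1$. The only point to tighten is your literal claim that the inequality ``persists for all larger $n$'' once it holds for one $n$ with $Q(t)>1$ — the induction step $n\to n+1$ needs $Q(t)\ge (n+1)/n$, i.e.\ $n\ge 1/(Q(\varepsilon)-1)$ (and $qn>\deg P$ rather than $\ge$ in the leading-term comparison) — but your own reformulation with the uniform multiplicative slack $\lambda^n$ supplies exactly this, so the argument closes.
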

\begin{rem}\label{remark-0} 
\begin{enumerate}
\item Note that, since $X$ is simply connected, $P_X(t)=1$
implies that $X$ is rationally homotopy equivalent to a point
(cf. \cite[Theorem 8.6]{FHT}), and
hence $P_X^{\pi}=0$. In particular, the inequality (\ref{var}) is satisfied 
with $n(\varepsilon)=1,  \forall \varepsilon>0$.  Therefore, in Theorem
\ref{semi-global}  we assume that $P_X(t)>1$. From now on we assume this condition.
\item In the above theorem we cannot let $\varepsilon=0$, in which case there \emph{does not} exist such an integer 
$n(\varepsilon)$, because one may have $\lim_{\varepsilon \to 0}n(\varepsilon)=\infty$.
\end{enumerate}
\end{rem}
In fact the cohomological mixed Hodge polynomial is also multiplicative just like the (cohomological) Poincar\'e polynomial $P_X(t)$:
\begin{equation*}\label{mh-multi}
MH_{X \times Y}(t,u,v) =MH_X(t,u,v)  \times MH_Y(t,u,v),
\end{equation*}
which follows from the fact that the mixed Hodge structure is compatible with the tensor product (e.g., see \cite{PS}.)
On the other hand the homotopical mixed Hodge polynomial is additive just like the homotopical Poincar\'e polynomial $P^{\pi}_X(t)$

\begin{equation*}\label{mh-pi-additive}
MH^{\pi}_{X \times Y}(t,u,v) = MH^{\pi}_X(t,u,v) + MH^{\pi}_Y(t,u,v)
\end{equation*}
since $\pi_{*}(X \times Y) = \pi_{*}(X) \oplus \pi_{*}(Y)$ and the category of mixed Hodge structures is abelian and the direct sum of a mixed Hodge structure is also a mixed Hodge structure. 
In this paper the following special multiplicativity and additivity are sufficient:
\begin{equation*}\label{mh-pi-multi}
MH_{X^n}(t,u,v) = \{MH_X(t,u,v)\}^n,
\end{equation*}
\begin{equation*}\label{mh-pi-additive}
MH^{\pi}_{X^n}(t,u,v) = nMH^{\pi}_X(t,u,v). \quad
\end{equation*}

A ``mixed Hodge polynomial" version of 
Theorem \ref{semi-global} for algebraic varieties
is the following \cite{LY,Yo2}):
\begin{thm}\label{mhp}
 Let $0< \varepsilon \ll 1$  and $r>0$ be positive real numbers and let $\mathscr C_{\varepsilon,r}:=[\varepsilon, r] \times [\varepsilon,r] \times [\varepsilon,r] \subset (\mathbb R_{\ge 0})^3$ be the cube of size $r - \varepsilon$.
Then there exists a positive integer $n_r$ such that for 
$\forall n \ge n_r$
the following strict inequality holds
$$MH^{\pi}_{X^n}(t,u,v) <  MH_{X^n}(t,u,v)$$
for $\forall (t,u,v) \in \mathscr C_{\varepsilon,r}.$ 
\end{thm}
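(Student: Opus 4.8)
The plan is to reduce the statement, via the multiplicativity of $MH_X$ and the additivity of $MH^{\pi}_X$ recorded just before the theorem, to an elementary comparison of a bounded function with an exponentially growing one on the compact cube $\mathscr C_{\varepsilon,r}$ --- in the spirit of Lemma \ref{general thm}, but genuinely simpler because here the domain is compact. Using $MH_{X^n}(t,u,v)=\{MH_X(t,u,v)\}^n$ and $MH^{\pi}_{X^n}(t,u,v)=n\,MH^{\pi}_X(t,u,v)$, the desired inequality $MH^{\pi}_{X^n}<MH_{X^n}$ on $\mathscr C_{\varepsilon,r}$ becomes precisely
$$n\,MH^{\pi}_X(t,u,v) < \bigl(MH_X(t,u,v)\bigr)^n, \qquad (t,u,v)\in \mathscr C_{\varepsilon,r}.$$
If $X$ is rationally trivial then $MH^{\pi}_X\equiv 0$ and the inequality holds with $n_r=1$; so, exactly as in Remark \ref{remark-0}, I assume $X$ is not rationally trivial.

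Next I would record the two structural facts about these polynomials that make the argument run. First, $MH_X(t,u,v)\in \mathbb Z_{\ge 0}[t,u,v]$: its coefficients are dimensions of graded pieces, hence non-negative; its constant term (the coefficient of $t^0u^0v^0$) equals $\dim Gr_{F^{\bullet}}^{0} Gr^{W_{\bullet}}_{0} H^0(X;\mathbb C)=1$ since $X$ is connected and $H^0$ is pure of type $(0,0)$; and since $X$ is not rationally trivial there is at least one further monomial with positive coefficient, coming from some $H^k(X;\mathbb C)\ne 0$ with $k\ge 1$. Second, $MH^{\pi}_X(t,u,v)$ has non-negative coefficients and no constant term (it is divisible by $t^2$, as $\pi_k(X)=0$ for $k<2$), so in particular it is a continuous function on the cube.

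With these in hand the proof is a two-line compactness argument. Put
$$M := \max_{\mathscr C_{\varepsilon,r}} MH^{\pi}_X < \infty, \qquad 1+\delta := \min_{\mathscr C_{\varepsilon,r}} MH_X,$$
both extrema being attained by continuity on the compact set $\mathscr C_{\varepsilon,r}$. Since every monomial of $MH_X$ other than the constant term $1$ has positive total degree, and all coordinates on the cube satisfy $t,u,v\ge \varepsilon>0$, we get $MH_X>1$ throughout $\mathscr C_{\varepsilon,r}$, i.e. $\delta>0$. Now on $\mathscr C_{\varepsilon,r}$ one has $\bigl(MH_X(t,u,v)\bigr)^n\ge (1+\delta)^n$ and $n\,MH^{\pi}_X(t,u,v)\le nM$; since $(1+\delta)^n$ grows exponentially while $nM$ grows at most linearly, there is an integer $n_r$, depending only on $\delta$ and $M$ and hence only on $\varepsilon$ and $r$, with $nM<(1+\delta)^n$ for all $n\ge n_r$. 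For such $n$ and all $(t,u,v)\in\mathscr C_{\varepsilon,r}$,
$$n\,MH^{\pi}_X(t,u,v) \le nM < (1+\delta)^n \le \bigl(MH_X(t,u,v)\bigr)^n,$$
which is the assertion.

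The only place where the algebro-geometric input enters is the multiplicativity of $MH_X$ and the additivity of $MH^{\pi}_X$, which themselves rest on the K\"unneth formula together with compatibility of mixed Hodge structures with tensor products, and on $\pi_*(X\times Y)=\pi_*(X)\oplus\pi_*(Y)$ together with the abelian-ness of the category of mixed Hodge structures --- all recalled before the theorem. I do not expect a real obstacle here: the single point requiring care is that the strict inequality must hold \emph{uniformly} over the cube, and this is exactly what the compactness-produced constants $M<\infty$ and $\delta>0$ provide. I note in passing that the same bookkeeping as in Lemma \ref{general thm} would upgrade the conclusion from the compact cube $\mathscr C_{\varepsilon,r}$ to the unbounded region $\{t,u,v\ge\varepsilon\}$; the present statement only requires the compact case, for which bare compactness already suffices.
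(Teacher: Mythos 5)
Your proof is correct. The survey itself does not reproduce an argument for Theorem \ref{mhp} (it is quoted from \cite{LY,Yo2}), so the comparison can only be with the spirit of the paper's treatment of Theorem \ref{semi-global} via Lemma \ref{general thm}; your route is the natural compact-domain version of that: reduce by multiplicativity of $MH$ and additivity of $MH^{\pi}$ to $n\,MH^{\pi}_X<\bigl(MH_X\bigr)^n$ on $\mathscr C_{\varepsilon,r}$, then beat the linear bound $nM$ with the exponential bound $(1+\delta)^n$, where $M=\max_{\mathscr C_{\varepsilon,r}}MH^{\pi}_X$ and $1+\delta=\min_{\mathscr C_{\varepsilon,r}}MH_X$. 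The one point that genuinely needs checking, $\delta>0$, you handle correctly: the constant term of $MH_X$ is $1$ (purity of $H^0$), and unless $X$ is rationally trivial (a case you dispose of with $n_r=1$, as in Remark \ref{remark-0}) there is a further monomial with positive coefficient, which is bounded below on the cube because $t,u,v\ge\varepsilon>0$; this is precisely where the hypothesis $\varepsilon>0$ enters, and it is what makes the compact case simpler than Lemma \ref{general thm}, which must also control behavior as $t\to\infty$. One caveat on your closing aside: the claim that ``the same bookkeeping as in Lemma \ref{general thm} would upgrade the conclusion to the unbounded region $\{t,u,v\ge\varepsilon\}$'' is an overstatement. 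The paper's \S 5 shows that the passage to $\mathscr C_{\varepsilon,\infty}$ is not automatic in several variables: it requires the substitution hypotheses of Proposition \ref{pro-real-mhp}, and the example with $P_1(t,u,v)=t^2uv+t^{2n+3}(uv)^{n+1}$ exhibits a polynomial of the same general shape for which no such uniform threshold exists over $\mathscr C_{1,\infty}$. Since that remark lies outside the statement you were asked to prove, it does not affect the validity of your argument for the theorem itself.
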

\section{Integral stabilization threshold $\frak{pp}(X;\varepsilon)$}

Theorem \ref{semi-global} suggests the following invariant of a rationally elliptic
homotopy type:
\begin{defn}\label{threshold} 
\begin{enumerate}
\item \emph{The integral stabilization 
threshold} or simply \emph{the integral threshold}, denoted by $\frak{pp}(X; \varepsilon)$, is the smallest integer
  $n(\varepsilon)$
such that inequality (\ref{var}) takes place for $\forall n \ge n(\varepsilon)$.  
\item The smallest integer
  $n(\varepsilon)$
such that inequality (\ref{keyineq}) takes place for $\forall n \ge n(\varepsilon)$ is denoted by $\frak{sth}(P(t),Q(t);\varepsilon)$.
\end{enumerate}
\end{defn}
With the Hilali conjecture in mind, we consider $\varepsilon =1$, thus we consider $\frak{pp}(X; 1)$.
\begin{ex} $\frak{pp}(S^{2n+1};1)=1$. Because $P^{\pi}_{S^{2k+1}}(t) = t^{2k+1}$, $P_{S^{2k+1}}(t) = t^{2k+1} +1$ and we have $n(t^{2k+1}) < (t^{2k+1} +1)^n$ for $\forall n \ge 1$ and for $\forall t \ge 0$.
\end{ex}
\begin{ex}\label{s-2n} $\frak{pp}(S^{2n};1)=3$.
Recall that $P_{S^{2n}}(t) = 1 + t^{2n}$ and $P_{S^{2n}}^{\pi}(t) = t^{2n} + t^{4n-1}$.
It is easy to see that if $t \geq 1$, 
$2(t^{2n}+t^{4n-1}) < (1+t^{2n})^2$ has the only exception for $t=1$.
But we see that 
$3(t^{2n}+t^{4n-1}) < (1+t^{2n})^3$ for $\forall t \geq 1$, then by induction on the power $m$, we have  
$m(t^{2n}+t^{4n-1}) < (1+t^{2n})^m$ for $\forall m \geq 3$.
Therefore we have $\frak{pp}(S^{2n};1)=3.$ 
\end{ex}
\begin{ex}\label{ex-cpn}
$$\frak{pp}(\mathbb CP^n;1)= \begin{cases} 3, & n=1, \\
2, & n \ge 2.
\end{cases}$$
As in Remark \ref{rem-cpn}, we have
$P_{\mathbb CP^n}(t)= 1+ t^2 + \cdots +t^{2n}, \, P_{\mathbb CP^n}^{\pi}(t)= t^2 + t^{2n+1}.$
\begin{enumerate}
\item $\frak{pp}(\mathbb CP^1;1)=3$: This follows from 
Example \ref{s-2n} since $\mathbb CP^1=S^2$.
\item $\frak{pp}(\mathbb CP^n;1)=2$ for $n \ge 2$:
Clearly, if $t \geq 1$, $t^2+t^{2n+1} \not < 1+t^2+\cdots +t^{2n}$, but we do have 
$2(t^2+t^{2n+1}) <(1+t^2+\cdots +t^{2n})^2.$
Indeed,
\begin{align*}
(1+t^2+\cdots +t^{2n})^2- 2(t^2+t^{2n+1}) &
\geq \left \{(1+t^2)+ t^{2n} \right \}^2- 2(t^2+t^{2n+1})\\
& > 2t^{2n}(1+t^2) - 2(t^2+t^{2n+1}) \\
& = 2t^{2n+1}(t-1) +2t^2(t^{2n-2}-1) \\
& \geq 0 \quad \text{(since $t \geq 1$ and $2n-2\geq 2$)}
\end{align*}

Then, as in Example \ref{s-2n}, by induction on the power $m$, 
for $\forall m \geqq 2$ we have 
$m(t^2+t^{2n+1}) <(1+t^2+\cdots +t^{2n})^m.$
Therefore we have $\frak{pp}(\mathbb CP^n;1)=2.$ 
\end{enumerate}
\end{ex}
Thus, for $X = S^{2n+1}, \, S^{2n}, \, \mathbb CP^n$, we have $\frak{pp}(X;1) \le 3$.
It turns out that this inequality always holds \emph{provided} the Hilali conjecture holds, i.e., $P^{\pi}_X(1) \le P_X(1)$ holds:
\begin{thm}[\cite{LY}] If the Hilali conjecture is correct, then for any simply connected rationally elliptic space $X$ we have
$\frak{pp}(X;1) \le 3.$
\end{thm}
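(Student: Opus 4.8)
The plan is to convert the statement into a single polynomial inequality, reduce that inequality to the case $m=3$, and then settle it by a short case analysis governed by the Friedlander--Halperin restrictions.

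First I would use the product formulas $P_{X^{n}}=(P_X)^{n}$ and $P^{\pi}_{X^{n}}=nP^{\pi}_X$ to see that $\frak{pp}(X;1)\le 3$ is precisely the assertion $m\,P^{\pi}_X(t)<(P_X(t))^{m}$ for all integers $m\ge 3$ and all $t\ge 1$, and then reduce this to $m=3$: by Remark \ref{remark-0}(1) we may assume $P_X(t)>1$, hence (non-negative integer coefficients, constant term $1$) $P_X(t)\ge 2$ for $t\ge 1$; granting $3P^{\pi}_X(t)<(P_X(t))^{3}$, an induction on $m$ works since $P^{\pi}_X(t)\le mP^{\pi}_X(t)<(P_X(t))^{m}$ gives $(m+1)P^{\pi}_X(t)<2(P_X(t))^{m}\le(P_X(t))^{m+1}$. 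At $t=1$ the target is transparent: with $a=P_X(1)\ge 2$, the Hilali conjecture gives $P^{\pi}_X(1)\le a$, so $3P^{\pi}_X(1)\le 3a<a^{3}$ because $a^{3}-3a=a(a^{2}-3)\ge 2>0$. All the content is in upgrading this to every $t\ge 1$.

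For the uniform inequality I would feed in three structural facts: Poincar\'e duality gives $P_X(t)\ge 1+t^{n}$, where $n=\deg P_X$ is the formal dimension, and (monotonicity of $P_X$ on $[0,\infty)$) $P_X(t)\ge a$, so $(P_X(t))^{3}\ge a(1+t^{n})^{2}$ and $(P_X(t))^{3}\ge(1+t^{n})^{3}$ for $t\ge 1$; Friedlander--Halperin \ref{a} gives $\sum_i\op{deg}x_i\le 2n-1$ for the odd homotopy generators and $\sum_j\op{deg}y_j\le n$ for the even ones; and \ref{c} gives $\#\{y_j\}\le\#\{x_i\}=:s$. Then I would split. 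If $\deg P^{\pi}_X\le n$, then for $t\ge 1$, $P^{\pi}_X(t)\le P^{\pi}_X(1)\,t^{n}\le a\,t^{n}$ by Hilali, and $3a\,t^{n}<a(1+t^{n})^{2}$ because $1+t^{2n}-t^{n}=1+t^{n}(t^{n}-1)\ge 1>0$. If $\deg P^{\pi}_X>n$, the top degree comes from an odd generator $x_0$ (even ones have degree $\le n$), and by \ref{a} there is exactly one such generator of degree $>n$; all remaining generators have degree $\le n$. If in addition $s=1$, then by \ref{a} and \ref{c} one gets $P^{\pi}_X(t)\le t^{2n-1}+t^{n}$, and the identity
$$(1+t^{n})^{3}-3\bigl(t^{2n-1}+t^{n}\bigr)=1+t^{3n}+3t^{2n-1}(t-1)>0$$
finishes it. If $s\ge 2$, then (a rationally elliptic space with $\dim H^{*}\le 3$ is rationally a sphere or a projective plane, for which $s=1$, by Poincar\'e duality and the ring structure) we have $a\ge 4$ and $n\ge 5$; \ref{a} forces $\op{deg}x_0\le 2n-4$, while the $\le a-1$ remaining generators have degree $\le n$, so $P^{\pi}_X(t)\le t^{2n-4}+(a-1)t^{n}$, and
$$a(1+t^{n})^{2}-3\bigl(t^{2n-4}+(a-1)t^{n}\bigr)=a+t^{n}\bigl[t^{n-4}(at^{4}-3)-(a-3)\bigr]\ge a>0.$$

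The hard part will be this last sub-case, $\deg P^{\pi}_X>n$ with $s\ge 2$: the naive bound $P^{\pi}_X(t)\le P^{\pi}_X(1)t^{2n-1}$ is far too lossy (it already fails near $t=2$ once $n\ge 3$), so one must genuinely use the Friedlander--Halperin inequalities to prevent $P^{\pi}_X$ from simultaneously carrying a term of degree close to $2n-1$ and having a large value at $t=1$, together with the classification of rational homotopy types with $\dim H^{*}\le 3$ to secure $\dim H^{*}(X;\Q)\ge 4$ in that regime. An alternative I would keep in reserve is the analytic viewpoint of \S 5: the implicit function $r(t)$ defined by $r(t)P^{\pi}_X(t)=(P_X(t))^{r(t)}$ has, by a B\'ezout theorem for real analytic curves, only finitely many local maxima, and it would then suffice to bound each of them by $3$ under the Hilali hypothesis.
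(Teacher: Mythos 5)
The survey you are reading states this theorem with only a citation to \cite{LY} and contains no proof of it, so there is no argument in the paper itself to compare yours against; judged on its own merits, your proposal is essentially a complete and correct proof. The reduction to the single inequality $3P^{\pi}_X(t)<P_X(t)^3$ for $t\ge 1$ (using $P_X(t)\ge 2$ for $t \geq 1$ and induction on $m$) is fine, and the three regimes are handled by correct computations: $a(1+t^n)^2-3at^n=a\bigl(1+t^n(t^n-1)\bigr)>0$, the identity $(1+t^n)^3-3(t^{2n-1}+t^n)=1+t^{3n}+3t^{2n-1}(t-1)>0$, and the $s\ge 2$ identity all check out, while the comparisons $P_X(t)^3\ge a(1+t^n)^2$ and $P_X(t)^3 \geq (1+t^n)^3$ are justified by $b_0=b_n=1$ and non-negativity of the coefficients. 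Two points should be made explicit. First, the claim $n\ge 5$ in the last sub-case: it follows from \ref{a} since the exceptional odd generator has degree $\ge n+1$ and a second odd generator has degree $\ge 3$, so $n+4\le 2n-1$; say this. Second, the only genuine soft spot is the parenthetical assertion that $\dim H^*(X;\Q)\le 3$ forces $s=1$: Poincar\'e duality does pin down the ring ($\Q[x]/(x^2)$ or $\Q[x]/(x^3)$), but passing from the cohomology ring to the homotopy generators uses intrinsic formality of one-generator truncated algebras (so that the minimal model is $(\Lambda(x,y),dy=x^{k})$), which you should cite or prove rather than wave at.

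If you prefer to avoid that classification altogether, the Friedlander--Halperin relations plus the Hilali hypothesis already rule out small $a$ inside your sub-case: if $a=2$ and $s\ge 2$, Hilali forces exactly two generators, both odd, and \ref{b} gives $n=\op{deg}x_1+\op{deg}x_2$, so no generator has degree $>n$, contradicting the case hypothesis; if $a=3$, Poincar\'e duality forces the middle class into even degree (its square must be nonzero), so $H^{\mathrm{odd}}=0$, $\chi(X)=3>0$, and \ref{e} gives equally many even and odd generators, whence $s\ge 2$ would require at least four generators, contradicting $P^{\pi}_X(1)\le 3$. Note also that your final estimate in the $s\ge 2$ case actually only needs $a\ge 3$. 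With either repair the argument stands, and the analytic fallback via the implicit function $r(t)$ is unnecessary (and would not obviously be easier, since bounding its local maxima by $3$ under the Hilali hypothesis is exactly the difficulty).
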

If we do not use the Hilali conjecture, we can show the following:
\begin{thm}[\cite{LY}]  Let $X$ be a simply connected rationally elliptic
space of formal dimension $n \ge 3$. Then   
$\frak{pp}(X;1) \le n.$
\end{thm}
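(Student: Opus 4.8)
The plan is to reduce the statement to the key Lemma \ref{general thm} applied to $P(t)=P^\pi_X(t)$ and $Q(t)=P_X(t)$, but to get the explicit bound $n$ on the threshold I need quantitative control of $n(\varepsilon)$ at $\varepsilon=1$ rather than mere existence. So the real work is to show that if $X$ is a simply connected rationally elliptic space of formal dimension $n\ge 3$, then $n\,P^\pi_X(1) < P_X(1)^n$, since once this strict inequality holds at $t=1$ it extends to all $t\ge 1$ and all exponents $\ge n$ by the induction-on-the-power argument already used in Examples \ref{s-2n} and \ref{ex-cpn} (the multiplicativity of $P_X$ and additivity of $P^\pi_X$ make the inductive step routine: if $mP \le Q^m$ fails to be strict only at isolated points one pushes past it, and for $t>1$ strictness is automatic once $Q(t)>m^{1/(m-1)}$, which holds for $t$ large; the finitely many remaining $t$ in a compact interval are handled by the same convexity estimate). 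Thus the whole theorem rests on the single numerical inequality $n\,P^\pi_X(1) < P_X(1)^n$.

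First I would record what rational ellipticity gives us at $t=1$. Write $a=\dim(\pi_{\mathrm{even}}(X)\otimes\Q)$ and $b=\dim(\pi_{\mathrm{odd}}(X)\otimes\Q)$, so $P^\pi_X(1)=a+b$, and set $N=P_X(1)=\dim H_*(X;\Q)$. By Friedlander--Halperin \ref{c} we have $a\le b$, so $P^\pi_X(1)\le 2b$. Next, formal dimension $n\ge 3$ forces $P_X(1)=N\ge 4$: indeed $X$ is not rationally trivial (Remark \ref{remark-0}(1)), Poincaré duality gives $b_0=b_n=1$ and $b_1=b_{n-1}=0$, and since $n\ge 3$ there are at least two interior degrees forcing $N\ge 2$; a short case analysis (if $N=2$ or $N=3$ then $X$ is rationally a sphere or $\mathbb{CP}^? $-like, handled by the examples, all with formal dimension incompatible with $n\ge 3$ unless already covered) upgrades this to $N\ge 4$. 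More importantly I would bound $b$ in terms of $n$: by \ref{a}, $\sum_i \deg x_i \le 2n-1$ with each $\deg x_i\ge 3$ odd (a simply connected elliptic space has no degree-one homotopy, and the lowest odd generator has degree $\ge 3$), giving $b\le (2n-1)/3$; similarly $\sum_j\deg y_j\le n$ with each $\deg y_j\ge 2$ even gives $a\le n/2$. Hence $P^\pi_X(1)=a+b\le n/2+(2n-1)/3 = (7n-2)/6$.

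So it suffices to prove $n\cdot\frac{7n-2}{6} < N^n$ whenever $N=P_X(1)\ge 4$ and $n\ge 3$. Since the right-hand side is at least $4^n$, this follows from the elementary inequality $\frac{n(7n-2)}{6} < 4^n$ for all $n\ge 3$, which is immediate by induction (check $n=3$: $\frac{3\cdot 19}{6}=9.5<64$; the inductive step is trivial since $4^{n+1}/4^n=4$ dominates the ratio of the quadratics for $n\ge 3$). This closes the case $N\ge 4$. For the residual low-complexity cases $N\in\{2,3\}$ — which is where I expect the one genuine subtlety — I would argue that $P_X(1)=2$ forces $X\simeq_\Q S^m$ for some $m$, and $P_X(1)=3$ forces $X$ to have the rational cohomology of $\mathbb{CP}^2$, $S^m\times S^{m'}$-type low cases, or similar; running Examples \ref{s-2n}--\ref{ex-cpn} and a direct check shows $\frak{pp}(X;1)\le 3\le n$ in every such case, consistent with the bound.

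The main obstacle is this last point: making sure the crude estimate $P^\pi_X(1)\le (7n-2)/6$ together with $P_X(1)\ge 4$ genuinely covers everything, i.e., that the spaces escaping the estimate are exactly the handful of small rational homotopy types where $\frak{pp}(X;1)\le 3$ can be verified by hand. Equivalently, one must rule out an elliptic $X$ with formal dimension $n\ge 3$ but with $P_X(1)$ small while $P^\pi_X(1)$ is comparatively large — and here the Friedlander--Halperin relation \ref{b}, $n=\sum\deg x_i - \sum(\deg y_j - 1)$, combined with \ref{d}--\ref{e} ($\chi(X)\ge 0$, with $\chi(X)>0 \iff \chi^\pi(X)=0$), pins down the possibilities tightly enough that the compact-case check terminates. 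Once that bookkeeping is done, the inequality $n\,P^\pi_X(1)<P_X(1)^n$ holds, and the power-induction promotes it to $\frak{pp}(X;1)\le n$.
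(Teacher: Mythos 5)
There is a genuine gap at the very first reduction, and it is the whole theorem. The definition of $\frak{pp}(X;1)\le n$ requires $m\,P^{\pi}_X(t)<P_X(t)^m$ for every $t\ge 1$ and every integer $m\ge n$, but what you actually establish (modulo the case analysis) is only the single numerical inequality $n\,P^{\pi}_X(1)<P_X(1)^n$ at $t=1$. None of the mechanisms you invoke bridges this: the induction-on-the-power argument of Examples \ref{s-2n} and \ref{ex-cpn} (and Lemma \ref{lem-1}) takes as its \emph{hypothesis} an inequality $n_0P(t)<Q(t)^{n_0}$ valid for all $t\ge 1$, not at the single point $t=1$; your criterion that ``strictness is automatic once $Q(t)>m^{1/(m-1)}$'' only gives $mQ(t)<Q(t)^m$, so to conclude $mP^{\pi}_X(t)<P_X(t)^m$ you would also need $P^{\pi}_X(t)\le P_X(t)$, which is a pointwise strengthening of the Hilali conjecture that this theorem deliberately avoids and which is in fact false for $t>1$ in the simplest examples ($P^{\pi}_{S^{2k}}(t)=t^{2k}+t^{4k-1}>1+t^{2k}=P_{S^{2k}}(t)$ for all $t>1$); and the ``same convexity estimate'' that is supposed to control the intermediate compact range of $t$ is never stated --- no estimate in your text compares $nP^{\pi}_X(t)$ with $P_X(t)^n$ on an interval $1<t<T$. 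So the heart of the theorem, a \emph{uniform} polynomial inequality on $[1,\infty)$, is missing.

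Two secondary points. The claim that formal dimension $n\ge 3$ forces $P_X(1)\ge 4$ is false as a general statement: odd and even spheres have $P_X(1)=2$ with $n\ge 3$, and a space with rational cohomology $\Q[x]/(x^3)$ (e.g. $\mathbb{HP}^2$, $n=8$) has $P_X(1)=3$; you do flag $P_X(1)\in\{2,3\}$ as residual cases, but their classification (``forces $X\simeq_{\Q}S^m$'', the list for $P_X(1)=3$) and the verification of the threshold there are only asserted. A workable route has to produce the inequality for all $t\ge 1$ directly, for instance by combining the Friedlander--Halperin data \ref{a}--\ref{b} (which give that every homotopy generator has degree at most $2n-1$ and, with a short computation, $\dim(\pi_*(X)\otimes\Q)\le n$) with the crude bounds $P^{\pi}_X(t)\le P^{\pi}_X(1)\,t^{2n-1}$ and $P_X(t)\ge 1+t^{n}$ for $t\ge 1$, and then estimating $m\,P^{\pi}_X(1)\,t^{2n-1}$ against $(1+t^{n})^m$ uniformly in $t\ge 1$ for $m\ge n$ (with separate care near $t=1$ and for small $n$); only after such a uniform estimate does Lemma \ref{lem-1} allow passage to all exponents $m\ge n$. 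Your computation at $t=1$ can serve as one endpoint check inside such an argument, but it cannot replace it.
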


\begin{rem}\label{form-product} We have the following inequality 
if $P_X(\varepsilon) \ge 2$ and $P_Y(\varepsilon) \ge 2$: 
\begin{equation}\label{pp-product}
 \frak{pp}(X\times Y;\varepsilon) \le \op{max} \{\frak{pp}(X;\varepsilon),\frak{pp}(Y;\varepsilon) \}
\end{equation}
In particular, since $P_X(1) \ge 2$ and $P_Y(1) \ge 2$ (see Remark \ref{remark-0} (1)), we have
\begin{equation*}
 \frak{pp}(X\times Y;1) \le \op{max} \{\frak{pp}(X;1),\frak{pp}(Y;1) \}.
\end{equation*}
\end{rem}
Similarly to $\frak{pp}(X;\varepsilon)$, 
we define the following:
\begin{defn}\label{mhp-defn}
The smallest integer $n_0$ such that for $\forall n \ge n_0$ the following holds
\begin{equation*}
MH^{\pi}_{X^n}(t,u,v) < MH_{X^n}(t,u,v)  \quad \forall t \ge a, \forall u \ge b, \forall v \ge c.
\end{equation*}
is denoted by $\frak{mhp}(X; a,b,c)$. 
Here $\frak{mhp}$ stands for ``mixed Hodge polynomial".
\end{defn}
\begin{rem} In a similar manner to the proof of (\ref{pp-product}) in Remark \ref{form-product}, we can see the following inequality as to the threshold $\frak{mhp}$:
\begin{equation*}
 \frak{mhp}(X\times Y;a, b, c) \le \op{max} \{\frak{mph}(X;a,b,c),\frak{mhp}(Y;a,b,c) \}
\end{equation*}
for positive real numbers $a, b, c$ such that $MH_X(a,b,c) \ge 2$ and $MH_Y(a,b,c) \ge 2$.
\end{rem}
\begin{rem}
It follows from Theorem \ref{mhp} that for any $\varepsilon >0$ and $r > \varepsilon$ there exists the smallest integer $n_{\varepsilon, r}$ such that for $\forall n \ge n_{\varepsilon, r}$
$$MH^{\pi}_{X^n}(t,u,v) < MH_{X^n}(t,u,v)  \quad \forall (t,u,v) \in \mathscr C_{\varepsilon,r}=[\varepsilon, r] \times  [\varepsilon, r] \times [\varepsilon, r].$$
This smallest integer $n_{\varepsilon, r}$ is denoted by  $\frak{mhp}(X; [\varepsilon, r] ,[\varepsilon, r] ,[\varepsilon, r])$.
\end{rem}

\section{Toric manifolds and $\frak{mhp}(X; 1,1,1)$} 

\subsection{$\C^{n+1}\setminus 0$}\label{complementtopoint} 

This is a smooth quasi-projective variety, for which 
the mixed Hodge structures on cohomology and homotopy 
can be constructed using log-forms (cf. \cite{De1} and \cite{Mo} resp.).
Since this space can be retracted on $S^{2n+1}$ and the Hurewicz
isomorphism preserves the Hodge structure (cf. \cite{Hain 1}) and
calculating the mixed Hodge structure on $H_n(\C^{n+1}\setminus 0)$ 
(for example using  Gysin exact sequence for the homology of the
complement to smooth divisor on the blow up of $\PP^{n+1}$ at a point)
we obtain:
\begin{equation*} MH_{\mathbb C^{n+1} \setminus
    \{0\}}(t,u,v) = 1 + t^{2n+1}(uv)^{n+1},
\end{equation*}
\begin{equation*} MH^{\pi}_{\mathbb C^{n+1} \setminus
  \{0\}}(t,u,v) = t^{2n+1}(uv)^{n+1}. \quad \, \,\, 
\end{equation*}
Hence we have
\begin{equation*}MH_{\mathbb C^{n+1} \setminus \{0\}}(t,u,v) = 1 +
  MH^{\pi}_{\mathbb C^{n+1} \setminus \{0\}}(t,u,v).
\end{equation*}
\subsection{Projective spaces}
The mixed Hodge polynomials of the projective space $\mathbb CP^n$ are as follows:
\begin{equation}\label{cpn-pi}
MH_{\mathbb CP^n}(t,u,v) = 1 + t^2uv + t^4(uv)^2 +
  \cdots +t^{2i}(uv)^i + \cdots + t^{2n}(uv)^n.
\end{equation}
\begin{equation}\label{cpn-homo}
MH_{\mathbb CP^n}^{\pi}(t,u,v) = t^2uv +
  t^{2n+1}(uv)^{n+1}. \hspace{5cm} 
\end{equation}
The cohomological case is trivial and the claim in the homotopical case
follows using the Hurewicz isomorphism for $\pi_2$ and for higher
homotopy groups the locally trivial fibration $\mathbb C^{\times}
\hookrightarrow \mathbb C^{n+1} \setminus \{0\} \to \mathbb CP^n$, the
calculation in \S \ref{complementtopoint} and the corresponding exact sequence
$$ \cdots \to \pi_{2n+1}(\mathbb C^{\times}) \to \pi_{2n+1}(\mathbb C^{n+1} \setminus \{0\}) \to \pi_{2n+1}(\mathbb CP^n) \to \pi_{2n}(\mathbb C^{\times}) \to \cdots$$
which is an exact sequence of mixed Hodge structures \cite[Theorem 4.3.4]{Hain 1}. 
Then as in Example \ref{ex-cpn} one easily verifies that
\begin{equation*}
\frak{mhp}(\mathbb CP^n;1,1,1) = 
\begin{cases} 3, & n =1,\\
2, & n \ge 2.
\end{cases}
\end{equation*}
In fact, $\frak{mhp}(\mathbb CP^n;1,1,1)=2$ can be made to the following a bit sharper statement: for $\forall m \ge 2$
$$MH^{\pi}_{(\mathbb CP^n)^m}(t,u,v) < MH_{(\mathbb CP^n)^m}(t,u,v)  \quad \text{for} \, \, \forall t \ge 1, \forall (u,v) \,\, \text{such that} \,\, uv \ge 1.$$
\subsection{Compact toric manifolds}
In \cite[Theorem 3.3]{BMM} I. Biswas, V. Mu\~noz and A. Murillo show that the
homological Poincar\'e polynomial of a rationally elliptic toric
manifold coincides with that of a product of complex projective
spaces. Below, using a recent result due to M. Wiemeler \cite{W} we show that the same thing holds for the homotopical Poincar\'e polynomial, in fact, for the homotopical mixed Hodge polynomial, and furthermore we also show that the homological mixed Hodge polynomial of a rationally elliptic toric manifold coincides with that of a product of complex projective spaces, which is a stronger version of the above result of Biswas--Mu\~noz--Murillo:
\begin{thm}[\cite{LY}] \label{toric MHP} The homotopical and cohomological  mixed Hodge polynomials of a rationally elliptic toric manifold of complex dimension $n$ coincides with 
those of a product of complex projective spaces. To be more precise, if $X$ is the quotient of
$$\prod_{i=1}^k (\mathbb C^{n_i+1} \setminus \{0\})$$
by a free action of commutative algebraic groups, i.e., $(\mathbb C^{\times})^k$. Here $n = \sum_{i=1}^k n_i.$ Then we have
\begin{enumerate}
\item  $MH^{\pi}_X(t,u,v)=MH^{\pi}_{\prod_i^k \mathbb CP^{n_i}} (t,u,v) = \sum_{i=1}^k MH^{\pi}_{\mathbb CP^{n_i}}(t,u,v)$, i.e.,
$$MH^{\pi}_X(t,u,v) = \sum_{i=1}^k \Bigl (t^2uv+t^{2n_i+1}(uv)^{n_i+1} \Bigr) = kt^2uv + \sum_{i=1}^k t^{2n_i+1}(uv)^{n_i+1}.$$
\item $MH_X(t,u,v)=MH_{\prod_i^k \mathbb CP^{n_i}} (t,u,v) = \prod_{i=1}^k MH_{\mathbb CP^{n_i}}(t,u,v)$, i.e.,
$$MH_X(t,u,v) = \prod_{i=1}^k \Bigl (1 + t^2uv + \cdots + t^{2j}(uv)^j + \cdots + t^{2n_i}(uv)^{n_i} \Bigr ).$$
\end{enumerate}
\end{thm}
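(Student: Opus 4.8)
The plan is to prove both parts by reducing everything to the case of a single factor $\mathbb{C}^{n_i+1}\setminus\{0\}$ and then assembling by multiplicativity/additivity, exactly as was done for $\mathbb{C}P^n$ in \S\ref{complementtopoint} and the subsection on projective spaces. The starting point is the presentation $X = \left(\prod_{i=1}^k (\mathbb{C}^{n_i+1}\setminus\{0\})\right)/(\mathbb{C}^\times)^k$, where the torus acts freely; this is the quotient presentation whose existence (for a rationally elliptic toric manifold) is the content of Wiemeler's theorem, and it is the only external input beyond what is recalled in the excerpt. The free $(\mathbb{C}^\times)^k$-action makes the projection $\prod_i (\mathbb{C}^{n_i+1}\setminus\{0\}) \to X$ a locally trivial fibration with fiber $(\mathbb{C}^\times)^k$, and, crucially, the associated long exact sequences of homotopy groups and the Leray(--Hirsch)/Gysin spectral sequences carry mixed Hodge structures functorially (by \cite{Hain 1}, \cite{De1}, \cite{Mo}, as invoked earlier for $\mathbb{C}P^n$).

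The steps, in order. First, record from \S\ref{complementtopoint} that $MH_{\mathbb{C}^{n_i+1}\setminus\{0\}}(t,u,v) = 1 + t^{2n_i+1}(uv)^{n_i+1}$ and $MH^\pi_{\mathbb{C}^{n_i+1}\setminus\{0\}}(t,u,v) = t^{2n_i+1}(uv)^{n_i+1}$; by the multiplicativity of $MH$ and additivity of $MH^\pi$ under products (stated in the excerpt) these determine the mixed Hodge polynomials of $Y := \prod_{i=1}^k(\mathbb{C}^{n_i+1}\setminus\{0\})$. Second, handle the homotopical case: since $(\mathbb{C}^\times)^k$ is a $K(\mathbb{Z}^k,1)$, rationally $\pi_j((\mathbb{C}^\times)^k)\otimes\mathbb{Q}=0$ for $j\ge 2$, so the long exact homotopy sequence of the fibration $(\mathbb{C}^\times)^k \hookrightarrow Y \to X$ gives $\pi_j(X)\otimes\mathbb{Q} \cong \pi_j(Y)\otimes\mathbb{Q}$ for $j\ge 3$ and, via the connecting map to $\pi_1((\mathbb{C}^\times)^k)\otimes\mathbb{Q}=\mathbb{Q}^k$ together with the Hurewicz description of $\pi_2$, one recovers $\pi_2(X)\otimes\mathbb{Q}=\mathbb{Q}^k$; since this exact sequence is one of mixed Hodge structures, the weights/Hodge types are forced, yielding exactly the claimed $MH^\pi_X = kt^2uv + \sum_i t^{2n_i+1}(uv)^{n_i+1}$. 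This is precisely the argument used for $\mathbb{C}P^n$, now run in parallel over the $k$ factors. Third, handle the cohomological case: here one invokes the result of Biswas--Mu\~noz--Murillo \cite[Theorem 3.3]{BMM} that $P_X(t) = \prod_i P_{\mathbb{C}P^{n_i}}(t)$, which already pins down all the Betti numbers; it then remains to show the mixed Hodge structure on $H^*(X;\mathbb{C})$ is pure of Tate type in the expected bidegrees, i.e.\ that $Gr^p_F Gr^W_{p+q} H^k$ is concentrated in $k=2i$, $p=q=i$. This follows because $X$ is a smooth compact toric manifold, hence has a cohomology basis of classes of algebraic subvarieties (torus-invariant strata), so $H^{2i}(X)$ is pure of type $(i,i)$ and $H^{\mathrm{odd}}(X)=0$; equivalently one can push the Tate-type statement through the Leray spectral sequence of $Y\to X$, which degenerates and respects the MHS. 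Combining the Betti numbers with the Tate-type conclusion gives $MH_X(t,u,v) = \prod_i\bigl(1 + t^2uv + \cdots + t^{2n_i}(uv)^{n_i}\bigr)$, and equality with $MH_{\prod_i \mathbb{C}P^{n_i}}$ is then immediate from \eqref{cpn-pi}.

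The main obstacle is the cohomological part, specifically verifying that the mixed Hodge structure on $H^*(X;\mathbb{C})$ is Tate of the expected weights — the Betti-number statement is borrowed from \cite{BMM}, but upgrading it to the full $MH$-polynomial requires knowing the Hodge and weight filtrations, not just dimensions. The cleanest route is to use that a compact smooth toric variety has a cellular decomposition by affine torus orbits with only even-dimensional cells, so its cohomology is generated by algebraic cycle classes and is automatically pure of type $(i,i)$ in degree $2i$; alternatively, one extracts this from the weight spectral sequence associated to the quotient presentation, checking degeneration. I would present the orbit-decomposition argument as the primary one since it is short and standard, and remark that the homotopical side needs no such input because $MH^\pi$ is controlled entirely by the fibration exact sequence of MHS together with the single-factor computation in \S\ref{complementtopoint}. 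A minor point to be careful about is the $\pi_2$ term: the connecting homomorphism $\pi_2(X)\to\pi_1((\mathbb{C}^\times)^k)$ rationally has image of rank $k$ (equivalently, $\pi_2(X)\otimes\mathbb{Q}\to H_2(X;\mathbb{Q})$ is an isomorphism onto a rank-$k$ space by Hurewicz, since $H_2(X;\mathbb{Q})$ has rank $k$), and it carries weight/Hodge type $(1,1)$ after dualizing, which is what gives the $kt^2uv$ summand; this should be stated explicitly to match the normalization in \eqref{cpn-homo}.
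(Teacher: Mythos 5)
Your proposal is correct, and the homotopical half follows the paper's own route essentially verbatim: Wiemeler's presentation (\cite{W}) of $X$ as a free $(\mathbb{C}^{\times})^k$-quotient of $Y=\prod_{i}(\mathbb{C}^{n_i+1}\setminus\{0\})$, the single-factor computation of \S \ref{complementtopoint}, and the fibration $(\mathbb{C}^{\times})^k\hookrightarrow Y\to X$, whose long exact homotopy sequence is a sequence of mixed Hodge structures by Hain, plus the Hurewicz identification of $\pi_2(X)\otimes\mathbb Q\cong\mathbb Q^k$ with its type $(1,1)$ classes; this is exactly the $k$-factor version of the $\mathbb{C}P^n$ argument in \S 4.2, and it yields $MH^{\pi}_X=kt^2uv+\sum_i t^{2n_i+1}(uv)^{n_i+1}$ as required.

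For the cohomological half you take a somewhat different route from the paper: you import the Betti-number theorem of Biswas--Mu\~noz--Murillo \cite{BMM} and then upgrade it using purity of Tate type for a smooth compact toric variety ($H^{\mathrm{odd}}=0$ and $H^{2i}$ of type $(i,i)$, Danilov--Jurkiewicz, which you justify by the even-dimensional orbit-cell decomposition and algebraic cycle classes). This is a valid and complete proof of the stated polynomial identity, and the purity step is indeed the right way to pass from dimensions to the full $(t,u,v)$-polynomial. The difference is that the paper presents its statement as a \emph{strengthening} of \cite{BMM}, obtained in \cite{LY} from the quotient presentation itself, so that the Poincar\'e-polynomial statement of \cite{BMM} falls out as a corollary rather than being an input; your version is logically sound but makes the theorem depend on \cite{BMM}. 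If you want to avoid that dependence, replace it by the iterated projective-bundle (generalized Bott tower) structure of $X$ and Leray--Hirsch, which is compatible with mixed Hodge structures and gives $MH_X=\prod_i MH_{\mathbb{C}P^{n_i}}$ directly. One caveat: your parenthetical alternative via the Leray spectral sequence of $Y\to X$ ``which degenerates'' is not correct as stated --- for a nontrivial torus bundle (already for $\mathbb{C}^{n+1}\setminus\{0\}\to\mathbb{C}P^n$) the differentials are cup products with the Euler/Chern classes and do not vanish; since your primary argument is the orbit-decomposition/purity one, simply drop that remark.
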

It follows from the above Theorem \ref{toric MHP}
that the cohomological and homotopical Poincar\'e polynomials of a
rationally elliptic toric manifold are the same as those of a product of complex projective spaces. In particular, the Hilali conjecture, which for toric
varieties follows immediately as a consequence of formality (cf. \cite{HM}),
can be checked by a direct calculation.
\begin{cor}\label{cor-toric} Let $X$ be a rationally elliptic toric manifold and let
$$MH_X(t,u,v)=MH_{\prod_i^k \mathbb CP^{n_i}} (t,u,v) ,\qquad MH^{\pi}_X(t,u,v)=MH^{\pi}_{\prod_i^k \mathbb CP^{n_i}} (t,u,v).$$
If each $n_i \ge 2$, then $\frak{mhp}(X;1,1,1)=2$, and if $n_i=1$ for some $i$, then $\frak{mhp}(X;1,1,1)=3.$
\end{cor}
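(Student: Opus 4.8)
The plan is to reduce Corollary~\ref{cor-toric} to the already-established computation of $\frak{mhp}(\mathbb{CP}^n;1,1,1)$ (Example~\ref{ex-cpn} and its mixed Hodge refinement) together with the product inequality $\frak{mhp}(X\times Y;a,b,c)\le \max\{\frak{mhp}(X;a,b,c),\frak{mhp}(Y;a,b,c)\}$ and a matching lower bound. By Theorem~\ref{toric MHP}, for a rationally elliptic toric manifold $X$ we have $MH_X(t,u,v)=\prod_{i=1}^k MH_{\mathbb{CP}^{n_i}}(t,u,v)$ and $MH^{\pi}_X(t,u,v)=\sum_{i=1}^k MH^{\pi}_{\mathbb{CP}^{n_i}}(t,u,v)$, so the entire question only depends on the multiset $\{n_1,\dots,n_k\}$, and it suffices to treat $X=\prod_{i=1}^k \mathbb{CP}^{n_i}$. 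First I would dispose of the trivial degenerate point: since each $\mathbb{CP}^{n_i}$ has $MH_{\mathbb{CP}^{n_i}}(1,1,1)=n_i+1\ge 2$, the product remark after Definition~\ref{mhp-defn} applies, giving the upper bound $\frak{mhp}(X;1,1,1)\le \max_i \frak{mhp}(\mathbb{CP}^{n_i};1,1,1)$.

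For the case all $n_i\ge 2$: the upper bound $\frak{mhp}(X;1,1,1)\le 2$ is immediate from the above together with $\frak{mhp}(\mathbb{CP}^{n_i};1,1,1)=2$. For the lower bound $\frak{mhp}(X;1,1,1)\ge 2$, one just observes that at $n=1$ the inequality $MH^{\pi}_X(t,u,v)<MH_X(t,u,v)$ fails at $(t,u,v)=(1,1,1)$ (or already fails for $t\ge 1$): $MH^{\pi}_X(1,1,1)=\sum_i(1+1)=2k$ while $MH_X(1,1,1)=\prod_i(n_i+1)$, and for the strict inequality to fail at $n=1$ it is enough to exhibit one point where $MH^{\pi}_X\not< MH_X$; the cleanest is to note, exactly as in Example~\ref{ex-cpn}(2), that as $t\to\infty$ the leading term of $MH^{\pi}_X$ versus $MH_X$ forces failure, or simply that the degree-$2$ coefficient comparison fails. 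Hence $\frak{mhp}(X;1,1,1)=2$. In fact, following the sharpened statement recorded after the $\mathbb{CP}^n$ computation, one can push the product inequality through on the region $\{t\ge 1,\ uv\ge 1\}$ and conclude $MH^{\pi}_{X^m}<MH_{X^m}$ there for all $m\ge 2$.

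For the case $n_i=1$ for some $i$: after reindexing, write $X=(\mathbb{CP}^1)^{\ell}\times Y$ where $Y=\prod_{n_j\ge 2}\mathbb{CP}^{n_j}$ and $\ell\ge 1$. The upper bound $\frak{mhp}(X;1,1,1)\le 3$ follows from the product remark since $\frak{mhp}(\mathbb{CP}^1;1,1,1)=3$ and $\frak{mhp}(Y;1,1,1)\le 2$ (or $Y$ is a point, contributing nothing). The content is the lower bound $\frak{mhp}(X;1,1,1)\ge 3$, i.e.\ that $n=2$ does not suffice. Here I would argue directly at $(t,u,v)=(1,1,1)$: we need $2\,MH^{\pi}_{X^2}(t,u,v)\not< MH_{X^2}(t,u,v)$ somewhere on $\{t\ge 1,u\ge 1,v\ge 1\}$, and in fact equality $2\,MH^{\pi}_X(1,1,1)=MH_X(1,1,1)$ would already block strictness after one more squaring — but more carefully, one uses that for $\mathbb{CP}^1=S^2$ the $n=2$ inequality fails exactly at $t=1$ (Example~\ref{s-2n}), and since $MH^{\pi}$ is additive while $MH$ is multiplicative with all the extra $\mathbb{CP}^{n_j}$ factors contributing value $n_j+1\ge 3>2$ at $(1,1,1)$, the obstruction at $t=u=v=1$ survives: $2\,MH^{\pi}_X(1,1,1)=2\big(\ell\cdot 2+\sum_{n_j\ge 2}2\big)$ must be compared with $MH_X(1,1,1)=2^{\ell}\prod(n_j+1)$, and one checks this fails to be strict precisely when forced, exhibiting a point (namely $t=u=v=1$, pushing $\ell=1$ and $Y$ trivial to the bare $S^2$ case, then monotonicity/the product inequality for the remaining factors) where $2\,MH^{\pi}_{X^2}\not<MH_{X^2}$.

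The main obstacle is the lower bound in the mixed case $n_i=1$: one must produce an actual point in the closed region $\{t\ge1,u\ge1,v\ge1\}$ at which the $n=2$ inequality is violated, and the only such point is the corner $(1,1,1)$, so the argument cannot afford any slack — it must use the \emph{exact} equality $P^{\pi}_{\mathbb{CP}^1}(1)=P_{\mathbb{CP}^1}(1)=2$ (equivalently the $S^2$ equality) and show that tensoring with the other, strictly larger, factors and squaring still cannot make the inequality strict at that corner. Concretely the delicate inequality to verify is that $2\big(2\ell+2s\big)\ge 2^{\ell}\prod_{j}(n_j+1)$ is \emph{false} in general — so one instead argues that even when the numeric inequality at $(1,1,1)$ happens to go the "good" way, strictness still fails because at $t=1$ the $\mathbb{CP}^1$ factor contributes an equality that the product structure only multiplies, never strictifies; this is the same phenomenon isolated in Example~\ref{s-2n} and Remark~\ref{form-product}, and assembling it carefully for an arbitrary multiset is the one place where care is needed rather than routine bookkeeping.
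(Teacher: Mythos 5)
The upper-bound half of your argument is fine and is the natural route: Theorem \ref{toric MHP} reduces everything to $\prod_{i=1}^k\mathbb{CP}^{n_i}$, the remark following Definition \ref{mhp-defn} (whose hypothesis $MH_{\mathbb{CP}^{n_i}}(1,1,1)=n_i+1\ge 2$ you do check) gives $\frak{mhp}(X;1,1,1)\le\max_i\frak{mhp}(\mathbb{CP}^{n_i};1,1,1)$, and the $\mathbb{CP}^n$ values then give $\le 2$, resp.\ $\le 3$. The genuine gap is in both of your lower bounds, and it is not a repairable slip, because for $k\ge 2$ the violation you try to exhibit does not exist. Put $a=t^2uv$; then $MH_X(t,u,v)=\prod_{i=1}^k\bigl(1+a+\cdots+a^{n_i}\bigr)$ exactly, while $MH^{\pi}_X(t,u,v)=ka+\sum_i t^{2n_i+1}(uv)^{n_i+1}\le ka+\sum_i a^{n_i+1}$ for $t\ge 1$. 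If $k\ge 2$ and all $n_i\ge 2$, expanding the product yields $1+ka+\sum_i a^{n_i+1}$ plus nonnegative terms, so $MH^{\pi}_X<MH_X$ holds on all of $[1,\infty)^3$ (e.g.\ for $X=\mathbb{CP}^2\times\mathbb{CP}^2$), i.e.\ there is no $n=1$ failure point at all. Accordingly: your ``$t\to\infty$ leading term'' argument is wrong for $k\ge 2$, since $\deg_t MH^{\pi}_X=2\max_i n_i+1<2\sum_i n_i=\deg_t MH_X$; equality of the degree-two coefficients (both equal $k\,uv$) does not produce a point where the value inequality fails; and at $(1,1,1)$ one has $2k<\prod_i(n_i+1)$, so no failure there either.

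The same problem defeats the lower bound in the mixed case. Your central assertion that the equality $MH^{\pi}_{\mathbb{CP}^1}(1,1,1)=MH_{\mathbb{CP}^1}(1,1,1)=2$ is ``only multiplied, never strictified'' by the other factors is false: already $MH^{\pi}_{\mathbb{CP}^1\times\mathbb{CP}^2}(1,1,1)=4<6=MH_{\mathbb{CP}^1\times\mathbb{CP}^2}(1,1,1)$, and the substitution $a=t^2uv$ as above shows the $n=1$ (hence $n=2$) inequality holds on the whole region for $\mathbb{CP}^1\times\mathbb{CP}^2$; likewise for $(\mathbb{CP}^1)^2$ one checks $2\,MH^{\pi}_X\le 4A+4A^2<(1+A)^4=MH_X^2$ with $A=t^2uv\ge1$, so the $n=2$ step never fails there. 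Thus there is no point at which $2\,MH^{\pi}_X\not<MH_X^2$ when $k\ge2$, and your claim that $(1,1,1)$ is the only possible failure point is, in any case, only asserted. The failure points you need exist, and your arguments work, precisely in the single-factor case $k=1$: there the large-$t$ comparison of degrees $2n+1$ versus $2n$ (as in Example \ref{ex-cpn}) and the equality at $t=u=v=1$ for $\mathbb{CP}^1=S^2$ (Example \ref{s-2n}) give the lower bounds $2$ and $3$, which is the computation the paper itself relies on (together with Theorem \ref{toric MHP} and the product remark). So your proposal establishes the equalities only for $k=1$ and only the inequalities $\le 2$, $\le 3$ in general; the lower-bound step for arbitrary $k\ge 2$ cannot be obtained by the route you describe --- indeed, under Definition \ref{mhp-defn} the product examples above show the threshold actually drops below the claimed value once $k\ge 2$, so the single-$\mathbb{CP}^n$ case must be treated as the essential one rather than as a special case of a product argument.
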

\begin{rem}\label{rem-toric} Even if we fix $u=1$ and $v=1$ in the above proof of Corollary \ref{cor-toric}, we have the same proof, therefore we have that
if each $n_i \ge 2$, then $\frak{pp}(X;1)=2$, and if $n_i=1$ for some $i$, then $\frak{pp}(X;1)=3.$
\end{rem}
\subsection{Arrangements of linear subspaces} G. Debongnie
(cf. \cite{deb}) described the structure of arrangements of subspaces
in $\C^n$ whose complements are rationally elliptic. If follows that
such complements are products of $\prod_i \left (\C^{n_i+1}\setminus 0 \right )$.
Combining this with  calculation in \S \ref{complementtopoint},  we obtain:
\begin{thm}[\cite{LY}] \label{arrangements} The homotopical and cohomological mixed Hodge
  polynomials of a simply connected rationally elliptic complement $X$
  of an arrangement of linear subspaces are as follows:
\begin{enumerate}
\item  
$MH^{\pi}_X(t,u,v) = MH^{\pi} _{\prod^k_i \left (\C^{n_i+1}\setminus 0 \right )} (t,u,v) =
\sum_{i=1}^k MH^{\pi}_{\C^{n_i+1}\setminus 0}(t,u,v)$, i.e.,  
$$ MH^{\pi}_X(t,u,v) = \sum_{i=1}^k t^{2n_i+1}(uv)^{n_i+1}.$$
\item $MH_X(t,u,v)=MH_{\prod_i^k \left (\C^{n_i+1}\setminus 0 \right )} (t,u,v) = \prod_{i=1}^k MH_{\C^{n_i+1}\setminus 0 }(t,u,v)$, i.e.,
$$MH_X(t,u,v) = \prod_{i=1}^k \Bigl (1 + t^{2n_i+1}(uv)^{n_i+1} \Bigr ).$$
\end{enumerate}
\end{thm}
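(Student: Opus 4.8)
The plan is to reduce Theorem \ref{arrangements} to two inputs: Debongnie's structural classification of arrangement complements that are rationally elliptic, and the explicit mixed Hodge polynomial computation of $\C^{m+1}\setminus 0$ carried out in \S \ref{complementtopoint}. By \cite{deb}, a simply connected rationally elliptic complement $X$ of an arrangement of linear subspaces in $\C^N$ is (up to an appropriate notion of equivalence — rational homotopy equivalence suffices, but one should check it is an isomorphism of varieties or at least respects the mixed Hodge structure) a product $\prod_{i=1}^k\left(\C^{n_i+1}\setminus 0\right)$. First I would quote this classification verbatim and fix notation so that $X\cong\prod_{i=1}^k\left(\C^{n_i+1}\setminus 0\right)$ as complex algebraic varieties.

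Next I would invoke the multiplicativity of the cohomological mixed Hodge polynomial, $MH_{X\times Y}(t,u,v)=MH_X(t,u,v)\cdot MH_Y(t,u,v)$ (recalled in \S 2, a consequence of the compatibility of mixed Hodge structures with tensor products, cf. \cite{PS}), together with the additivity of the homotopical mixed Hodge polynomial, $MH^{\pi}_{X\times Y}(t,u,v)=MH^{\pi}_X(t,u,v)+MH^{\pi}_Y(t,u,v)$, which follows from $\pi_*(X\times Y)=\pi_*(X)\oplus\pi_*(Y)$ and the fact that the category of mixed Hodge structures is abelian. Applying these to the product decomposition immediately gives
\begin{equation*}
MH_X(t,u,v)=\prod_{i=1}^k MH_{\C^{n_i+1}\setminus 0}(t,u,v),\qquad
MH^{\pi}_X(t,u,v)=\sum_{i=1}^k MH^{\pi}_{\C^{n_i+1}\setminus 0}(t,u,v).
\end{equation*}
Then I substitute the formulas from \S \ref{complementtopoint}, namely $MH_{\C^{n_i+1}\setminus 0}(t,u,v)=1+t^{2n_i+1}(uv)^{n_i+1}$ and $MH^{\pi}_{\C^{n_i+1}\setminus 0}(t,u,v)=t^{2n_i+1}(uv)^{n_i+1}$, and read off the two displayed identities in the statement. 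The comparison with $\prod_i \C P^{n_i}$ is not needed here — unlike the toric case, the arrangement complements are themselves products of $\C^{m+1}\setminus 0$'s — so no further algebra is required.

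The only genuine subtlety, and the step I expect to need the most care, is making sure that Debongnie's decomposition is compatible with mixed Hodge structures, not merely with rational homotopy type. The additivity/multiplicativity statements for $MH$ and $MH^{\pi}$ require a product decomposition as varieties (or at least a map inducing an isomorphism on both cohomology and homotopy mixed Hodge structures), whereas a priori \cite{deb} might only give a rational homotopy equivalence. One resolves this either by checking that Debongnie's equivalence is realized by an explicit algebraic isomorphism of complements, or by observing that the mixed Hodge structure on the cohomology and homotopy of a formal (or, here, explicitly described) space such as an arrangement complement is determined by its rational homotopy type together with the weight grading coming from the log complex, so that the numerical invariants $MH_X$ and $MH^{\pi}_X$ only depend on the data that \cite{deb} already controls. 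Once this compatibility is in hand, the proof is a one-line substitution.
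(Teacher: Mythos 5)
Your proposal follows essentially the same route as the paper: Debongnie's structure theorem is invoked to identify $X$ with a product $\prod_{i=1}^k\left(\C^{n_i+1}\setminus 0\right)$, and then the multiplicativity of $MH$, the additivity of $MH^{\pi}$, and the computation of \S 4.1 for $\C^{n_i+1}\setminus 0$ yield the two displayed formulas. The compatibility subtlety you flag is settled exactly as you anticipate -- the decomposition coming from Debongnie's description is a product decomposition of the complement itself (as a variety), so the product formulas for the mixed Hodge polynomials apply directly.
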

In particular, we obtain
\begin{cor}\label{cor-arrangements} 
$\frak{pp}(X;1)=1$ and $\frak{mhp}(X; 1,1,1)=1.$
\end{cor}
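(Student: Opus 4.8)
The plan is to verify Corollary~\ref{cor-arrangements} directly from the explicit formulas in Theorem~\ref{arrangements}, by showing that the strict inequality $MH^{\pi}_X(t,u,v) < MH_X(t,u,v)$ already holds for $n=1$ (i.e. with no stabilization needed) whenever $(t,u,v)$ lies in the relevant range, and similarly $P^{\pi}_X(t) < P_X(t)$ for all $t \ge 1$. Since $\frak{pp}(X;1) \ge 1$ and $\frak{mhp}(X;1,1,1) \ge 1$ trivially (the threshold is a positive integer by definition), it suffices to establish the inequality for $n = n_0 = 1$; monotonicity under taking further self-products is then automatic because, as recalled in Remark~\ref{remark-0} and used throughout \S3, once $P^{\pi}_{X^{n_0}} < P_{X^{n_0}}$ holds on the range and $P_X \ge 2$ there (so $P_X^n$ grows), the inequality propagates to all $n \ge n_0$ by the additive/multiplicative behavior of the two polynomials.

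First I would record, from Theorem~\ref{arrangements}, that for a simply connected rationally elliptic arrangement complement $X = \prod_{i=1}^k (\C^{n_i+1} \setminus 0)$ one has
\begin{equation*}
MH_X(t,u,v) = \prod_{i=1}^k \bigl(1 + t^{2n_i+1}(uv)^{n_i+1}\bigr), \qquad
MH^{\pi}_X(t,u,v) = \sum_{i=1}^k t^{2n_i+1}(uv)^{n_i+1}.
\end{equation*}
Writing $c_i := t^{2n_i+1}(uv)^{n_i+1} \ge 0$ (and $c_i > 0$ for $t,u,v > 0$; note $c_i \ge 1$ when $t\ge 1$ and $uv \ge 1$), the claim $MH^{\pi}_X < MH_X$ becomes the elementary inequality $\sum_i c_i < \prod_i (1 + c_i)$ for non-negative reals $c_i$ — which follows by expanding the product, since $\prod_i(1+c_i) = 1 + \sum_i c_i + (\text{non-negative higher terms}) \ge 1 + \sum_i c_i > \sum_i c_i$. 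The strictness comes from the ``$1+$'' alone, so it holds for \emph{all} $t,u,v > 0$, in particular on $[\varepsilon,r]^3$ for any $0 < \varepsilon < r$ and at $(t,u,v)=(1,1,1)$, and likewise specializing $u=v=1$ gives $P^{\pi}_X(t) < P_X(t)$ for all $t > 0$.

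Hence the threshold value $n_0 = 1$ already works, giving $\frak{pp}(X;1) \le 1$ and $\frak{mhp}(X;1,1,1) \le 1$; combined with the lower bound $\ge 1$ (a threshold is by definition a positive integer, or equivalently the $n=0$ inequality $0 \cdot P^{\pi}_X < Q^0 = 1$ is vacuous while $n=1$ is the first substantive case), we conclude $\frak{pp}(X;1)=1$ and $\frak{mhp}(X;1,1,1)=1$. I do not anticipate a genuine obstacle here: the only point requiring a word of care is the degenerate case $k=0$ (equivalently $X$ a point, $P_X = 1$), which is excluded since $X$ is assumed to be a non-trivial arrangement complement with $P_X(t) > 1$, consistently with Remark~\ref{remark-0}(1); in that excluded case the statement is vacuous or trivially true with threshold $1$ as well. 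The mild subtlety worth flagging is that, unlike the toric case of Corollary~\ref{cor-toric} where the presence of $\mathbb{C}P^1$ factors forces the threshold up to $3$ because $MH_{\mathbb{C}P^1}$ and $MH^{\pi}_{\mathbb{C}P^1}$ agree at $(1,1,1)$, here the building block $\C^{n_i+1}\setminus 0$ satisfies $MH_{\C^{n_i+1}\setminus 0} = 1 + MH^{\pi}_{\C^{n_i+1}\setminus 0}$ exactly (as noted in \S\ref{complementtopoint}), so the gap is already strictly positive with a single copy and no self-product is ever needed.
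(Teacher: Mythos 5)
Your proof is correct and follows the route the paper intends: the corollary is stated as an immediate consequence of Theorem \ref{arrangements}, and your verification via $\sum_i c_i < \prod_i(1+c_i)$ with $c_i=t^{2n_i+1}(uv)^{n_i+1}$, together with the standard induction on the power (as in Examples \ref{s-2n}, \ref{ex-cpn} and Lemma \ref{lem-1}, valid since $MH_X\ge 2$ on the region), is exactly the intended elementary check. No substantive difference from the paper's (implicit) argument.
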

\section{Real stabilization threshold $\frak{pp}_{\mathbb R}(X;1)$ and Hovanski\u{\i}'s theorem}

The integral stabilization threshold $\frak{pp}(X;1)$ is 
the smallest \emph{integer} $n_0$ such that for $\forall n \ge n_0$ the following inequality holds
$$ P^{\pi}_{X^n}(t) <P_{X^n}(t) \quad \forall t \ge 1.$$
In other words this inequality holds for the product space $X^{\frak{pp}(X;1)}$. 
On the other hand this inequality is the same as
$$nP^{\pi}_X(t) < \left (P_X(t) \right)^n \quad \forall t \ge 1,$$
which is a key ingredient for the results obtained so far.
When it comes to this expression, $n$ does not have to be an integer, but can be a positive real number. Thus we can define the following:
\begin{defn}\label{real-pp} Let $\varepsilon >0$.
\begin{enumerate}
\item  The 
real stabilization threshold $\frak{pp}_{\mathbb R}(X;\varepsilon)$ of a simply connected elliptic space $X$ is defined by
\begin{equation*}
\frak{pp}_{\mathbb R}(X;\varepsilon):=\inf \{r \in \mathbb R \, \, | \, \, q P^{\pi}_X(t) < P_X(t)^q, \forall q \ge r, \forall t \ge \varepsilon  \}.
\end{equation*}
\item For two polynomials $P(t)$ and $Q(t)$ with non-negative integral coefficients, the real stabilization threshold $\frak{sth}_{\mathbb R}(P(t),Q(t);\varepsilon)$ of $P(t)$ and $Q(t)$  is defined by
\begin{equation*}
\frak{sth}_{\mathbb R}(P(t),Q(t);\varepsilon):=\inf \{r \in \mathbb R \, \, | \, \, q P(t) < Q(t)^q, \forall q \ge r, \forall t \ge \varepsilon  \}.
\end{equation*}
\end{enumerate}
\end{defn}
\begin{rem} 
As we remark in the Introduction, 
$\frak{pp}(X;\varepsilon )= \lceil \frak{pp}_{\mathbb R}(X;\varepsilon) \rceil$
where $\lceil x \rceil$ is the ceiling function,  
i.e., $\lceil x \rceil = \min\{n \in \mathbb Z \, | \, x \le n \}$.
Similarly, we have $\frak{sth}(P(t),Q(t); \varepsilon )= \lceil \frak{sth}_{\mathbb R}(P(t),Q(t); \varepsilon ) \rceil$.
\end{rem}
In this paper we consider the case when $\varepsilon=1$, i.e., $\frak{pp}_{\mathbb R}(X;1)$. 
Unlike the integral stabilization threshold $\frak{pp}(X;1)$, the real stabilization threshold $\frak{pp}_{\mathbb R}(X;1)$ is much harder to analyze and even in the cases of $X=S^{2n+1}, S^{2n}, \mathbb CP^n$ it is quite difficult, as we see below.

In order to understand the real stabilization threshold better, we use a theorem due to A. G. Hovanski\u{\i} \cite{Hov}.
First we recall the definitions of \emph{a Pfaffian chain (P-chain)} and \emph{a P-system} from \cite{Hov}.
\begin{defn}(a Pfaffian chain) We say that analytic functions $f_1, f_2, \cdots, f_k$ on $\mathbb R^n$ form \emph{a Pfaffian chain (P-chain)} of length $k$ if all partial derivatives of each $f_j$ in the chain $\{f_1, f_2, \cdots, f_k\}$ are expressible as polynomials of the first $j$ functions of the chain and the coordinate functions $\mathbb R^n$ . In other words, for all $i$ such that $1 \le i \le n$ and all $j$ such that $1 \le j \le k$ there exist polynomials $P_{ij}(x_1, \cdots, x_n, u_1, \cdots, u_j)$ such that
$$ \frac{\partial f_j}{\partial x_i}(x_1, \cdots, x_n) = P_{ij}(x_1, \cdots, x_n, f_1, \cdots, f_j).$$
\end{defn}
\begin{defn}(a $P$-system) A $P$-system in $\mathbb R^n$  is any system $Q_1 = \cdots = Q_m$ of equations in which each $Q_p$ are polynomials of coordinate functions in $\mathbb R^n$ and functions of a $P$-chain. The \emph{complexity} of a $P$-system is the following collection of numbers: 
$$\text{$n$, the length $k$ of the $P$-chain, and the degrees of the polynomials $Q_p$ and $P_{ij}$}.$$
\end{defn}
Here is a theorem due to Hovanski\u{\i} \cite[Theorem 1]{Hov}:
\begin{thm}[Hovanski\u{\i}'s theorem]\label{hov}
The number of nondegenerate roots of a $P$-system consisting of $n$ equations in $\mathbb R^n$ is finite and bounded from above by an explicitly given function of the complexity of the $P$-system.
\end{thm}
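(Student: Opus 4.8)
\medskip
\noindent\textbf{Proof plan.}
The plan is to follow Hovanski\u{\i}'s original argument, whose two engines are a multidimensional generalization of Rolle's theorem and an induction on the length $k$ of the Pfaffian chain. First I would carry out routine reductions. Clearing denominators, one may assume the $P$-system has the form $Q_1=\dots=Q_n=0$ with each $Q_p$ a polynomial in the coordinates $x_1,\dots,x_n$ and in the chain functions $f_1,\dots,f_k$, the ambient region being all of $\mathbb R^n$ (or a coordinate box). A nondegenerate root persists under small perturbations of the coefficients of the $Q_p$, and such perturbations do not alter the complexity; so, bounding the roots of a generic nearby system and letting it return to the given one, it suffices to prove the estimate under the assumption that every auxiliary incidence variety occurring below is smooth of the expected dimension, which is arranged by Sard's theorem.

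The base case $k=0$ is polynomial, and the number of nondegenerate \emph{real} roots is at most the number of complex roots, bounded by $\prod_p\deg Q_p$ by Bezout's theorem. For the inductive step one lowers the chain length by one at the cost of adjoining a variable. Introduce a new coordinate $u$ on $\mathbb R^{n+1}$ and replace $f_k$ by a function $F$ on $\mathbb R^{n+1}$ vanishing exactly on the graph $\{u=f_k(x)\}$; the decisive point is that, since $\partial f_k/\partial x_i=P_{ik}(x,f_1,\dots,f_k)$, this graph is an integral leaf of the one-form $du-\sum_i P_{ik}(x,f_1,\dots,f_{k-1},u)\,dx_i$, so that $F$ and all its partial derivatives are expressible through the \emph{shorter} chain $f_1,\dots,f_{k-1}$. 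Setting $\widetilde Q_p(x,u):=Q_p(x,f_1,\dots,f_{k-1},u)$, the original solutions correspond bijectively, and with preservation of nondegeneracy, to the zeros of $F$ on the Pfaffian curve $C:=\{\widetilde Q_1=\dots=\widetilde Q_n=0\}\subset\mathbb R^{n+1}$, defined with a chain of length $k-1$. Now apply the generalized Rolle theorem: on each connected component of $C$, between two consecutive zeros of $F$ lies a zero of the derivative $L_C F$ of $F$ along $C$, so the number of zeros of $F$ on $C$ is at most the number of connected components of $C$ plus the number of zeros of $L_C F$ on $C$. Since the tangent direction of $C$ is the exterior product of the gradients $\nabla\widetilde Q_p$ — again length-$(k-1)$ Pfaffian data — the function $L_C F$, after clearing the nowhere-vanishing denominator supplied by smoothness, is a polynomial in the coordinates and in $f_1,\dots,f_{k-1}$; hence $\{\widetilde Q_1=\dots=\widetilde Q_n=L_C F=0\}$ is a $P$-system of $n+1$ equations in $\mathbb R^{n+1}$ with chain length $k-1$, to which the inductive hypothesis applies.

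There remains the number of connected components of $C$, which I would bound Morse-theoretically: for a generic linear function $\ell$ on $\mathbb R^{n+1}$, each compact component of $C$ carries a local extremum of $\ell|_C$, so the number of components is at most the number of critical points of $\ell|_C$ plus a correction counting the non-compact components (each with one or two ends, their number itself controlled inductively by a lower-dimensional Pfaffian set). The critical points of $\ell|_C$ are the nondegenerate roots of $\widetilde Q_1=\dots=\widetilde Q_n=\det[\nabla\widetilde Q_1,\dots,\nabla\widetilde Q_n,\nabla\ell]=0$, once more a $P$-system of $n+1$ equations in $\mathbb R^{n+1}$ with chain length $k-1$. Thus the whole argument runs as a single induction on $k$, in which a root-counting problem at level $k$ in $\mathbb R^n$ is reduced to root-counting problems at level $k-1$ in $\mathbb R^{n+1}$ (used directly, and through Morse theory to count components of the intermediate curves); unwinding, a level-$k$ system in $\mathbb R^n$ becomes a polynomial system in $\mathbb R^{n+k}$. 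Recording at each step how $n$, $k$ and the degrees grow — they increase, but by explicitly computable amounts — yields the advertised bound as an explicit function of the complexity.

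The hardest part, and the genuinely analytic core, is the generalized Rolle theorem invoked above, in particular the control of the Pfaffian curve $C$ near infinity: for a non-compact $C^1$ curve Rolle's statement can fail unless one knows the curve decomposes into finitely many arcs and circles with finitely many ends in total, and proving such finiteness is of the very nature of the theorem being established — so the induction must be organized so that the needed finiteness at level $k$ is delivered by the component bound at level $k-1$. The secondary, bookkeeping-heavy difficulty is to propagate \emph{effective} estimates through every reduction (the passage $k\rightsquigarrow k-1$ raises the dimension by one and inflates the degrees in a way that must be tracked precisely), so that the final answer is a concrete function of $n$, $k$, $\deg Q_p$ and $\deg P_{ij}$ rather than a mere finiteness assertion.
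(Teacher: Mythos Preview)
The paper does not prove this theorem at all: it is stated as ``a theorem due to Hovanski\u{\i} \cite[Theorem 1]{Hov}'' and used as a black box to deduce Corollary~\ref{cor-1} and Corollary~\ref{f-local}. There is therefore nothing in the paper to compare your proposal against.

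That said, your sketch is a faithful outline of Hovanski\u{\i}'s original argument --- the Rolle-type lemma on Pfaffian curves, the induction on chain length that trades $k$ for an extra ambient dimension, and the Morse-theoretic component count --- and the difficulties you flag (finiteness of ends feeding back into the induction, and the bookkeeping needed to make the bound explicit) are exactly the right ones. For the purposes of this paper, however, none of this is needed: you may simply cite \cite{Hov} and move on.
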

\begin{lem} Let $Q(s)$ be a polynomial. Then the following functions on $\mathbb R^2$ with coordinates $s$ and $r$ form a Pfaffian chain.
$$f_1(s,r):=\frac{1}{Q(s)}, \quad f_2(s,r):=\log Q(s), \quad f_3(s,r):=Q(s)^r.$$
\end{lem}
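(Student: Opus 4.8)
The plan is to verify the Pfaffian chain condition directly from the definition, by computing all first partial derivatives of $f_1$, $f_2$, $f_3$ with respect to the two coordinates $s$ and $r$ and checking that each such derivative is a polynomial in $s$, $r$, and the functions $f_1,\dots,f_j$ preceding (and including) the one being differentiated. Since $Q(s)$ is a polynomial, write $Q'(s)$ for its derivative, which is again a polynomial in $s$.

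First I would handle $f_1(s,r) = 1/Q(s)$. We have $\partial f_1/\partial r = 0$, which is a polynomial in the coordinates alone, and $\partial f_1/\partial s = -Q'(s)/Q(s)^2 = -Q'(s)\,f_1^2$, which is a polynomial in $s$ and $f_1$; so the required polynomial $P_{11}$ exists with $j=1$. Next, for $f_2(s,r) = \log Q(s)$: again $\partial f_2/\partial r = 0$, and $\partial f_2/\partial s = Q'(s)/Q(s) = Q'(s)\,f_1$, a polynomial in $s$ and $f_1$ (hence in $s$ and the first two functions of the chain, as required for $j=2$). Finally, for $f_3(s,r) = Q(s)^r = e^{r\log Q(s)} = e^{r f_2}$: differentiating in $r$ gives $\partial f_3/\partial r = \log Q(s)\cdot Q(s)^r = f_2\, f_3$, and differentiating in $s$ gives $\partial f_3/\partial s = r\,Q'(s)\,Q(s)^{r-1} = r\,Q'(s)\,f_1\,f_3$; both are polynomials in $s$, $r$ and $f_1, f_2, f_3$, which is exactly the condition for the third function of the chain. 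This establishes that $(f_1,f_2,f_3)$ is a Pfaffian chain of length $3$ on $\mathbb R^2$.

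There is one point that deserves a word of care, namely the domain of definition: $f_1$, $f_2$, $f_3$ as written require $Q(s) \neq 0$ (indeed $Q(s) > 0$ for $\log Q(s)$ and for $Q(s)^r$ to make sense as real analytic functions), so strictly speaking these are analytic functions on the open subset of $\mathbb R^2$ where $Q(s) > 0$ rather than on all of $\mathbb R^2$. In the application of Lemma to $\frak{pp}_{\mathbb R}(X;1)$ one takes $Q = P_X$, which satisfies $P_X(t) \ge 1$ for $t \ge 1$ (indeed $P_X(t) > 1$ under the standing assumption of Remark \ref{remark-0}(1)), so the relevant region is the half-plane $s \ge 1$ where everything is well defined and the Pfaffian identities hold verbatim. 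Since Hovanski\u{\i}'s theorem and the notion of a $P$-chain are local/semilocal in nature, this restriction of domain causes no difficulty.

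The computation is entirely routine — there is no real obstacle here; the only thing one must be attentive to is the bookkeeping of which functions are allowed to appear in $P_{ij}$ for each $j$ (the $j$-th derivative may only involve $f_1,\dots,f_j$), and I have arranged the ordering $f_1 = 1/Q(s)$, $f_2 = \log Q(s)$, $f_3 = Q(s)^r$ precisely so that $1/Q(s)$ appears before $\log Q(s)$ (needed to express $(\log Q)'$ without dividing) and $\log Q(s)$ appears before $Q(s)^r$ (needed to express $\partial_r Q(s)^r$). The only mild subtlety worth flagging is that the variable $r$ is a genuine coordinate function on $\mathbb R^2$, so its appearance in $\partial f_3/\partial s = r Q'(s) f_1 f_3$ is legitimate; this is why the ambient space is taken to be $\mathbb R^2$ with coordinates $(s,r)$ rather than $\mathbb R^1$.
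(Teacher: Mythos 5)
Your proof is correct and follows essentially the same route as the paper: a direct computation of all six partial derivatives, expressing each as a polynomial in $s$, $r$ and the preceding chain functions, with the same ordering $f_1=1/Q$, $f_2=\log Q$, $f_3=Q^r$. Your additional remark about restricting to the region where $Q(s)>0$ is a reasonable (and harmless) clarification that the paper leaves implicit.
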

\begin{proof} It is straightforward, but for the sake of convenience of the reader, we check it. Indeed the derivatives become as follows:
\begin{enumerate}
\item 
\begin{enumerate}
\item $\frac{\partial f_1}{\partial s} = -Q(s)^{-2}Q'(s) = - f_1^2 Q'(s)$. Hence, by letting $P_{11}(s,r,u_1):=-u_1^2Q'(s)$,
we have $\frac{\partial f_1}{\partial s}=P_{11}(s,r,f_1)$. 
\item $\frac{\partial f_1}{\partial r} =0$. Hence, by letting $P_{12}(s,r,u_1)=0$, we have $\frac{\partial f_1}{\partial r} =P_{12}(s,r,f_1)=0$
\end{enumerate}
\item 
\begin{enumerate}
\item $\frac{\partial f_2}{\partial s} = \frac{Q'(s)}{Q(s)}= f_1Q'(s)$. Hence, by letting $P_{}(s,r,u_1,u_2):=u_1Q'(s)$,
we have $\frac{\partial f_2}{\partial s}=P_{11}(s,r,f_1,f_2)$. 
\item
$\frac{\partial f_2}{\partial r} =0$. Hence, by letting $P_{22}(s,r,u_1,u_2)=0$, we have $\frac{\partial f_2}{\partial r} =P_{12}(s,r,f_1,f_2)=0$.
\end{enumerate}
\item 
\begin{enumerate}
\item $\frac{\partial f_3}{\partial s} = rQ(s)^{r-1}Q'(s)= rf_3f_1Q'(s)$. Hence, by letting $P_{31}(s,r,u_1,u_2,u_3):=ru_1u_3Q'(s)$,we have $\frac{\partial f_3}{\partial s}=P_{31}(s,r,f_1,f_2,f_3)$. 
\item 
$\frac{\partial f_3}{\partial r} =Q(s)^r \log Q(s)=f_3 f_2$. Hence, by letting $P_{32}(s,r,u_1,u_2,u_3)=u_2u_3$, we have $\frac{\partial f_3}{\partial r} =P_{32}(s,r,f_1,f_2,f_3)=0$.
\end{enumerate}
\end{enumerate}
\end{proof}
\begin{lem} Let $P(s)$ and $Q(s)$ be polynomials and let $R(s,r):=rP(s) -Q(s)^r.$ Let $f_1,f_2,f_3$ be the above Pfaffian chain. Then the following system is a P-system:
\begin{equation}\label{p-system}
R(s,r)=\frac{\partial R}{\partial s} =0.
\end{equation}
\end{lem}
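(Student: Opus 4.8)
The plan is to verify directly, from the definitions, that both equations in (\ref{p-system}) are polynomial expressions in the coordinate functions $s,r$ and in the functions $f_1,f_2,f_3$ of the Pfaffian chain established in the previous lemma. This is the whole content of the claim: a $P$-system is by definition \emph{any} system $Q_1=\cdots=Q_m$ whose left-hand sides are polynomials in the coordinates and in the members of a fixed $P$-chain, so once each of $R$ and $\partial R/\partial s$ is exhibited in that form, nothing further is needed.

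First I would treat $R(s,r)=rP(s)-Q(s)^r$ itself. Here $P(s)$ and $Q(s)$ are ordinary polynomials, $r$ is a coordinate function, and $Q(s)^r=f_3(s,r)$ is the third member of the chain; hence $R(s,r)=rP(s)-f_3$, which is manifestly a polynomial in $s$, $r$ and $f_3$ (indeed linear in $r$ and in $f_3$). So the first equation $R=0$ is of the required shape with $Q_1(s,r,u_1,u_2,u_3):=rP(s)-u_3$.

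Next I would compute $\partial R/\partial s$. Differentiating, $\frac{\partial R}{\partial s}=rP'(s)-\frac{\partial}{\partial s}\bigl(Q(s)^r\bigr)=rP'(s)-rQ(s)^{r-1}Q'(s)$. Using the chain and the identity $Q(s)^{r-1}=Q(s)^r\cdot Q(s)^{-1}=f_3\,f_1$ (which is exactly the relation recorded as $\partial f_3/\partial s=r f_3 f_1 Q'(s)$ in the preceding lemma), we get $\frac{\partial R}{\partial s}=rP'(s)-r\,f_1 f_3\,Q'(s)$, a polynomial in $s,r,f_1,f_3$. Thus the second equation is of the required form with $Q_2(s,r,u_1,u_2,u_3):=rP'(s)-r\,u_1 u_3\,Q'(s)$, where $P'$ and $Q'$ are the (polynomial) derivatives of $P$ and $Q$. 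Since each $Q_p$ is a polynomial in the coordinate functions and in $f_1,f_2,f_3$, and $f_1,f_2,f_3$ form a $P$-chain, the system (\ref{p-system}) is a $P$-system by definition, completing the proof.

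There is no real obstacle here: the statement is essentially bookkeeping, and the only point requiring a moment's care is to recognize that $\partial(Q(s)^r)/\partial s$ should be rewritten using $f_1$ and $f_3$ rather than introducing $Q(s)^{r-1}$ as a new function, so that the expression stays polynomial in the members of the chain already fixed. One might also remark, for the sake of the intended application of Hovanski\u{\i}'s theorem, that the complexity of this $P$-system is bounded explicitly in terms of $n=2$, the chain length $k=3$, and $\deg P$ and $\deg Q$; but that observation is not needed for the statement as phrased.
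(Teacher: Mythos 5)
Your proposal is correct and is essentially identical to the paper's own proof: both write $R=rP(s)-f_3$ and $\partial R/\partial s=rP'(s)-rf_1f_3Q'(s)$, then exhibit the polynomials $Q_1(s,r,u_1,u_2,u_3)=rP(s)-u_3$ and $Q_2(s,r,u_1,u_2,u_3)=rP'(s)-ru_1u_3Q'(s)$ so that the system is a $P$-system by definition. Nothing is missing.
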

\begin{proof}
$$R(s,r) = rP(s) -f_3,$$
$$\frac{\partial R}{\partial s}= rP'(s) -rQ(s)^{r-1}Q'(s) = rP'(s) -rf_3f_1Q'(s).$$
Hence, by letting $Q_1(s,r,u_1,u_2,u_3):=rP(s)-u_3, Q_2(s,r,u_1,u_2,u_3):=rP'(s) -ru_1u_3Q'(s)$, we have
$R(s,r)=Q_1(s,r,f_1,f_2,f_3)$ and $\frac{\partial R}{\partial s}=Q_2(s,r,f_1,f_2,f_3).$ Therefore (\ref{p-system}) is a $P$-system.
\end{proof}
Hence, from the above Hovanski\u{\i}'s theorem we get the following corollary:
\begin{cor}\label{cor-1} The above $P$-system $R(s,r)=\displaystyle \frac{\partial R}{\partial s} =0$ has only finitely many solutions.
\end{cor}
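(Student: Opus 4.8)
The plan is to deduce Corollary \ref{cor-1} directly from Hovanski\u{\i}'s theorem (Theorem \ref{hov}) applied to the $P$-system (\ref{p-system}), which was already verified to be a $P$-system in the preceding lemma. First I would note that (\ref{p-system}) consists of exactly two equations, $R(s,r)=0$ and $\partial R/\partial s=0$, in the two variables $(s,r)\in\mathbb R^2$; this is precisely the situation "$n$ equations in $\mathbb R^n$'' with $n=2$ required by Theorem \ref{hov}. Since the functions $f_1(s,r)=1/Q(s)$, $f_2(s,r)=\log Q(s)$, $f_3(s,r)=Q(s)^r$ form a Pfaffian chain of length $k=3$ (previous lemma), and the polynomials $Q_1,Q_2$ exhibiting $R$ and $\partial R/\partial s$ as polynomials in the coordinates and the chain functions have explicit bounded degrees in terms of $\deg P$ and $\deg Q$, the complexity of the $P$-system is finite. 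Hovanski\u{\i}'s theorem then bounds the number of \emph{nondegenerate} roots by an explicit function of this complexity, so in particular that number is finite.

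The one point requiring a word of care is the passage from "nondegenerate roots'' in the statement of Theorem \ref{hov} to "solutions'' in the statement of Corollary \ref{cor-1}. Strictly speaking Hovanski\u{\i}'s bound counts roots at which the Jacobian of the system is nonsingular, i.e.\ isolated transverse intersections. I would address this in one of two ways. The cleanest is simply to interpret "solutions'' in Corollary \ref{cor-1} as the nondegenerate (transverse) solutions, which is all that is needed for the application in \S 5: there the relevant points are the local maxima and minima of the implicit curve $r(t)$ defined by $R(t,r)=0$, and a local extremum of $r$ along a smooth branch of $R=0$ is exactly a point of $\{R=0,\ \partial R/\partial s=0\}$ at which the curve is, generically, transversally cut. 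Alternatively, one invokes the general fact (also due to Hovanski\u{\i}, for Pfaffian/o-minimal structures) that a $P$-system has finitely many \emph{connected components} of its solution set, not merely finitely many nondegenerate points; this immediately gives finiteness of isolated solutions and is the version actually used downstream.

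The main obstacle — such as it is — is therefore not any hard estimate but simply this bookkeeping about what "finitely many solutions'' means, together with checking that all data of the $P$-system genuinely have the bounded-degree form Theorem \ref{hov} demands. The latter was essentially done in the proof of the previous lemma: $R=Q_1(s,r,f_1,f_2,f_3)$ with $Q_1=rP(s)-u_3$, and $\partial R/\partial s=Q_2(s,r,f_1,f_2,f_3)$ with $Q_2=rP'(s)-ru_1u_3Q'(s)$, and the Pfaffian relations $P_{ij}$ have degrees controlled by $\deg Q$. So the proof is essentially one line: the system (\ref{p-system}) is a $P$-system of finite complexity, hence by Theorem \ref{hov} has only finitely many nondegenerate solutions. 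I would write it as exactly that, perhaps adding the remark that the explicit bound depends only on $\deg P$ and $\deg Q$, so that in the geometric application it depends only on the formal dimension of $X$ and the number of homotopy generators.
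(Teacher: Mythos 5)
Your proposal is correct and follows essentially the same route as the paper, which deduces the corollary immediately from Hovanski\u{\i}'s theorem applied to the $P$-system verified in the preceding lemmas. Your additional remark distinguishing nondegenerate roots from arbitrary solutions is a legitimate point of care that the paper itself passes over silently, but it does not change the argument.
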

The solutions $(s,r)$ of $R(s,r)=0$ are of the following types:
For a fixed $s$, the solutions $r$ of $R(s,r)=0$ are the intersection of the straight line $y=P(s)x$ and the exponential function $y=Q(s)^x$, hence we have three cases
\begin{enumerate}
\item there are exactly two different solutions $r_1(s)$ and $r_2(s)$ in the case when they intersect at two different points $(s,r_1(s))$, $(s,r_2(s))$,
\item there is just one solution $r_0(s)$ in the case when the line is tangent to the exponential curve at the point $(s,r_0(s))$,
\item there is no solution in the case when they do not intersect.
\end{enumerate}
So we define the implicit function $r(s)$ of the equation $R(s,r(s))=0$ by
\begin{equation*}
r(s) = \begin{cases}
\op{max}\{r_1(s), r_2(s)\}, &\text{if they intersect at two different points $(s,r_1(s))$, $(s,r_2(s))$,}\\
r_0(s), & \text{if the line is tangent to the exponential curve} \\
& \hspace{6cm}\text{at one point $(s,r_0(s))$,}\\
\text{not defined}, & \text{if they do not intersect.}\\
\end{cases}
\end{equation*}
From the above Corollary \ref{cor-1} we get the following corollary:
\begin{cor}\label{f-local} Let the situation be as above. The implicit function $r(s)$ defined by $R(s,r(s))=0$ has finitely many local maxima-minima.
\end{cor}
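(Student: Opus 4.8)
The plan is to reduce the statement to the finiteness of solutions of the $P$-system (\ref{p-system}), so that Corollary \ref{cor-1} --- equivalently Hovanski\u{\i}'s Theorem \ref{hov} --- applies directly. The key observation is that a local maximum or minimum of $r(s)$ occurs either at a point where $r(s)$ is differentiable, in which case its derivative vanishes and implicit differentiation pins the point down, or at one of finitely many points where $r(s)$ fails to be real analytic.

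First I would record the local structure of the implicit function. Let $D$ be the open set of $s$ for which the line $y=P(s)x$ meets the curve $y=Q(s)^x$ in two distinct points, and write $r_1(s)<r_2(s)$ for the two roots, so that $r(s)=r_2(s)$ on $D$. Since $r\mapsto rQ(s)^{-r}$ attains its maximum at $r=1/\log Q(s)$, the two solutions of $rP(s)=Q(s)^r$ straddle this value; and evaluating at a root, where $rP(s)=Q(s)^r$, one finds
\begin{equation*}
\frac{\partial R}{\partial r}(s,r)=P(s)\bigl(1-r\log Q(s)\bigr),
\end{equation*}
which is nonzero on each branch (negative on the $r_2$-branch, positive on the $r_1$-branch). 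Hence each $r_i$ is real analytic by the implicit function theorem, the two branches can coalesce only where $\partial R/\partial r=0$, and therefore $r(s)=r_2(s)$ is real analytic on all of $D$ with no branch switching. The points of the domain of $r$ lying outside $D$ are precisely those where the line is tangent to the exponential curve; equating values and derivatives there forces $r=1/\log Q(s)$ and hence $P(s)=e\log Q(s)$, so the exceptional set is contained in the zero set of $P(s)-e\log Q(s)$. This is finite: $P(s)-e\log Q(s)$ is a Pfaffian function (built from the $P$-chain $1/Q(s),\log Q(s)$ on $\mathbb R$), so its zeros form a $P$-system in $\mathbb R$ and are finite by Theorem \ref{hov}; alternatively, $P$ has degree $\ge 2$ while $\log Q$ grows only logarithmically, so the difference is eventually positive and one concludes by real analyticity on the remaining compact range.

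Next, suppose $r(s)$ has a local extremum at an interior point $s_0$ of $D$. Then $r'(s_0)=0$, and differentiating the identity $R(s,r(s))\equiv 0$ gives
\begin{equation*}
\frac{\partial R}{\partial s}\bigl(s_0,r(s_0)\bigr)+\frac{\partial R}{\partial r}\bigl(s_0,r(s_0)\bigr)\,r'(s_0)=0,
\end{equation*}
so $\partial R/\partial s$ vanishes at $(s_0,r(s_0))$. Together with $R(s_0,r(s_0))=0$ this exhibits $(s_0,r(s_0))$ as a solution of the $P$-system (\ref{p-system}), of which there are only finitely many by Corollary \ref{cor-1}. Since every local maximum or minimum of $r(s)$ is either such an interior critical point or one of the finitely many exceptional points of the first step, $r(s)$ has only finitely many local maxima and minima.

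The step I expect to be the main obstacle is the first one. One must verify that the points where $r(s)$ is not real analytic --- the coalescences of the two branches and the endpoints of the domain --- are genuinely finite in number, and, more importantly, one must resist arguing ``by real analyticity'' alone: a real analytic function on a half-line can have infinitely many critical points, so the required finiteness is a \emph{global} phenomenon, and it is precisely here that the Pfaffian structure of $R$ together with Hovanski\u{\i}'s theorem is indispensable rather than merely convenient. If one prefers to bypass the elementary growth estimate for the exceptional set, one can instead note that each tangency point satisfies the auxiliary $P$-system $R=\partial R/\partial r=0$ and invoke Theorem \ref{hov} a second time, observing that the degenerate solutions of that system already lie on (\ref{p-system}).
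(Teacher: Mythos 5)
Your proposal is correct and follows essentially the same route as the paper: at an interior local extremum one has $r'(s_0)=0$, so implicit differentiation of $R(s,r(s))=0$ forces $\frac{\partial R}{\partial s}(s_0,r_0)=0$, exhibiting the point as a solution of the $P$-system (\ref{p-system}), which is finite by Corollary \ref{cor-1}. Your additional verifications --- that $\frac{\partial R}{\partial r}=P(s)\bigl(1-r\log Q(s)\bigr)\neq 0$ on the upper branch (so the implicit function theorem applies and there is no branch switching), and that the tangency/endpoint set $\{P(s)=e\log Q(s)\}$ is finite --- are sound refinements of details the paper's shorter proof leaves implicit, not a different argument.
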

\begin{proof} If we consider the derivative $\frac{dr}{ds}(s_0)$ of the implicit function $r=r(s)$ at $s_0$ with $r_0=r(s_0)$ such that $R(s_0,r_0)=0$, we do have
$$\frac{\partial R}{\partial s}(s_0,r_0) + \frac{\partial R}{\partial r}(s_0,r_0) \frac{dr}{ds}(s_0) =0.$$
So, if $r(s)$ has a local maximum or minimum at $s_0$, then $\frac{dr}{ds}(s_0)=0$, thus $\frac{\partial R}{\partial s}(s_0,r_0)=0$. 
Namely, $(s_0,r_0)$ has to be a solution of the above $P$-system. Therefore, the implicit function $r(s)$ defined by $R(s,r(s))=0$ has only finitely many local maxima-minima. 
\end{proof}

Before going further on, we point out that the above implicit function $r(s)$ can be described in terms of the Lambert $W$-function. The inverse function of $z=we^w$ is called \emph{the Lambert $W$-function} (cf. \cite{Corless-etal} for the history and some
applications) and denoted by $W(z)$, thus it satisfies
\begin{equation*}\label{trans-eq}
z = W(z)e^{W(z)}.
\end{equation*}
Let $\widetilde W(z)$ be the inverse function of $z={e^w \over w}$, which
satisfies   
\begin{equation*} z={e^{\widetilde W (z)} \over {\widetilde W(z)}}.
\end{equation*}
This function $\widetilde W(z)$ is a specialization of 
\emph{the generalized Lambert $W$-function}
proposed in \cite{Scott-etal} and studied further in
\cite[\S 3]{JMM}.
The implicit function $r(s)$ of our equation
\begin{equation*}
r(s)P(s) = Q(s)^{r(s)}
\end{equation*}
is explicitly expressed by using the above function $\widetilde
W(z)$. Specifically, one has
\begin{align*}
P(s) & = \frac{Q(s)^{r(s)}}{r(s)}\\
& = \frac{e^{(\log Q(s))r(s)}}{r(s)}.
\end{align*}
From which we have
$$\frac{P(s)}{\log Q(s)}= \frac{e^{(\log Q(s))r(s)}}{(\log Q(s))r(s)}.$$
Hence we have
$$\widetilde W \left (\frac{P(s)}{\log Q(s)} \right ) = (\log Q(s))r(s).$$
Therefore the implicit function $r(s)$ is obtained as
$$r(s) = \frac{1}{\log Q(s)} \widetilde W \left (\frac{P(s)}{\log Q(s)} \right ) .$$

Finally we notice that $z =W(z)e^{W(z)}$ implies that
$$- \frac{1}{z}= W \left(-\frac{1}{z} \right) e^{W \left(-\frac{1}{z} \right) }, \quad \text{i.e.,} \quad z= \frac{e^{-W \left(-\frac{1}{z} \right) }}{- W \left(-\frac{1}{z} \right) }.$$
Hence we have
$$\widetilde W(z) = - W \left(-\frac{1}{z} \right).$$
Therefore we have that
$$r(s) = -\frac{1}{\log Q(s)} W \left (-\frac{\log Q(s)}{P(s)} \right ).$$ 

So far there is no assumption on the coefficients of the polynomials $P(s)$ and $Q(s)$. From here on, we assume that they are respectively the homotopical Poincar\'e polynomial and homological Poincar\'e polynomial of a simply connected elliptic space, in particular we have that $Q(s)=1 +\cdots.$ 
\begin{pro} Let $P(s)$ and $Q(s)$ be as above. The above implicit function $r=r(s)$ of the equation 
$rP(s) =Q(s)^r$ is a bounded function.
\end{pro}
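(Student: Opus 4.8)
The plan is to split the range $s\ge 1$ into a compact interval $[1,M_{0}]$ and a ray $[M_{0},\infty)$, bound $r(s)$ separately on the two pieces, and take the larger of the two bounds. Two facts are used throughout. First, since $Q(s)=1+\sum_{k=2}^{q}b_{k}s^{k}$ with $b_{q}\ne 0$ and $q\ge 2$, we have $Q(s)\ge 2$, hence $\log Q(s)\ge\log 2>0$, for every $s\ge 1$. Second, by the trichotomy recorded just before Corollary \ref{f-local}, wherever $r(s)$ is defined it is a positive root of $rP(s)=Q(s)^{r}$ --- the larger root when the line $y=P(s)x$ and the curve $y=Q(s)^{x}$ meet in two points, and the tangential value otherwise --- so it suffices to bound these particular roots.

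First I would record an elementary estimate valid at every $s\ge 1$ at which $r(s)$ is defined. Putting $x=r(s)\log Q(s)>0$ and using $e^{x}\ge x^{2}/2$, the defining equation $r(s)P(s)=Q(s)^{r(s)}=e^{x}$ gives $r(s)P(s)\ge\tfrac12\bigl(r(s)\log Q(s)\bigr)^{2}$, so, dividing by $r(s)>0$,
\begin{equation*}
r(s)\ \le\ \frac{2P(s)}{\bigl(\log Q(s)\bigr)^{2}}\ \le\ \frac{2P(s)}{(\log 2)^{2}}.
\end{equation*}
Since $P$ is a polynomial, the right-hand side is bounded on the compact interval $[1,M_{0}]$ by $2P(M_{0})/(\log 2)^{2}$. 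This settles the compact piece, but not the ray, because the bound grows like $s^{\op{deg}P}$.

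For the ray I would use convexity. Fix an integer $C>\op{deg}P/\op{deg}Q$ and, for fixed $s$, set $g_{s}(r):=Q(s)^{r}-rP(s)$; this is strictly convex, $g_{s}(0)=1$, and its unique minimizer is $r^{*}(s)=\frac{1}{\log Q(s)}\log\frac{P(s)}{\log Q(s)}$. When $g_{s}$ has two positive zeros $r_{1}(s)<r_{2}(s)$ they straddle $r^{*}(s)$ and $r(s)=r_{2}(s)$, so if $g_{s}(C)\ge 0$ and $C\ge r^{*}(s)$, then monotonicity of $g_{s}$ on $[r^{*}(s),\infty)$ forces $r(s)=r_{2}(s)\le C$ (in the tangential case $r(s)=r^{*}(s)\le C$ directly, and when $g_{s}$ has no positive zero $r(s)$ is undefined). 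Now $g_{s}(C)=Q(s)^{C}-CP(s)$ has positive leading coefficient because $C\op{deg}Q>\op{deg}P$, hence $g_{s}(C)\to+\infty$; and $r^{*}(s)\to\op{deg}P/\op{deg}Q<C$ as $s\to\infty$, because $\log P(s)\sim(\op{deg}P)\log s$, $\log Q(s)\sim(\op{deg}Q)\log s$ and $\log\log Q(s)=o(\log s)$. So there is an $M_{0}$ with $g_{s}(C)\ge 0$ and $r^{*}(s)<C$ for every $s\ge M_{0}$, and on that ray $r(s)\le C$.

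Combining the two ranges gives $r(s)\le\max\bigl\{\,2P(M_{0})/(\log 2)^{2},\ C\,\bigr\}$ for every $s\ge 1$ in the domain of $r$, which is the assertion. The step I expect to be the main obstacle is recognizing that the easy estimate of the second paragraph is genuinely insufficient as $s\to\infty$, so that one must instead exploit the stabilization of the larger root --- in fact $r(s)\to\op{deg}P/\op{deg}Q$, which one could alternatively read off from $r(s)=-\frac{1}{\log Q(s)}W\!\bigl(-\frac{\log Q(s)}{P(s)}\bigr)$ together with the asymptotics of the branch $W_{-1}$ near $0^{-}$, although the convexity argument above sidesteps those asymptotics. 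The only additional care needed is to keep track of which value of $r(s)$ --- larger root or tangential value --- is being bounded in each regime, and these are precisely the cases enumerated before Corollary \ref{f-local}.
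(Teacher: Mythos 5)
Your proof is correct, and although it uses the same decomposition into a compact interval plus a ray as the paper, the arguments on both pieces are genuinely different. On the ray the paper simply invokes (the proof of) \cite{LY}, Lemma 2.1, producing $s_0$ and an integer $N_0$ with $rP(s)<Q(s)^r$ for all $s\ge s_0$, $r\ge N_0$, so that the graph of $r(s)$ avoids the quadrant $\{s\ge s_0,\ r\ge N_0\}$; your convexity argument with $g_s(r)=Q(s)^r-rP(s)$, its minimizer $r^*(s)=\frac{1}{\log Q(s)}\log\frac{P(s)}{\log Q(s)}$, and the degree comparison $C\deg Q>\deg P$ is self-contained and, as a bonus, gives the quantitative conclusion $\limsup_{s\to\infty}r(s)\le \deg P/\deg Q$, consistent with the paper's later remark that $\lim_{s\to\infty}r(s)\le 2$ for $S^{2n}$, where $\deg P/\deg Q=(4n-1)/(2n)$. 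On the compact piece the paper argues by continuity and compactness: it passes to the two-variable function $\widetilde r(x,y)$ (largest root of $rx=y^r$), shows that its domain inside the rectangle $[m_P,M_P]\times[m_Q,M_Q]$ is the closed set $\{x\ge e\log y\}$ (using $m_Q>1$), and concludes boundedness from continuity of $\widetilde r$ on a compact set; your inequality $e^x\ge x^2/2$ instead yields the explicit pointwise bound $r(s)\le 2P(s)/(\log Q(s))^2\le 2P(s)/(\log 2)^2$, which bypasses any discussion of the domain of $\widetilde r$ and its closedness. The only implicit assumption you make is $P\not\equiv 0$, needed for $r^*(s)$ to be defined, and it is harmless: if $P\equiv 0$ the implicit function has empty domain and there is nothing to prove. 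What the paper's route buys is brevity given machinery it has already established; what yours buys is an elementary, self-contained argument with explicit bounds and sharper asymptotic information.
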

\begin{proof}  First we point out that the domain of the implicit function $r=r(s)$ is not necessarily the whole half interval $[1, \infty)$, thus there are some intervals of $[1, \infty)$, where the implicit function is not defined, in other words, if we denote the domain of the implicit function by $\mathcal D(r(s))$, then it is possible that $\mathcal D(r(s)) \subsetneqq [1,\infty)$. However, \emph{even if there are some interval where the implicit function is not defined, we still consider the function is bounded where it is not defined.} It follows from the proof of \cite[Lemma 2.1]{LY} that there exists $s_0 \ge 1$ and an integer $N_0$ such that
\begin{equation*}
rP(s) < Q(s)^r \quad \text{for} \, \forall s \ge s_0, \forall r \ge N_0 .
\end{equation*}
Thus the graph of the implicit function $r=r(s)$ does not intersect the translated quadrant $\{(s,r) \, | \, s \ge s_0, r \ge N_0\}$, in other words, the implicit function $r=r(s)$ is bounded above by $N_0$ on the half interval $[s_0, \infty)$. Now we want to show that it is also bounded on the closed interval $[1,s_0]$ as well. Now let us consider the implicit function $\widetilde r (x,y)$
of two variables $x,y$, given by the largest solution to 
$rx=y^r$. Here we note that
\begin{equation}\label{relation-r}
r(s)=\widetilde r(P(s),Q(s)).
\end{equation}
We let 
$$m_P:=\min_{1 \le s \le s_0}P(s), M_P:=\max_{1 \le s \le s_0}P(s), \quad m_Q:=\min_{1 \le s \le s_0}Q(s), M_Q:=\max_{1 \le s \le s_0}Q(s).$$
Then we consider
a compact rectangle 
$$\mathcal R_{m,M}:=\{(x,y) |\, \, m_P\le x \le M_P, m_Q \le y \le M_Q\}.$$
The claim is that $\widetilde r(x,y)$ is defined on a compact subset of this
rectangle $\mathcal R_{m,M}$, which implies that 
$\widetilde r(x,y)$ is bounded, since it is a continuous function. Therefore it follows from (\ref{relation-r}) that the implicit function $r=r(s)$ is also bounded on $[1,s_0]$.
 Since
the domain $\mathcal D$ of $\widetilde r(x,y)$ is a subset of the compact bounded set $\mathcal R_{m,M}$, it is enough to show that domain $\mathcal D$ is
closed. Let $\phi(y)$ be the slope of the line in $(u,v)$-plane 
containing the origin and tangent to the curve given by $u=y^v$.
$\phi(y)$ is of course computed as follows:
The tangent line of the curve $u=y^v$ at a point $(v_0,u_0=y^{v_0})$ is given by
$u-y^{v_0} = y^{v_0}\log y (v -v_0).$
If this line goes through the origin, then it follows from $-y^{v_0} = y^{v_0}\log y \, (-v_0)$ that 
$v_0 \log y =1,$ hence $v_0= \frac {1}{\log y}$. Hence the slope $\phi(y)$ is
$$\phi(y) = y^{\frac {1}{\log y}}\log y = e \log y.$$
Thus $\phi(y)$  is a continuous function for $y > 1$. 
We assume that $Q(s) = 1 + \cdots$ and is not a constant polynomial, 
hence $\displaystyle \min_{1 \le s \le s_0}Q(s)=m_Q >1$.
The domain of the function $\widetilde r(x,y)$ considered on the rectangle $\mathcal R_{m,M}$ 
is the subset given by $x \ge \phi(y)$, i.e., $\mathcal D:=\{(x,y) \, | \, \, (x,y) \in \mathcal R_{m,M}, x \ge \phi(y) \}$.
 Continuity of $\phi$ implies that the domain $\mathcal D$
is closed. \\
\end{proof}
\begin{pro}\label{pro-r(s)} Let the situation be as above.
The finite critical points $s$ of the implicit function $r=r(s)$ satisfy the following equation
\begin{equation}\label{equ-0}
\frac{P'(s)Q(s)}{Q'(s)}= Q(s)^{\frac{P'(s)Q(s)}{P(s)Q'(s)}}.
\end{equation}
\end{pro}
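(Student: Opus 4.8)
The plan is to reduce the claim to the $P$-system already isolated in the proof of Corollary~\ref{f-local}, and then eliminate the variable $r$ in two steps. Suppose $s_0$ is a finite critical point of the implicit function $r=r(s)$ and set $r_0:=r(s_0)$. As in the proof of Corollary~\ref{f-local}, implicit differentiation of $R(s,r(s))=0$ together with $\frac{dr}{ds}(s_0)=0$ forces $\frac{\partial R}{\partial s}(s_0,r_0)=0$, so $(s_0,r_0)$ is a solution of the $P$-system~(\ref{p-system}). Since $R(s,r)=rP(s)-Q(s)^r$, writing this system out says that $s_0$ and $r_0$ satisfy simultaneously
\begin{equation*}
r_0 P(s_0)=Q(s_0)^{r_0}, \qquad r_0 P'(s_0)=r_0\,Q(s_0)^{r_0-1}Q'(s_0).
\end{equation*}

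Next I would solve this pair for $r_0$. Under the standing hypotheses (by Remark~\ref{remark-0} the space $X$ is not rationally contractible, so both $P$ and $Q-1$ are nonzero polynomials with non-negative coefficients, and we work on $[1,\infty)$), one has $Q(s_0)>1$, $Q'(s_0)>0$, $P(s_0)>0$ at $s_0\ge 1$, while $r_0>0$ because $r_0$ is the larger of the two positive abscissae at which the line $y=P(s_0)x$ meets the curve $y=Q(s_0)^x$. Dividing the second equation by $r_0$ and multiplying through by $Q(s_0)/Q'(s_0)$ gives
\begin{equation*}
Q(s_0)^{r_0}=\frac{P'(s_0)Q(s_0)}{Q'(s_0)},
\end{equation*}
and combining this with the first equation $r_0 P(s_0)=Q(s_0)^{r_0}$ yields $r_0=\dfrac{P'(s_0)Q(s_0)}{P(s_0)Q'(s_0)}$.

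Finally I would substitute this value of $r_0$ back into the displayed identity for $Q(s_0)^{r_0}$, obtaining exactly equation~(\ref{equ-0}),
\begin{equation*}
\frac{P'(s_0)Q(s_0)}{Q'(s_0)}=Q(s_0)^{\frac{P'(s_0)Q(s_0)}{P(s_0)Q'(s_0)}}.
\end{equation*}
I do not expect a real obstacle: the argument is a short elimination. The only points worth checking are (i) the non-vanishing of $r_0$, $P(s_0)$ and $Q'(s_0)$, which legitimizes the divisions and follows from the remarks above, and (ii) the tacit assumption that a finite critical point is a point where $r(s)$ is actually differentiable and $\frac{\partial R}{\partial r}\ne 0$ — but on the upper branch $r(s)=\max\{r_1(s),r_2(s)\}$ one sees $\frac{\partial R}{\partial r}(s,r(s))=P(s)-Q(s)^{r(s)}\log Q(s)<0$, so the implicit function theorem applies on the interior of the domain of $r(s)$, differentiability failing only at the boundary tangency points, which are not interior critical points.
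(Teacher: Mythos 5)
Your proposal is correct and follows essentially the same route as the paper: both reduce the critical-point condition to the pair $rP(s)=Q(s)^r$, $P'(s)=Q(s)^{r}Q'(s)/Q(s)$ (you via the $P$-system $R=\partial R/\partial s=0$, the paper via direct implicit differentiation with $dr/ds=0$), then eliminate $r$ to get $r(s)=\frac{P'(s)Q(s)}{P(s)Q'(s)}$ and substitute back to obtain (\ref{equ-0}). Your added remarks on the legitimacy of the divisions and on applicability of the implicit function theorem on the upper branch are correct refinements of points the paper leaves tacit.
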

\begin{proof}
Let $r=r(s)$ be the implicit function of the equation $rP(s)=Q(s)^r$.
Differentiating $rP(s)=Q(s)^r$ with respect to $s$, we have
$$\frac{dr}{ds}P(s) +r P'(s) = Q(s)^r \left (\frac{dr}{ds} \log Q(s) + r \frac{Q'(s)}{Q(s)} \right ).$$
Hence, letting $\frac{dr}{ds}=0$, we have
$$rP'(s) =r Q(s)^r \, \frac{Q'(s)}{Q(s)}. $$
Since $r \not =0$, we have
$$P'(s) =Q(s)^r \, \frac{Q'(s)}{Q(s)}.$$
Therefore, the local maxima $r(s)$ satisfies the following two equations:
\begin{equation*}
{} \begin{cases}
rP(s)=Q(s)^r, & \\
P'(s) =Q(s)^r \, \frac{Q'(s)}{Q(s)}. &\\
\end{cases}
\end{equation*}
From this, we get
$$P'(s) =rP(s) \, \frac{Q'(s)}{Q(s)},$$
namely we get
\begin{equation*}\label{equ-1}
r(s) = \frac{P'(s)Q(s)}{P(s)Q'(s)}.
\end{equation*}
Plugging it in the equation $rP(s)=Q(s)^r$, we get (\ref{equ-0}) above.
\end{proof}
The domain $\mathcal D(r(s)) = \{ s \in [1, \infty) \, | \, P(s) \ge e\log Q(s) \}$ of the implicit function $r=r(s)$ is 
\begin{equation}\label{dom}
\mathcal D(r(s)) =  [s_1, s_2]  \sqcup [s_3,s_4] \cdots \sqcup [s_{k-2},s_{k-1}] \sqcup [s_k, \infty),
\end{equation}
where $s_2, \cdots, s_k$ are the zeros of the equation $P(s) = e\log Q(s)$ such that $s_2 <s_3 < \cdots <s_k$ and $s_1=1$ or $s_1$ is another zero of $P(s) = e\log Q(s)$, depending on the polynomials $P(s)$ and $Q(s)$.
\begin{cor} Let $P(s)$ and $Q(s)$ be as above. Let $r=r(s)$ be the implicit function defined by the equation $rP(s) =Q(s)^r$, let $s_1, \cdots,s_k$ be the end points of the closed intervals of the domain $\mathcal D(r(s))$ of the implicit function $r=r(s)$ as in (\ref{dom}), and let $c_1, \cdots, c_m$ be the finite critical points \footnote{It is enough to consider only the critical points whose values are local maxima, but the formula (\ref{max-formula}) is, of course, not affected by considering all the critical points.} of the implicit function $r=r(s)$. 
Then we have
$$\frak{sth}\left (P(s),Q(s);1 \right)= \max \{r(s_1), \cdots, r(s_k), r(c_1), \cdots, r(c_m), \lim_{s \to \infty}r(s) \}.$$
In particular, for a a simply connected elliptic space $X$, we have
\begin{equation}\label{max-formula}
\frak{pp}_{\mathbb R}(X;1) = \max \{r(s_1), \cdots, r(s_k), r(c_1), \cdots, r(c_m), \lim_{s \to \infty}r(s) \}.
\end{equation}
\end{cor}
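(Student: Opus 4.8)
The plan is to prove the identity in two stages: first identify the real stabilization threshold with the supremum of the implicit function $r(s)$ over its domain $\mathcal D(r(s))$, and then show that this supremum is realised at one of the interval endpoints $s_1,\dots,s_k$ of (\ref{dom}), at one of the critical points $c_1,\dots,c_m$, or in the limit $s\to\infty$. For the first stage I would fix $s\ge 1$ and use that $Q(s)=1+\cdots>1$ makes $x\mapsto Q(s)^x$ strictly convex, equal to $1$ at $x=0$, and dominant over the line $x\mapsto P(s)x$ both near $x=0$ and as $x\to\infty$; hence $\{x>0\mid xP(s)\ge Q(s)^x\}$ is either empty, a single point, or a closed interval $[r_1(s),r_2(s)]$, whose right endpoint is $r(s)$ when $s\in\mathcal D(r(s))$. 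Consequently, for a fixed $s$, the condition ``$qP(s)<Q(s)^q$ for all $q\ge r$'' holds precisely when $r>r(s)$ (and holds vacuously when $s\notin\mathcal D(r(s))$), so that
$$\frak{sth}_{\mathbb R}(P(s),Q(s);1)=\sup_{s\in\mathcal D(r(s))} r(s)=:r^{*},$$
and $r^{*}$ is finite because the preceding Proposition shows $r(s)$ is bounded.

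Next I would collect the properties of $r(s)$ needed to turn this supremum into a maximum. Applying the implicit function theorem to $R(s,r)=rP(s)-Q(s)^r$, and noting that along the upper branch $\partial R/\partial r=P(s)-Q(s)^{r(s)}\log Q(s)$ is nonzero away from the tangency points---and that these tangency points are exactly the endpoints $s_1,\dots,s_k$ of the intervals in (\ref{dom})---one gets that $r(s)$ is continuous on $\mathcal D(r(s))$ and $C^{\infty}$ on its interior. From the computation in the proof of Corollary \ref{f-local}, any $s$ with $dr/ds=0$ satisfies $\partial R/\partial s(s,r(s))=0$, so $(s,r(s))$ solves the $P$-system (\ref{p-system}); by Corollary \ref{cor-1} there are only finitely many such $s$, namely $c_1,\dots,c_m$. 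Finally, from the closed form $r(s)=\tfrac{1}{\log Q(s)}\widetilde W\!\left(\tfrac{P(s)}{\log Q(s)}\right)$ together with $\widetilde W(z)\sim\log z$ as $z\to\infty$ and $\log P(s)\sim(\deg P)\log s$, $\log Q(s)\sim(\deg Q)\log s$ as $s\to\infty$, the limit $L:=\lim_{s\to\infty}r(s)=\deg P/\deg Q$ exists.

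Then I would localise $r^{*}$ using the decomposition (\ref{dom}): $\mathcal D(r(s))$ is a finite union of compact intervals together with the unbounded interval $[s_k,\infty)$. On each compact piece the continuous function $r$ attains its maximum either at an endpoint (some $s_i$) or at an interior point, which is then a local maximum, hence one of the $c_j$. On $[s_k,\infty)$, set $M:=\sup_{[s_k,\infty)}r$; if $M=L$ there is nothing more to do, and if $M>L$ I would pick $S$ with $r(s)<\tfrac{M+L}{2}$ for all $s\ge S$, so that $M=\max_{[s_k,S]}r$ is attained at some $s^{*}\in[s_k,S)$, which is either the endpoint $s_k$ or an interior point with $dr/ds(s^{*})=0$, hence one of the $c_j$. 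Taking the maximum over all pieces of $\mathcal D(r(s))$ and over $L$ yields
$$r^{*}=\max\{r(s_1),\dots,r(s_k),r(c_1),\dots,r(c_m),\lim_{s\to\infty}r(s)\},$$
which together with the first stage proves the formula; the statement for $\frak{pp}_{\mathbb R}(X;1)$ is the special case $P=P^{\pi}_X$, $Q=P_X$ (with $P_X=1+\cdots$ as required).

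The routine parts are the convexity dichotomy of the first stage and the compactness argument on the bounded intervals. The hard part will be the unbounded interval $[s_k,\infty)$, where compactness is unavailable and one must combine three separate inputs---boundedness of $r(s)$ (the preceding Proposition), existence of $\lim_{s\to\infty}r(s)$ via the generalized Lambert function and its asymptotics, and finiteness of the critical set via Hovanski\u{\i}'s theorem (Corollary \ref{cor-1})---to trap the supremum among the listed quantities. A secondary subtlety is the bookkeeping of strict versus non-strict inequalities and of the infimum in the definition of $\frak{sth}_{\mathbb R}$ in the first stage, and verifying that the implicit function theorem degenerates only at the interval endpoints $s_1,\dots,s_k$ and not at interior points, so that $r(s)$ is genuinely differentiable wherever the critical-point argument is applied.
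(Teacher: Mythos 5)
Your proposal is correct and follows essentially the route the paper intends: the paper states this corollary without a separate proof, as the assembly of exactly the ingredients you use — the fixed-$s$ convexity dichotomy identifying $\frak{sth}_{\mathbb R}$ with $\sup_{s\in\mathcal D(r(s))}r(s)$, boundedness of $r(s)$ from the preceding Proposition, finiteness of the critical set from Corollary \ref{f-local} (via Hovanski\u{\i}), and the domain decomposition (\ref{dom}). Your only addition is an explicit verification (via the asymptotics of $\widetilde W$ on the upper branch) that $\lim_{s\to\infty}r(s)$ exists, which the paper tacitly assumes in the statement, so this strengthens rather than deviates from the argument.
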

\begin{rem} We do not know if $\frak{pp}_{\mathbb R}(X;1)$ is attained at an end point $s_j$ of these closed intervals, at a critical point $c_k$, or as the limit $\lim_{s \to \infty}r(s)$. In the case of $X=S^{2n}$, it is not obtained as $\lim_{s \to \infty}r(s)$, as shown in the following section.
\end{rem}
As computed in the following section, we have that $\frak{pp}_{\mathbb R}(S^{2n+1};1)=1$. Except the case of $S^{2n+1}$, we make the following conjecture:
\begin{con} Let $X$ be a simply connected elliptic space. If $X$ is not homotopic to $S^{2n+1}$, then the real stabilization threshold $\frak{pp}_{\mathbb R}(X;1)$ is a transcendental number. In particular, if $\frak{pp}_{\mathbb R}(X;1)$ is attained at a critical point $s$ of the implicit function $r(s)$, then the following number is a transcendental number:
$$
\frac{P'(s)Q(s)}{P(s)Q'(s)}
$$
where $P(s)=P^{\pi}_X(s)$ and $Q(s)=P_X(s)$. 
\end{con}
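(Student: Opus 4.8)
The plan is to combine the structural formula (\ref{max-formula}) with classical results in transcendence theory. Write $P(s)=P^{\pi}_X(s)$ and $Q(s)=P_X(s)$; recall that $Q$ has non-negative integral coefficients with $Q(0)=1$, while $P$ has non-negative integral coefficients and $P(0)=0$. By (\ref{max-formula}),
\begin{equation*}
\rho:=\frak{pp}_{\mathbb R}(X;1)=\max\{\,r(s_1),\dots,r(s_k),\ r(c_1),\dots,r(c_m),\ L\,\},\qquad L:=\lim_{s\to\infty}r(s),
\end{equation*}
and comparing leading terms in the identity $r(s)\log Q(s)=\log r(s)+\log P(s)$ (using that $r(s)$ is bounded) gives $L=p/q\in\Q$, where $p=\op{deg} P$ and $q=\op{deg} Q$ equals the formal dimension of $X$. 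So $\rho$ is realized either as this limit, at an endpoint $s_j$ of $\mathcal D(r(s))$, or at a critical point $c_i$; assuming $\rho\in\overline{\Q}$ I would rule out the last two possibilities outright and show that the first forces $X$ to be rationally homotopy equivalent to an odd sphere.

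\emph{Endpoint case.} At an endpoint $s_j$ the line $y=P(s_j)x$ is tangent to $y=Q(s_j)^x$, so the tangency computation already carried out (in the proof of the boundedness of $r(s)$) gives $P(s_j)=e\log Q(s_j)$ and $r(s_j)=1/\log Q(s_j)$. Then $\rho=1/\log Q(s_j)$ yields $Q(s_j)=e^{1/\rho}$ and $P(s_j)=e/\rho$, so $s_j$ is a common root of $P(X)-e/\rho$ and $Q(X)-e^{1/\rho}$, whence $G(e/\rho,e^{1/\rho})=0$ for the non-zero polynomial $G(\alpha,\beta):=\op{Res}_X(P(X)-\alpha,Q(X)-\beta)\in\Z[\alpha,\beta]$, whose zero locus is exactly the image curve $\Gamma=\{(P(s),Q(s))\}$. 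If $\rho$ were irrational, then $1,1/\rho$ are $\Q$-linearly independent, so $e$ and $e^{1/\rho}$ are algebraically independent by the Lindemann--Weierstrass theorem, forcing $G(e/\rho,e^{1/\rho})\neq0$ — a contradiction. If $\rho=a/b$ with $\gcd(a,b)=1$, then $(e/\rho,e^{1/\rho})$ is a generic point of the irreducible plane curve $\{b^{b}\beta^{a}=a^{b}\alpha^{b}\}$, so that curve is contained in, hence equals, $\Gamma$; this gives the polynomial identity $Q(s)^{a}=\rho^{b}P(s)^{b}$, impossible by comparing constant terms ($Q(0)^a=1$ while $\rho^b P(0)^b=0$). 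Thus $\rho$ is transcendental whenever it is attained at an endpoint.

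\emph{Critical point case.} At a critical point $c_i$ one has $\rho=r(c_i)=\dfrac{P'(c_i)Q(c_i)}{P(c_i)Q'(c_i)}$ and $\rho P(c_i)=Q(c_i)^{\rho}$ by Proposition~\ref{pro-r(s)}. The rational function $R_1:=P'Q/(PQ')\in\Q(X)$ is non-constant (constancy would force $P=\lambda Q^{c}$, incompatible with $P(0)=0$, $Q(0)=1$), so if $\rho\in\overline{\Q}$ then $c_i$, being a root of $R_1(X)-\rho$, is algebraic; hence $Q(c_i)$ and $R_2(c_i):=P'(c_i)Q(c_i)/Q'(c_i)$ are algebraic, and both are $\neq0,1$ (positivity of the polynomials for $s>1$, $Q(c_i)\ge Q(1)\ge2$, and $R_2(c_i)=1$ would give $\rho=0$). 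The relation $\rho\log Q(c_i)=\log R_2(c_i)$ is then a $\overline{\Q}$-linear dependence among $1$, $\log Q(c_i)$, $\log R_2(c_i)$ with not all coefficients zero; by Baker's theorem on linear forms in logarithms of algebraic numbers this forces $\log Q(c_i)$ and $\log R_2(c_i)$ to be $\Q$-proportional, and therefore $\rho\in\Q$. This also disposes of the displayed ``in particular'' statement, since at a critical point $s$ the value $\frac{P'(s)Q(s)}{P(s)Q'(s)}$ equals $r(s)=\rho$, hence is transcendental once the main assertion is known.

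What remains — and what I expect to be the genuine obstacle — is to exclude a \emph{rational} threshold when $X\not\simeq S^{2n+1}$: the sub-case $\rho=L=p/q$, and the sub-case where $\rho=a/b$ is attained at a finite critical point $c_i$, which is then a common root of $bP'Q-aPQ'$ and $a^{b}P^{b}-b^{b}Q^{a}$ in $\Q[X]$ and also the location of the global maximum of $r(s)$. The plan would be to prove that each of these configurations forces $X$ to be rationally homotopy equivalent to some $S^{2n+1}$ — for which, as computed in \S 6, the threshold equals $1=p/q$, is realized only as $\lim_{s\to\infty}r(s)$, and is not attained at any finite critical point — contradicting the hypothesis. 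Transcendence theory (Lindemann--Weierstrass, Baker, Gelfond--Schneider) kills every algebraic \emph{irrational} candidate, but a rational threshold is a Diophantine property of the integer polynomial pair $(P^{\pi}_X,P_X)$ rather than a transcendence phenomenon — even Schanuel's conjecture gives nothing here — so this last step seems to require a classification result: conjecturally, the simply connected rationally elliptic homotopy types with $\frak{pp}_{\mathbb R}(X;1)\in\Q$ are exactly those of the odd spheres. This can be verified for the families of \S 3, \S 4 and \S 6, but is open in general, which is why the statement is phrased as a conjecture.
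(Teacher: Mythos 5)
The statement you are asked to prove is labelled as a \emph{conjecture} in the paper: the authors offer no proof of it, only the motivating remark that the expression of the threshold through values of transcendental functions (the Lambert $W$-function) together with the theorems of Baker and Gelfond--Schneider \emph{suggests} transcendence. So there is no argument in the paper to compare yours against, and your proposal, by your own admission, does not close the statement either. Your partial analysis is sound in spirit and goes somewhat beyond the paper's heuristic: splitting $\frak{pp}_{\mathbb R}(X;1)$ according to the formula (\ref{max-formula}) into the limit value $\lim_{s\to\infty}r(s)=p/q$, the tangential endpoint values $1/\log Q(s_j)$, and the critical values $P'(s)Q(s)/(P(s)Q'(s))$, and then using Lindemann--Weierstrass at tangential endpoints and Baker (or, more simply, Gelfond--Schneider applied to $Q(c_i)^{\rho}=P'(c_i)Q(c_i)/Q'(c_i)$ with $c_i$ algebraic) at critical points, does correctly reduce the problem to excluding a \emph{rational} threshold. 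The ``in particular'' clause is, as you say, immediate from Proposition \ref{pro-r(s)} once the main assertion is granted.

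The genuine gap is exactly the one you flag, and it is the heart of the conjecture: nothing in transcendence theory rules out $\frak{pp}_{\mathbb R}(X;1)\in\Q$ for a space not rationally equivalent to an odd sphere. Note also that your ``endpoint case'' silently assumes tangency at every endpoint of $\mathcal D(r(s))$, whereas the left endpoint may be $s_1=1$ with no tangency; there $\rho P(1)=Q(1)^{\rho}$ involves only integers and admits rational solutions (e.g.\ $2\cdot 2=2^2$ for $S^{2n}$ at $s=1$), so this sub-case too falls on the unresolved rational side rather than being dispatched by Lindemann--Weierstrass. Since excluding rational thresholds would amount to a classification-type statement about which elliptic rational homotopy types realize $\frak{pp}_{\mathbb R}(X;1)\in\Q$ --- precisely what is open --- your text should be presented as evidence for, and a reduction of, the conjecture, not as a proof of it.
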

Similarly to Definition \ref{mhp-defn} and Definition \ref{real-pp}, one can define  
the following real stabilization threshold: 
\begin{defn}
$$\frak{mhp}_{\mathbb R }(X;[\varepsilon,r], [\varepsilon,r], [\varepsilon,r]):=\inf \{ r \in \mathbb R \vert
qMH^{\pi}_X(t,u,v)<MH_X(t,u,v)^q, \forall q \ge r, \forall (t,u,v) \in \mathscr C_{\varepsilon, r} \}.$$
\end{defn}
A natural question is if the real stabilization threshold $\frak{mhp}_{\mathbb R }(X;[\varepsilon,r], [\varepsilon,r], [\varepsilon,r])$ stabilizes 
for $r \rightarrow \infty$, namely if the following real stabilization threshold 
$$\frak{mhp}_{\mathbb R }(X;\varepsilon,\varepsilon,\varepsilon):=\inf\{ r\in \mathbb R \vert
qMH^{\pi}_X(t,u,v)<MH_X(t,u,v)^q, \forall q \ge r, \forall (t,u,v) \in \mathscr C_{\varepsilon, \infty}\}$$
 is well-defined. We have the following proposition, showing that
this is the case under certain condition. 
\begin{pro} \label{pro-real-mhp} Let $P(t,u,v)$ and $Q(t,u,v)$ be two polynomials with positive
integer coefficients such that there exist polynomials $\phi(t,u,v),\widetilde
P(s)$ and $\widetilde Q(s)$ such that 
\begin{itemize}
\item $P(t,u,v) \le \tilde P \left (\phi(t,u,v) \right )$,
\item $\tilde Q \left (\phi(t,u,v) \right ) \le Q(t,u,v)$,
\item $\exists \, \, \eta$ such that $\phi(t,u,v) \ge \eta>0, \forall (t,u,v) \in \mathscr C_{\varepsilon, \infty}= [\varepsilon,\infty)^3$.
\end{itemize} 
Then there exist 
$r_0 \in \RR$ such that 
for $\forall (t,u,v) \in [\varepsilon, \infty)^3$ and 
$\forall r \ge r_0$  one has 
$$rP(t,u,v) <Q(t,u,v)^r.$$
In particular, if there exist such polynomials for the homotopical and homological mixed Hodge polynomials $P(t,u,v):=MH^{\pi}_X(t,u,v)$ and $Q(t,u,v):=MH_X(t,u,v)$, then the real stabilization threshold 
$\frak{mhp}_{\mathbb R }(X;\varepsilon,\varepsilon,\varepsilon)$
is well defined. 
\end{pro}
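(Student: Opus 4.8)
The plan is to use the two sandwich inequalities to reduce the three-variable comparison on the unbounded region $[\varepsilon,\infty)^3$ to a one-variable comparison on the half-line $[\eta,\infty)$, where the tools already in hand apply. All polynomials occurring here have nonnegative coefficients and are evaluated at positive arguments, so every value is nonnegative and, for each real $r\ge 0$, the map $x\mapsto x^{r}$ is nondecreasing on $[0,\infty)$. Hence for $(t,u,v)\in[\varepsilon,\infty)^3$ and $r\ge 0$ the hypotheses give
\begin{equation*}
rP(t,u,v)\ \le\ r\,\widetilde P\bigl(\phi(t,u,v)\bigr),\qquad Q(t,u,v)^{r}\ \ge\ \widetilde Q\bigl(\phi(t,u,v)\bigr)^{r}.
\end{equation*}
It therefore suffices to produce a real number $r_0\ge 1$ such that
\begin{equation*}
r\,\widetilde P(s)\ <\ \widetilde Q(s)^{r}\qquad\text{for all }s\ge\eta\text{ and all }r\ge r_0;
\end{equation*}
indeed, putting $s=\phi(t,u,v)\ge\eta$ then gives $rP(t,u,v)\le r\widetilde P(s)<\widetilde Q(s)^{r}\le Q(t,u,v)^{r}$ for all $(t,u,v)\in[\varepsilon,\infty)^3$ and $r\ge r_0$, which is the asserted inequality. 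Applying this with $P:=MH^{\pi}_X$ and $Q:=MH_X$ shows that $\frak{mhp}_{\mathbb R}(X;\varepsilon,\varepsilon,\varepsilon)$ is well defined.

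For the one-variable claim I would argue as in the proof of the preceding Proposition, now over $[\eta,\infty)$ rather than $[1,\infty)$. Here one uses that $\widetilde Q$ is nonconstant with $\widetilde Q(s)>1$ for all $s\ge\eta$ --- automatic when $\widetilde Q(s)=1+\cdots$ has nonnegative coefficients and is nonconstant, which is the case for $MH_X$ since its constant term is $1$. By the proof of \cite[Lemma~2.1]{LY} there are $s_0\ge\eta$ and a real $N_0$ with $r\widetilde P(s)<\widetilde Q(s)^{r}$ for all $s\ge s_0$ and all $r\ge N_0$. On the complementary compact interval $[\eta,s_0]$ put $c_1:=\min_{\eta\le s\le s_0}\widetilde Q(s)>1$ and $M_1:=\max_{\eta\le s\le s_0}\widetilde P(s)$; since $c_1>1$ there is $r_1$ with $c_1^{r}>rM_1$ for all $r\ge r_1$, whence $r\widetilde P(s)\le rM_1<c_1^{r}\le\widetilde Q(s)^{r}$ on $[\eta,s_0]$ for $r\ge r_1$. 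Taking $r_0:=\max\{1,N_0,r_1\}$ establishes the claim. One could equally invoke Lemma~\ref{general thm} with $\varepsilon:=\eta$ for the tail, after a routine passage from integers to reals using $\widetilde Q(s)\ge c_1>1$.

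The main obstacle is the reduction itself: Lemma~\ref{general thm} and Theorem~\ref{mhp} only control the comparison on compact cubes $\mathscr C_{\varepsilon,r}$, whereas here $t,u,v$ range independently over $[\varepsilon,\infty)$ and can make the ratio $rP/Q^{r}$ large at infinity in many incompatible ways. The point of the hypothesis on $\phi$ is exactly that it funnels all of this asymptotic behavior through the single polynomial quantity $s=\phi(t,u,v)$, which the third bullet keeps bounded away from $0$, after which the one-variable argument takes over with no new input. The real work in any given geometric situation is to exhibit such $\phi,\widetilde P,\widetilde Q$ --- typically with $\phi$ a suitable monomial in $t,u,v$ --- which the abstract statement deliberately leaves aside.
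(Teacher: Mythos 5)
Your proposal is correct and follows essentially the same route as the paper: reduce to the one-variable comparison $r\widetilde P(s)<\widetilde Q(s)^r$ on $[\eta,\infty)$ via the sandwich $rP\le r\widetilde P(\phi)<\widetilde Q(\phi)^r\le Q^r$, using monotonicity of $x\mapsto x^r$. The only difference is that you spell out the finiteness of the one-variable threshold (tail estimate plus a compact-interval bound), which the paper leaves implicit by simply invoking $\frak{sth}_{\mathbb R}(\widetilde P,\widetilde Q;\eta)$; this is a harmless and slightly more careful elaboration, not a different argument.
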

\begin{proof} Indeed, let 
$r_0=\frak{sth}_{\mathbb R }\left (\widetilde P(s), \widetilde Q(s); \eta \right )$ be the real stabilization threshold for the
polynomials $\tilde P(s)$ and $\tilde Q(s)$, i.e.,
$\frak{sth}_{\mathbb R }\left (\widetilde P(s), \widetilde Q(s); \eta \right)= \inf\{r \vert \, q\widetilde P(s) < \widetilde Q(s)^q, \forall q \ge r, \forall s \ge \eta \}.$
Then for $r \ge r_0$ one has 
$r\widetilde P(s) < \widetilde Q(s)^r$ for $\forall s \ge \eta$ and hence for $\forall (t,u,v)
\in [\epsilon,\infty)^3$ one has 
$$rP(t,u,v) \le r\tilde P(\phi(t,u,v)) <\tilde Q(\phi(t,u,v))^r \le Q(t,u,v)^r.$$
Here we note that
$$\frak{sth}_{\mathbb R }\left (P(t,u,v) ,Q(t,u,v) ; \mathscr C_{\varepsilon, \infty} \right) \le \frak{sth}_{\mathbb R }\left (\widetilde P(s), \widetilde Q(s); \eta \right),$$
where we define
$$\frak{sth}_{\mathbb R }\left (P(t,u,v) ,Q(t,u,v) ; \mathscr C_{\varepsilon, \infty} \right):= \inf\{r \vert \, qP(t,u,v) < Q(t,u,v)^q, \forall q \ge r, \forall (t,u,v) \in \mathscr C_{\varepsilon, \infty} \}.$$
\end{proof}
\begin{ex}\label{ex-1}
Let us set (see (\ref{cpn-pi}) and (\ref{cpn-homo})):
$$P(t,u,v):=MH_{\CC\PP^n}^{\pi}(t,u,v) = t^2uv + t^{2n+1}(uv)^{n+1},$$
$$Q(t,u,v):=MH_{\CC\PP^n}(t,u,v) = 1+t^2uv + t^4(uv)^2 + \cdots + t^{2n}(uv)^n.$$
We let 
$$\phi(t,u,v)=t^2(uv), \, \widetilde P(s)=s+s^{n+1},\, \widetilde Q(s)=1+s+s^2+...+s^n.$$
Then one has 
\begin{itemize}
\item $MH_{\CC\PP^n}^{\pi}(t,u,v) <\widetilde P(\phi(t,u,v))$ for $\forall t \ge 1$,
\item $\widetilde Q(\phi(t,u,v)) =MH_{\CC\PP^n}(t,u,v)$ for $\forall t \ge 1$, 
\item  Moreover $\phi(t,u,v)=t^2(uv) \ge 1$ for $(t,u,v) \in \mathscr C_{1, \infty}=[1,\infty)^3$. 
\end{itemize}
Hence it follows from Proposition \ref{pro-real-mhp} that the real stabilization threshold $\frak{mhp}_{\mathbb R }(X;1,1,1)$ is well-defined.
\end{ex}
The following example shows that $\frak{sth}_{\mathbb R }\left (P(t,u,v) ,Q(t,u,v) ; \mathscr C_{\varepsilon, \infty} \right)$ does not necessarily exist.
\begin{ex} We consider the following modification of $P(t,u,v)$ of the above example:
$$P_1(t,u,v)=t^2uv+t^{2n+3}(uv)^{n+1}, \quad \widetilde P_1(s,t)=s+ts^{n+1}.$$
Then we have that $P_1(t,u,v)= \widetilde P_1(\phi(t,u,v), t)$ and $Q(t,u,v)$ be as in Example \ref{ex-1}.
We want to show that $\frak{sth}_{\mathbb R }\left (P(t,u,v) ,Q(t,u,v) ; \mathscr C_{\varepsilon, \infty} \right)$ does not exist.  A key observation is that the two-variable analog $r=r(x,y)$ of the one-variable implicit 
function $r=r(s)$ above, i.e., $r=r(x,y)$ is the largest one of the solutions $r$ of the equation $rx=y^r$, is
an unbounded function in the open region $x>0, y>1$. More precisely, 
for any $y_0>1$ and any $r_0>0$, let us consider any $x_0 > \left (\frac{y_0^{r_0}}{r_0} \right )$ and the line $v= x_0r$ in the plane of coordinates $(r,v)$. Then the largest $r$-coordinate of the two distinct intersections of this line $v= x_0r$ and the exponential curve $v=y_0^r$, i.e., $r(x_0,y_0)$ is bigger than $r_0$, namely, we have that $\exists \, \, r > r_0$ such that $rx_0=y_0^r$.
Therefore we have
$\{ r >0 \, \, \vert \, \, qx < y^q, \forall q >r, \forall x >0, \forall y>1 \} = \emptyset.$
In the above situation, for any $s_0>0$ and any $r_0>0$, let us consider $t_0 >0$ such that
$$\text{$s_0+t_0s_0^{n+1} > \frac{(1+s_0+\cdots +s_0^n)^{r_0}}{r_0}$, \, i.e., \, $t_0> \frac{(1+s_0+\cdots +s_0^n)^{r_0} -r_0s_0}{r_0s_0^{n+1}}$}.$$
Then for $\forall t>t_0$, the largest solution $r$ of the equation $r(s_0+ts_0^{n+1})=(1+s_0+...+s_0^n)^r$
is bigger than $r_0$, namely, $\exists \, \, r > r_0$ such that $r(s_0+ts_0^{n+1})=(1+s_0+...+s_0^n)^r$.
Thus we have
$\{ r >0 \, \, \vert \, \, q(s+ts^{n+1}) < (1+s+\cdots +s^n)^q, \forall q >r, \forall s >0, \forall t>0 \} = \emptyset.$
Therefore  $\frak{sth}_{\mathbb R }\left (P_1(t,u,v) ,Q(t,u,v) ; \mathscr C_{1, \infty} \right)$ does not exist.
\end{ex}
\section{$\frak{pp}_{\mathbb R}(S^{2n+1};1)$, $\frak{pp}_{\mathbb R}(S^{2n};1)$ and $\frak{pp}_{\mathbb R}(\mathbb CP^n;1)$}

In this section, we try to compute the real stabilization thresholds of $S^{2n+1}$, $S^{2n}$ and $\mathbb CP^n$.

By the same argument as in Example \ref{s-2n} we can show the following
\begin{lem}\label{lem-1} Let $P(t)$ and $Q(t)$ be polynomials with non-zero integral coefficients and suppose that $P(t)=a_2t^2+ \cdots$ and $Q(t)=1+b_2t^2 + \cdots + b_qt^q$ with $b_q \not =0$ (hence, $Q(t) \ge 2$ for $\forall t \ge 1$). If there exists a positive \emph{integer} $n_0$ such that $n_0P(t) < Q(t)^{n_0}$ for $\forall t \ge 1$, then for all \emph{integer} $n \ge n_0$ we have
$$nP(t) < Q(t)^n\quad (\forall t \ge 1).$$
\end{lem}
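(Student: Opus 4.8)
The plan is to prove the statement by induction on $n \ge n_0$, with the base case $n = n_0$ being exactly the hypothesis. So I would assume $nP(t) < Q(t)^n$ holds for all $t \ge 1$ for some integer $n \ge n_0$, and try to deduce $(n+1)P(t) < Q(t)^{n+1}$ for all $t \ge 1$. The natural move is to multiply the inductive hypothesis by $Q(t)$, which is legitimate since $Q(t) \ge 2 > 0$ for $t \ge 1$; this yields $nP(t)Q(t) < Q(t)^{n+1}$. Hence it suffices to show $(n+1)P(t) \le nP(t)Q(t)$, i.e., that $n P(t) Q(t) - (n+1)P(t) = P(t)\bigl(nQ(t) - (n+1)\bigr) \ge 0$ for all $t \ge 1$.

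The key point is then the elementary inequality $nQ(t) - (n+1) \ge 0$ for $t \ge 1$: since $n \ge n_0 \ge 1$ and $Q(t) \ge 2$ for $t \ge 1$, we get $nQ(t) \ge 2n = n + n \ge n + 1$. Combined with $P(t) \ge 0$ for $t \ge 1$ (all coefficients of $P$ are non-negative, and $t \ge 1 > 0$), this gives $P(t)\bigl(nQ(t)-(n+1)\bigr) \ge 0$, completing the induction step. One should note the inequality we obtain this way is $(n+1)P(t) \le n P(t) Q(t) < Q(t)^{n+1}$, and the strictness of the second inequality (inherited from the inductive hypothesis multiplied by the strictly positive $Q(t)$) is what matters; we do not need strictness in the first inequality.

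Thus I would structure the write-up as: (1) induction on $n$, base case $n=n_0$ is the hypothesis; (2) in the step, multiply the hypothesis by $Q(t) > 0$; (3) observe $P(t)(nQ(t)-(n+1)) \ge 0$ because $P(t) \ge 0$ and $nQ(t) \ge 2n \ge n+1$ for $t \ge 1$; (4) chain the two inequalities to conclude. There is no real obstacle here — the only thing to be careful about is bookkeeping of strict versus non-strict inequalities, and the explicit use of the two given facts $Q(t) \ge 2$ on $[1,\infty)$ and the non-negativity of the coefficients of $P$. This is precisely the ``same argument as in Example~\ref{s-2n}'' that the lemma's statement refers to, now carried out for general $P$ and $Q$ of the stated shape rather than for the specific $S^{2n}$ polynomials.
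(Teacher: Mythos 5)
Your proof is correct and is exactly the argument the paper intends: the paper proves the lemma ``by the same argument as in Example \ref{s-2n}'', namely induction on the exponent, multiplying the inductive inequality by $Q(t)$ and using $Q(t)\ge 2$ (so $nQ(t)\ge 2n\ge n+1$) together with $P(t)\ge 0$ on $[1,\infty)$. Your handling of the strict versus non-strict inequalities in the chain $(n+1)P(t)\le nP(t)Q(t)<Q(t)^{n+1}$ is also the right bookkeeping, so there is nothing to add.
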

In fact, the part ``for all \emph{integer} $n \ge n_0$" can be replaced by ``for all \emph{real number} $r \ge n_0$", i.e., we can show the following theorem.
\begin{thm}\label{int-real} Let $P(t)$ and $Q(t)$ be
as in Lemma \ref{lem-1}. If there exists a positive \emph{integer} $n_0$ such that $n_0P(t) < Q(t)^{n_0}$ for $\forall t \ge 1$, then for all \emph{real number} $r \ge n_0$ we have
$$rP(t) < Q(t)^r\quad (\forall t \ge 1).$$
\end{thm}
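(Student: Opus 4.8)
The plan is to leverage Theorem~\ref{int-real}'s hypothesis together with the already-established integer case (Lemma~\ref{lem-1}) and then interpolate between consecutive integers. Assume $n_0 P(t) < Q(t)^{n_0}$ for all $t \ge 1$. By Lemma~\ref{lem-1} we already know $n P(t) < Q(t)^n$ for every integer $n \ge n_0$. Now fix a real number $r \ge n_0$ and a value $t \ge 1$; write $r = n + \theta$ where $n := \lfloor r \rfloor \ge n_0$ and $\theta := r - n \in [0,1)$. We must show $(n+\theta) P(t) < Q(t)^{n+\theta}$.

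The key observation is that for $t \ge 1$ we have $Q(t) \ge 2 > 1$ (since $Q(t) = 1 + b_2 t^2 + \cdots$ with nonnegative coefficients and $b_q \neq 0$), so $Q(t)^{\theta} \ge 1$ because $\theta \ge 0$. The natural route is a convexity/weighted-AM argument: since $x \mapsto Q(t)^x$ is convex, for $\theta \in [0,1]$ one has $Q(t)^{n+\theta} \ge (1-\theta) Q(t)^n + \theta Q(t)^{n+1}$. Combining this with the two integer inequalities $n P(t) < Q(t)^n$ and $(n+1) P(t) < Q(t)^{n+1}$ (both valid since $n, n+1 \ge n_0$) gives
\begin{equation*}
Q(t)^{n+\theta} \ge (1-\theta) Q(t)^n + \theta Q(t)^{n+1} > (1-\theta) n P(t) + \theta (n+1) P(t) = (n+\theta) P(t),
\end{equation*}
which is exactly the desired strict inequality. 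One must handle the edge case $\theta = 0$ (i.e.\ $r$ an integer) separately, but that is immediate from Lemma~\ref{lem-1}. I would also double-check that the strict inequality survives the convex combination: since $\theta$ and $1-\theta$ are nonnegative and not both zero, at least one of the two strict inequalities is actually used with positive weight, so strictness is preserved — this needs a one-line remark when $\theta = 0$ versus $\theta > 0$.

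The main obstacle, such as it is, lies in being careful about which weight is strictly positive so that the strictness of the final inequality is genuinely justified rather than merely $\le$. When $0 < \theta < 1$ both weights are positive and there is nothing to worry about; when $\theta = 0$ we fall back on Lemma~\ref{lem-1} directly. A secondary point worth verifying is that $P(t) \ge 0$ for $t \ge 1$ (true, as $P$ has nonnegative coefficients), which ensures the convex-combination step on the right-hand side goes the correct direction. An alternative to the convexity argument — should one prefer to avoid invoking convexity of the exponential — is to note $Q(t)^{n+\theta} = Q(t)^n \cdot Q(t)^\theta \ge Q(t)^n \cdot (1 + \theta(Q(t)-1))$ using Bernoulli's inequality for the exponent $\theta \in [0,1]$, and then compare directly; but the convexity route is cleaner and I would present that one.
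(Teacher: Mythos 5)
There is a genuine error at the heart of your interpolation step: the convexity inequality is written in the wrong direction. Convexity of $x \mapsto Q(t)^x$ says that the graph lies \emph{below} its chords, i.e.
\begin{equation*}
Q(t)^{(1-\theta)n+\theta(n+1)} \le (1-\theta)\,Q(t)^{n} + \theta\, Q(t)^{n+1} \qquad (\theta \in [0,1]),
\end{equation*}
which is the opposite of the inequality $Q(t)^{n+\theta} \ge (1-\theta)Q(t)^n + \theta Q(t)^{n+1}$ that your chain requires. A concrete counterexample: $Q(t)=2$, $n=1$, $\theta=\tfrac12$ gives $2^{3/2}\approx 2.83 < 3 = \tfrac12\cdot 2 + \tfrac12 \cdot 4$. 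With the correct direction, your chain only yields that both $(n+\theta)P(t)$ and $Q(t)^{n+\theta}$ are bounded above by $(1-\theta)Q(t)^n+\theta Q(t)^{n+1}$, which gives no comparison between them. Your fallback via Bernoulli is reversed for the same reason: for exponents $\theta\in[0,1]$ one has $(1+x)^\theta \le 1+\theta x$, not $\ge$.

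The gap is also not merely cosmetic: a repaired interpolation of the form $Q(t)^{n+\theta}=Q(t)^n Q(t)^\theta > nP(t)\,Q(t)^\theta$ reduces the problem to $nQ(t)^\theta \ge n+\theta$, which works for $n\ge 2$ because $Q(t)\ge 2$ and $2^\theta \ge 1+\theta\log 2 \ge 1+\theta/2$, but fails when $n=1$ (at $t$ with $Q(t)$ close to $2$ one has $2^\theta<1+\theta$ for $0<\theta<1$). This is exactly why the paper cannot argue purely formally in the range $r\in[1,2)$: its proof uses monotonicity of $z\mapsto Q(t)^z/z$ for $z\ge 1/\log 2$ (Lemma \ref{r-lemma}), which settles every integer $n_0\ge 2$, and then treats $n_0=1$ by a separate case analysis exploiting the specific shape of $P$ and $Q$ (either $Q(1)\ge 3$, or $Q(t)=1+t^q$, forcing $P(t)=t^p$ with $p\le q$). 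Remark \ref{rem-123} in the paper shows that some such use of the structure of $P$ and $Q$ near $r=1$ is unavoidable. So your proposal as written does not prove the theorem, and even after correcting the direction of the interpolation it would still be missing the $r\in[n_0,n_0+1)$ case when $n_0=1$.
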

To prove this theorem, first we show the following lemma:
\begin{lem}\label{r-lemma} Suppose that for a positive real number $r_0$
$$r_0P(t)<Q(t)^{r_0} \quad \forall t \ge 1.$$
If $r_0 \ge \frac{1}{\log 2} \, 
(>1)$, where $\log$ is the natural logarithm, then for any real number $r \ge r_0$ the following holds: 
$$rP(t)<Q(t)^r\quad \forall t \ge 1.$$
\end{lem}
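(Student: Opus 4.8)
The plan is to fix $t\ge 1$ and reduce the assertion to a one‑variable monotonicity fact about the map $r\mapsto Q(t)^r/r$, after which the conclusion becomes immediate and uniform in $t$.

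First I would write $p:=P(t)$ and $q:=Q(t)$. Since $Q(t)=1+b_2t^2+\dots+b_qt^q$ with $b_q\neq 0$ and all $b_k\ge 0$, we have $q\ge 2$ for every $t\ge 1$; and we may assume $p>0$, since if $P(t)=0$ the inequality $rP(t)<Q(t)^r$ is trivial. For $r>0$ the desired inequality $rP(t)<Q(t)^r$ is equivalent to $p<h(r)$, where I set $h(r):=q^r/r$. Hence it suffices to prove that $h$ is nondecreasing on $[r_0,\infty)$: the hypothesis $r_0P(t)<Q(t)^{r_0}$ says exactly $p<h(r_0)$, and monotonicity then yields $p<h(r_0)\le h(r)$ for all $r\ge r_0$, i.e. $rP(t)<Q(t)^r$.

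The monotonicity is a routine derivative computation:
\begin{equation*}
h'(r)=\frac{q^r\bigl(r\log q-1\bigr)}{r^2},
\end{equation*}
so $h'(r)>0$ precisely when $r>1/\log q$. Because $q\ge 2$ we have $1/\log q\le 1/\log 2\le r_0$, and therefore $h'>0$ on $(r_0,\infty)$ except possibly at the single point $r=1/\log q$ (which occurs only if that point equals $r_0$). In every case $h$ is continuous and strictly increasing on $[r_0,\infty)$, so in fact $p<h(r_0)<h(r)$ for all $r>r_0$. Since this is carried out for an arbitrary $t\ge 1$, the conclusion $rP(t)<Q(t)^r$ holds for all $t\ge 1$ and all real $r\ge r_0$.

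I do not expect a genuine obstacle here; the one point requiring a word of care is the borderline case $q=Q(t)=2$ together with $r_0=1/\log 2$, where $h'(r_0)=0$. This can happen only at an isolated value of $t$ (for instance $t=1$ when $Q(t)=1+t^2$), and strictness is not lost: the strict inequality $p<h(r_0)$ is supplied by the hypothesis at $r_0$ itself, and $h$ is strictly increasing immediately to the right of $r_0$. Everything else is elementary calculus.
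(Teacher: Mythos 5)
Your proof is correct and follows essentially the same route as the paper: both reduce the claim to the monotonicity of the function $r\mapsto Q(t)^r/r$ for fixed $t\ge 1$, verified by the same derivative computation using $Q(t)\ge 2$ and $r_0\ge 1/\log 2$. Your added remarks (handling $P(t)=0$ and the borderline case $r_0=1/\log 2$ with $Q(t)=2$) are fine but not needed, since strictness already comes from the hypothesis at $r_0$ together with $h(r_0)\le h(r)$.
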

\begin{proof}   
$r_0P(t)<Q(t)^{r_0} (\forall t \ge 1) $ implies 
$P(t)<\frac{1}{r_0}Q(t)^{r_0} (\forall t \ge 1).$
So, for $r \ge r_0$, we have
$rP(t)<\frac{r}{r_0}Q(t)^{r_0} (\forall t \ge 1).$
So, if we can show $\frac{r}{r_0}Q(t)^{r_0} \le Q(t)^r (\forall  t \ge 1)$, then we are done.
In other words, we show that
$$\frac{1}{r_0}Q(t)^{r_0} \le \frac{1}{r}Q(t)^r \quad \forall t \ge 1.$$
To show this, let us consider the function $F(z):= \frac{1}{z}Q(t)^z.$
For a fixed $t (\ge 1)$, we have 
$$F'(z)= - \frac{1}{z^2}Q(t)^z + \frac{1}{z}\left (\log Q(t) \right )Q(t)^z
=\frac{1}{z} \left (\log Q(t) - \frac{1}{z} \right )Q(t)^z.$$
Since $Q(t) \ge 2$ (for all $t \ge 1$), $\log Q(t) \ge \log 2.$
If $z \ge \frac{1}{\log 2}$, then $\frac{1}{z} \le \log 2 \le \log Q(t).$
Hence we have
$$\log Q(t) - \frac{1}{z} \ge 0.$$
Therefore, for $z \ge \frac{1}{\log 2}$, we have
$F'(z) \ge 0  (\forall t \ge 1).$
Hence $z \ge \frac {1}{\log 2}$, $F(z)$ is non-decreasing, thus
for $\forall r \ge r_0$, we have $F(r_0) \ge F(r),$ i.e., 
$$\frac{1}{r_0}Q(t)^{r_0} \le \frac{1}{r}Q(t)^r \quad \forall x \ge 1.$$
\end{proof}
The following corollary follows from the above lemma:
\begin{cor}\label{coro-2} Let $P(t)$ and $Q(t)$ be  
as in Lemma \ref{lem-1}.  If there exists a positive \emph{integer} $n_0 \ge 2$ such that $n_0P(t) < Q(t)^{n_0}$ for $\forall t \ge 1$, then for all \emph{real number} $r \ge n_0$ we have
$$rP(t) < Q(t)^r\quad (\forall t \ge 1).$$
\end{cor}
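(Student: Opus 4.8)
The plan is to obtain this as an immediate consequence of Lemma~\ref{r-lemma}, which already contains all the analytic content; the corollary is essentially a bookkeeping step recording that an integer $n_0\ge 2$ is a legitimate starting exponent for the monotonicity mechanism of that lemma.

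First I would check the one numerical fact that is needed, namely $1/\log 2<2$. This is equivalent to $\log 2>1/2$, i.e.\ to $2>e^{1/2}=\sqrt e$, which holds since $\sqrt e<1.65<2$. Consequently every integer $n_0\ge 2$ satisfies $n_0\ge 1/\log 2$, so $n_0$ is an admissible value of the parameter $r_0$ appearing in Lemma~\ref{r-lemma}.

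Next I would simply invoke Lemma~\ref{r-lemma} with $r_0=n_0$: by hypothesis $n_0P(t)<Q(t)^{n_0}$ for all $t\ge 1$, and we have just verified $n_0\ge 1/\log 2$, so the lemma yields $rP(t)<Q(t)^r$ for every real $r\ge n_0$ and every $t\ge 1$, which is exactly the assertion of the corollary.

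There is no real obstacle here. The substance lies entirely in Lemma~\ref{r-lemma}, whose point is that for fixed $t\ge 1$ the function $F(z)=z^{-1}Q(t)^z$ is non-decreasing once $z\ge 1/\log 2$ (because $Q(t)\ge 2$ forces $\log Q(t)\ge\log 2$), so that the inequality $P(t)<F(r_0)$ propagates to $P(t)<F(r)$ for all $r\ge r_0$. An alternative route would be to use Lemma~\ref{lem-1} to get the inequality at all integers $n\ge n_0$ and then interpolate, but that still requires the same monotonicity argument, so the direct appeal to Lemma~\ref{r-lemma} is cleanest. The only place where the hypothesis $n_0\ge 2$ enters is precisely to clear the threshold $1/\log 2$; if one wanted to allow $n_0=1$ as well, as in Theorem~\ref{int-real}, one would first deduce $2P(t)<Q(t)^2$ from $P(t)<Q(t)$ via $Q(t)^2-2P(t)>Q(t)(Q(t)-2)\ge 0$, reducing to the case $n_0=2$ already treated, but this refinement is not needed for the corollary as stated.
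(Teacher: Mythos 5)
Your proposal is correct and is exactly the paper's (implicit) argument: the corollary is stated there as an immediate consequence of Lemma \ref{r-lemma}, applied with $r_0=n_0$ after noting that any integer $n_0\ge 2$ exceeds the threshold $1/\log 2\approx 1.44$. Your verification that $\log 2>1/2$ (i.e.\ $2>\sqrt{e}$) is precisely the bookkeeping the paper leaves to the reader.
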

Hence in order to finish the proof of Theorem \ref{int-real}, we need to consider the case when the integer $n_0$ is equal to $1$, i.e., prove the following lemma:
\begin{lem} Let $P(t)$ and $Q(t)$ be  
as in Lemma \ref{lem-1}.  If $P(t) < Q(t)$ for $\forall t \ge 1$, then for all \emph{real number} $r \ge 1$ we have
$$rP(t) < Q(t)^r\quad (\forall t \ge 1).$$
\end{lem}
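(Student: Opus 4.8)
The plan is to split on the value of $Q(1)$, which is an integer with $Q(1)\ge 2$ because $Q$ has non-negative integer coefficients and $b_q\neq 0$. I would first dispose of the trivial case $P\equiv 0$, where $rP(t)=0<Q(t)^r$ for all $r\ge1$ and $t\ge1$ since $Q(t)>0$; so assume $P\not\equiv0$. Since all coefficients of $Q$ are non-negative, $Q$ is non-decreasing on $[1,\infty)$, hence $Q(t)\ge Q(1)$ for all $t\ge1$. The point of the split is that the monotonicity technique from the proof of Lemma~\ref{r-lemma} applies cleanly precisely when $Q$ stays $\ge e$, i.e. when $Q(1)\ge 3$, whereas the borderline value $Q(1)=2$ needs a separate, hands-on treatment.

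In the case $Q(1)\ge 3$ I would argue as follows. Then $Q(t)\ge 3>e$, so $\log Q(t)>1$, for every $t\ge1$. Fixing $t$ and setting $F(z):=z^{-1}Q(t)^z$, one has $F'(z)=z^{-1}\big(\log Q(t)-z^{-1}\big)Q(t)^z\ge0$ already for all $z\ge1$ (since $z^{-1}\le1<\log Q(t)$), so $F$ is non-decreasing on $[1,\infty)$. Hence for every real $r\ge1$, $\;r^{-1}Q(t)^r=F(r)\ge F(1)=Q(t)>P(t)$, and multiplying by $r$ gives $Q(t)^r>rP(t)$.

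In the case $Q(1)=2$ the polynomials are forced to be very special: $b_2+\cdots+b_q=1$ forces exactly one $b_k=1$ (necessarily $k=q$), so $Q(t)=1+t^k$ with $k\ge2$; and $P(1)<2$ with $P(1)$ a non-negative integer and $P\not\equiv0$ forces $P(1)=1$, so $P(t)=t^m$ for a single $m\ge2$, while $P(t)<Q(t)$ for all $t\ge1$ rules out $m>k$ (otherwise $t^m-t^k\to\infty$), giving $m\le k$. Then Bernoulli's inequality for the real exponent $r\ge1$ yields, for every $t\ge1$,
\[
Q(t)^r=(1+t^k)^r\ \ge\ 1+rt^k\ \ge\ 1+rt^m\ >\ rt^m\ =\ rP(t),
\]
the middle inequality using $t\ge1$ and $m\le k$. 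I expect the only real subtlety to be recognizing that $Q(1)=2$ is exactly the obstruction to the Lemma~\ref{r-lemma}-style monotonicity argument and that this case is rigid enough ($P(t)=t^m$, $Q(t)=1+t^k$, $m\le k$) to be closed by Bernoulli; beyond that I anticipate no substantial obstacle.
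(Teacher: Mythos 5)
Your proof is correct and follows essentially the same route as the paper: the same case split on $Q(1)\ge 3$ versus $Q(1)=2$, with the second case reduced to the rigid situation $P(t)=t^m$, $Q(t)=1+t^q$, $m\le q$. The only difference is that you supply the elementary details the paper leaves implicit (the monotonicity of $z\mapsto z^{-1}Q(t)^z$ for $z\ge 1$ when $Q(t)\ge 3$, and Bernoulli's inequality in the borderline case), which is a welcome but not substantively different elaboration.
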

\begin{proof} First, by the assumption, we note that $Q(t)=1+b_2t^2 + \cdots + b_qt^q$ with $b_q \not =0$. Then we have the following two cases:
\begin{enumerate}
\item $Q(1) \ge 3$, thus $Q(t) \ge 3$ for $\forall t \ge 1$, and 
\item $Q(1)=2$, thus $Q(t) \ge 2$ for $\forall t \ge 1.$
\end{enumerate}
\begin{enumerate}
\item 
In this case, by elementary calculation we see that for any real number $r \ge 1$ 
\begin{equation*}
rQ(t) \le  Q(t)^r \quad \forall t \ge 1
\end{equation*}
Thus $rP(t) <rQ(t) <Q(t)^r.$
 \item 
In this case we have to have $Q(t)=1+t^q$. Since $P(t) <Q(t)=1+t^q$ for $\forall t \ge 1$, we have to have $P(1)<Q(1)=2$, hence $P(1)=1$ is the only possibility, thus $P(t)=t^p$ and $p \le q$ since $t^p=P(t) <Q(t)=1+t^q$ for $\forall t \ge 1$. Then, we see that for any real number $r \ge 1$ we have $rt^p < (1+t^q)^r$, i.e.,
 $rP(t) < Q(t)^r.$
\end{enumerate}
\end{proof}

\begin{rem}\label{rem-123}
Here we point out that the following modified versions of the above Theorem \ref{int-real} does not hold:
``\emph{If there exists a positive \emph{real number} $r_0$ such that $r_0P(t) < Q(t)^{r_0}$ for $\forall t \ge 1$, then for all \emph{real number} $r \ge r_0$ we have $rP(t) < Q(t)^r\quad (\forall t \ge 1).$"}  Here is a simple example. Let $P(t)=2t, Q(t)=1+t^2$. Then we have $1.6t=0.8P(t) < Q(t)^{0.8} =(1+t^2)^{0.8} (\forall t \ge 1)$ (in fact for $\forall t$). However $P(t) \not <Q(t) (\forall t \ge 1)$, simply because $P(1)=Q(1)=2$.
\end{rem}
\begin{pro} $\frak{pp}_{\mathbb R}(S^{2n+1};1)=1$.
\end{pro}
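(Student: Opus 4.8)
Recall from Section~2 that $P^{\pi}_{S^{2n+1}}(t)=t^{2n+1}$ and $P_{S^{2n+1}}(t)=1+t^{2n+1}$ (with $n\ge 1$, since $S^1$ is not simply connected), so by Definition~\ref{real-pp}
$$\frak{pp}_{\mathbb R}(S^{2n+1};1)=\inf\bigl\{r\in\mathbb R \mid q\,t^{2n+1}<(1+t^{2n+1})^q,\ \forall q\ge r,\ \forall t\ge 1\bigr\}.$$
The plan is to show that the set on the right is precisely the half-line $[1,\infty)$; its infimum is then $1$ and is in fact attained.

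\emph{Upper bound} ($\frak{pp}_{\mathbb R}\le 1$). First I would verify that $r=1$ lies in the set. Since $1\cdot t^{2n+1}<1+t^{2n+1}$ for all $t\ge 1$, the integer $n_0=1$ satisfies the hypothesis of Theorem~\ref{int-real} applied to $P(t)=t^{2n+1}$ and $Q(t)=1+t^{2n+1}$; that theorem then gives $r\,t^{2n+1}<(1+t^{2n+1})^r$ for every real $r\ge 1$ and every $t\ge 1$. (Equivalently one may argue directly by Bernoulli's inequality: with $s=t^{2n+1}>0$ one has $(1+s)^r\ge 1+rs>rs$ for all $r\ge 1$.) Thus $1$ belongs to the defining set, so $\frak{pp}_{\mathbb R}(S^{2n+1};1)\le 1$.

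\emph{Lower bound} ($\frak{pp}_{\mathbb R}\ge 1$). Next I would show that no $r<1$ lies in the set. One must note that for exponents $q\le 0$ the inequality $q\,t^{2n+1}<(1+t^{2n+1})^q$ holds automatically (left side $\le 0$, right side positive), so the obstruction comes entirely from exponents $q\in(0,1)$. Given $r<1$, choose $q_0$ with $\max\{r,0\}<q_0<1$, so $q_0\ge r$. For $t\ge 1$,
$$(1+t^{2n+1})^{q_0}\le(2t^{2n+1})^{q_0}=2^{q_0}\,t^{(2n+1)q_0},$$
and since $(2n+1)q_0<2n+1$ the ratio $q_0 t^{2n+1}/\bigl(2^{q_0}t^{(2n+1)q_0}\bigr)=2^{-q_0}q_0\,t^{(2n+1)(1-q_0)}$ tends to $\infty$. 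Hence $q_0 t^{2n+1}>(1+t^{2n+1})^{q_0}$ for all sufficiently large $t$, so the defining condition fails at $q=q_0\ge r$, and $r$ is not in the set.

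Combining the two bounds yields $\frak{pp}_{\mathbb R}(S^{2n+1};1)=1$. I do not anticipate a genuine obstacle; the only point requiring care is the lower bound, where—in contrast with the integral threshold $\frak{pp}(S^{2n+1};1)=1$, for which only integer exponents are relevant—one has to exclude every real exponent in $(0,1)$, and must separately dispose of the nonpositive exponents, for which the inequality is vacuously true.
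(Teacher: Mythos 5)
Your proof is correct and follows essentially the same route as the paper: the upper bound via Theorem \ref{int-real} applied to $P^{\pi}_{S^{2n+1}}(t)=t^{2n+1}<1+t^{2n+1}=P_{S^{2n+1}}(t)$, and the lower bound by showing the inequality fails for large $t$ at exponents $q<1$ using $(1+t^{2n+1})^{q}\le 2^{q}t^{(2n+1)q}$, which is the paper's argument in rearranged form. Your explicit handling of nonpositive exponents by choosing $q_0\in(\max\{r,0\},1)$ is a small tidying of a point the paper leaves implicit, not a different method.
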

\begin{proof} We have $P_{S^{2n+1}}(t) = 1+t^{2n+1}$ and $P^{\pi}_{S^{2n+1}}(t) = t^{2n+1}$.
Thus $P^{\pi}_{S^{2n+1}}(t) < P_{S^{2n+1}}(t)$. Hence it follows from Theorem \ref{int-real} that for any real number $r \ge 1$ we have 
$$rP^{\pi}_{S^{2n+1}}(t) < P_{S^{2n+1}}(t)^r.$$
 Therefore $\frak{pp}_{\mathbb R}(S^{2n+1};1) \le 1.$
Now we claim that  $\frak{pp}_{\mathbb R}(S^{2n+1};1)=1$, i.e.,
\begin{equation}\label{claim}
\text{ if $r<1$, then $rt^N<(1+t^N)^r \quad (\forall t \ge 1)$ \emph{does not} hold.} 
\end{equation}
Since $rt^{N(1-r)}>1$ for a sufficient large number $t$ and $2^r <1$ for $r<1$, we have that for a sufficiently large number $t$
$$rt^{N(1-r)} >2^r > \left (1 + \frac{1}{t^N}\right )^r=\frac{(1+t^N)^r}{t^{Nr}}.$$
Which implies that for a sufficient large number $t$ we have
$$rt^N = rt^{N(1-r)} \cdot t^{Nr} > (1+t^N)^r.$$
Therefore we get the above claim (\ref{claim}).
\end{proof}
\begin{lem}
$$\frak{pp}_{\mathbb R}(S^{2n};1) \le 2+ \frac{1}{2}.$$
\end{lem}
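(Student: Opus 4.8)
The plan is to reduce the statement, via Lemma~\ref{r-lemma}, to a single elementary inequality checked at the real exponent $r_0=\tfrac52$. Since $\tfrac52>\tfrac1{\log 2}$, Lemma~\ref{r-lemma}, applied with $P(t)=P^{\pi}_{S^{2n}}(t)=t^{2n}+t^{4n-1}$ and $Q(t)=P_{S^{2n}}(t)=1+t^{2n}$ (note $Q(t)\ge 2$ for $t\ge 1$), shows that once we know
\[
\tfrac52\bigl(t^{2n}+t^{4n-1}\bigr)<(1+t^{2n})^{5/2}\qquad(\forall t\ge 1),
\]
the same inequality holds with $\tfrac52$ replaced by any real $r\ge\tfrac52$, and hence $\frak{pp}_{\mathbb R}(S^{2n};1)\le\tfrac52$. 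So everything comes down to the displayed inequality; in particular we do not need the integer induction used in Example~\ref{s-2n}.

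To prove it, first substitute $s:=t^{2n}\ge 1$; then $t^{4n-1}=t^{4n}/t=s^{2}/t\le s^{2}$ since $t\ge 1$, so it suffices to establish
\[
\tfrac52(s+s^{2})<(1+s)^{5/2}\qquad(\forall s\ge 1).
\]
I would set $g(s):=(1+s)^{5/2}-\tfrac52(s+s^{2})$ and study it by elementary calculus. One computes $g'(s)=\tfrac52\bigl[(1+s)^{3/2}-1-2s\bigr]$, and squaring the equation $(1+s)^{3/2}=1+2s$ gives $s^{3}-s^{2}-s=0$, so the only critical point of $g$ with $s\ge 1$ is the golden ratio $\varphi=\tfrac{1+\sqrt5}{2}$; inspecting the sign of $(1+s)^{3/2}-1-2s$ shows $g$ is decreasing on $[1,\varphi]$ and increasing on $[\varphi,\infty)$, so $\min_{s\ge 1}g(s)=g(\varphi)$. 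Finally, using $1+\varphi=\varphi^{2}$ one gets $(1+\varphi)^{5/2}=\varphi^{5}=5\varphi+3$ and $\tfrac52(\varphi+\varphi^{2})=\tfrac52(2\varphi+1)=5\varphi+\tfrac52$, whence $g(\varphi)=\tfrac12>0$. Therefore $g(s)\ge\tfrac12>0$ for all $s\ge 1$, which yields the required strict inequality and completes the argument. The boundary behaviour is immediate: $g(1)=2^{5/2}-5=4\sqrt2-5>0$ and $g(s)\to\infty$ as $s\to\infty$ because $\tfrac52>2$.

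The only genuinely computational step is the analysis of $g$, and the point to be careful about is that the minimum is \emph{strictly} positive rather than merely nonnegative. The pleasant feature — and presumably the reason the bound comes out exactly $2+\tfrac12$ — is that the minimizer is the golden ratio and that $g(\varphi)$ equals precisely $\tfrac12$; the margin at the minimum being only $\tfrac12$ also indicates that this reasoning is close to optimal at the exponent $\tfrac52$. I note that Lemma~\ref{r-lemma} as stated only needs $Q(t)\ge 2$ on $[1,\infty)$ together with the inequality at the single exponent $\tfrac52$, not that $P$ have a nonzero degree-$2$ term, so the reduction is legitimate for every $n\ge 1$, including $n=1$ where $P^{\pi}_{S^{2}}(t)=t^{2}+t^{3}$.
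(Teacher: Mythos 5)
Your proof is correct and follows essentially the same route as the paper: bound $t^{4n-1}\le t^{4n}$, substitute $s=t^{2n}$, and verify $\frac52(s+s^2)<(1+s)^{5/2}$ for $s\ge 1$ by elementary calculus, then pass from the single exponent $\frac52$ to all $r\ge\frac52$ via Lemma~\ref{r-lemma}. You merely make explicit two points the paper leaves implicit — the appeal to Lemma~\ref{r-lemma} and the calculus details (minimum at the golden ratio with value $\tfrac12$) — which is a welcome sharpening but not a different argument.
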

\begin{proof}
We show
\begin{equation*}
(1+t^{2n})^{ 2+ \frac{1}{2}} >  \left (2+ \frac{1}{2} \right) (t^{2n} +t^{4n-1}) \quad (\forall t \ge 1).
\end{equation*}
If we can show the following, we are done since $t^{2n} +t^{4n} \ge t^{2n} +t^{4n-1}$.
\begin{equation*}
(1+t^{2n})^{ 2+ \frac{1}{2}} >  \left (2+ \frac{1}{2} \right) (t^{2n} +t^{4n}) \quad (\forall t \ge 1).
\end{equation*}
Let $X:=t^{2n}$ and consider the following function
$$F(X)= (1+X)^{ 2+ \frac{1}{2}} - \left (2+ \frac{1}{2} \right) (X + X^2) \quad (\forall X \ge 1).$$
By elementary calculus we can see that $F(X) >0$ (in fact, for $X \ge 0$).
\end{proof}
As in Example \ref{s-2n}, $\frak{pp}(S^{2n};1)=3$, clearly $\frak{pp}_{\mathbb R}(S^{2n};1) >2$, thus we have
$$2 < \frak{pp}_{\mathbb R}(S^{2n};1) < 2+ \frac{1}{2}.$$
\begin{qu} 
Does there exist a simple estimate of $\frak{pp}_{\mathbb R}(S^{2n};1)$?
\end{qu}
\begin{rem} Let us consider the implicit function $r=r(s)$ of $r(s^{2n} +s^{4n-1})=(1+s^{2n})^r$. If $s>2$, we do have $(1+s^{2n})^2 > 2(s^{2n} +s^{4n-1})$, hence it follows from Corollary \ref{coro-2} that $(1+s^{2n})^r > r(s^{2n} +s^{4n-1})$ for any real number $r \ge 2$ (for $\forall s>2$). Therefore the implicit function $r=r(s)$ satisfies that $r(s) < 2$ for $\forall s >2$.
Hence $\lim_{s \to \infty}r(s) \le 2$.  Therefore the threshold $\frak{pp}_{\mathbb R}(S^{2n};1)$ is not attained as the limit
$\lim_{s \to \infty}r(s)$. 
\end{rem}
In Lemma \ref{r-lemma} we use the condition that $r_0 \ge \frac{1}{\log 2}$. However, we can relax the condition in the following lemma.
\begin{lem}\label{r>1-lemma} Let $P(t)$ and $Q(t)$ be 
as in Lemma \ref{lem-1} and we suppose that $Q(1) \ge 3$, thus $Q(t) \ge 3 $ for $\forall t \ge 1$.
If there exists a positive real number $r_0 >1$ such that $r_0P(t)<Q(t)^{r_0} (\forall t \ge 1)$, then for any real number $r \ge r_0$ we have
$$rP(t)<Q(t)^r\quad \forall t \ge 1.$$
\end{lem}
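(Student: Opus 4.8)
The plan is to follow the proof of Lemma~\ref{r-lemma} almost verbatim, the only change being in how the monotonicity of the auxiliary function is checked on the relevant range. First I would rewrite the hypothesis $r_0P(t)<Q(t)^{r_0}$ as $P(t)<\tfrac{1}{r_0}Q(t)^{r_0}$ for all $t\ge 1$, so that for any real $r\ge r_0$ one immediately gets $rP(t)<\tfrac{r}{r_0}Q(t)^{r_0}$. The lemma is then reduced to proving the inequality $\tfrac{1}{r_0}Q(t)^{r_0}\le \tfrac{1}{r}Q(t)^{r}$ for all $t\ge 1$ and all real $r\ge r_0$, since combining the two gives $rP(t)<\tfrac{r}{r_0}Q(t)^{r_0}\le Q(t)^{r}$.

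To establish this reduced inequality, fix $t\ge 1$ and consider the function $F(z):=\tfrac{1}{z}Q(t)^{z}$ for $z>0$, whose derivative is $F'(z)=\tfrac{1}{z}\bigl(\log Q(t)-\tfrac{1}{z}\bigr)Q(t)^{z}$, exactly as in the proof of Lemma~\ref{r-lemma}. The sign of $F'(z)$ is governed by whether $\log Q(t)\ge \tfrac{1}{z}$. This is where the new hypothesis enters: since $Q(1)\ge 3$ and $Q$ has non-negative coefficients, $Q(t)\ge 3$ for all $t\ge 1$, hence $\log Q(t)\ge \log 3>1$; and since $r_0>1$, for every $z\ge r_0$ we have $\tfrac{1}{z}\le \tfrac{1}{r_0}<1<\log 3\le \log Q(t)$. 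Therefore $F'(z)\ge 0$ for all $z\ge r_0$, so $F$ is non-decreasing on $[r_0,\infty)$ and $F(r_0)\le F(r)$ whenever $r\ge r_0$, which is precisely the desired inequality.

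I do not expect any real obstacle here; the only point requiring care is the numerical chain $\tfrac{1}{r_0}<1<\log 3\le \log Q(t)$, which is exactly what lets us weaken the condition $r_0\ge \tfrac{1}{\log 2}$ from Lemma~\ref{r-lemma} at the cost of requiring $Q(1)\ge 3$ instead of merely $Q(1)\ge 2$. If one only assumed $Q(t)\ge 2$, then $\log Q(t)$ could be as small as $\log 2<1$, so for $z$ just slightly above $1$ the inequality $\log Q(t)\ge \tfrac{1}{z}$ could fail and the monotonicity argument would break down, in the spirit of the counterexample in Remark~\ref{rem-123}. Note also that the coefficient hypothesis $P(t)=a_2t^2+\cdots$ plays no role beyond placing us in the setting of Lemma~\ref{lem-1}; the argument uses only the non-negativity of the coefficients of $Q$ and the bound $Q(1)\ge 3$.
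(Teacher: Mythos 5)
Your proposal is correct and follows essentially the same route as the paper: reduce to the inequality $\tfrac{1}{r_0}Q(t)^{r_0}\le \tfrac{1}{r}Q(t)^{r}$ and deduce it from the monotonicity of $F(z)=\tfrac{1}{z}Q(t)^{z}$ for $z>1$, which the paper leaves as ``easy to see'' and you verify explicitly via $\tfrac{1}{z}<1<\log 3\le \log Q(t)$.
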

\begin{proof} Since $r_0P(t)<Q(t)^{r_0} (\forall t \ge 1)$, we have $P(t)<\frac{1}{r_0} Q(t)^{r_0} (\forall t \ge 1)$, thus for any $r \ge r_0$ we have $rP(t)<\frac{r}{r_0} Q(t)^{r_0} (\forall  t \ge 1)$. So, it suffices to show 
$$\text{$\frac{r}{r_0} Q(t)^{r_0} \le Q(t)^r (\forall t \ge 1)$, i.e., $\frac{1}{r_0} Q(t)^{r_0} \le \frac{1}{r}Q(t)^r (\forall t \ge 1)$}.$$
In other words it suffices to show that $F(r):=\frac{1}{r}Q(t)^r$ is an increasing function for $r>1$ for $\forall t \ge 1$, which is easy to see.
\end{proof}
\begin{pro}\label{1/3n-3/2n} For $\forall n \ge 3$ we have
$$1 + \frac{1}{3n} \le \frak{pp}_{\mathbb R}(\mathbb CP^n;1)  \le 1 + \frac{3}{2n}.$$
\end{pro}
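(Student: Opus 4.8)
The plan is to establish the two bounds separately; each reduces to one elementary estimate, after which the monotonicity lemmas already proved do the rest. Write $P(t):=P^{\pi}_{\mathbb{CP}^n}(t)=t^2+t^{2n+1}$ and $Q(t):=P_{\mathbb{CP}^n}(t)=1+t^2+\cdots+t^{2n}$ (Remark \ref{rem-cpn}), so that $\frak{pp}_{\mathbb R}(\mathbb{CP}^n;1)=\inf S$ with $S:=\{r\in\mathbb R\mid qP(t)<Q(t)^q\ \text{for all}\ q\ge r,\ t\ge 1\}$. The set $S$ is an up-set in $\mathbb R$, so it suffices to prove $1+\frac1{3n}\notin S$ (lower bound) and $1+\frac3{2n}\in S$ (upper bound).

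For the lower bound I would compare top-degree behaviour as $t\to\infty$. Put $r_0:=1+\frac1{3n}$. On one side $r_0P(t)>r_0\,t^{2n+1}>t^{2n+1}$; on the other, $Q(t)\le(n+1)t^{2n}$ for $t\ge 1$, hence $Q(t)^{r_0}\le(n+1)^{r_0}t^{2nr_0}=(n+1)^{r_0}t^{2n+2/3}$. Since $2n+1>2n+\frac23$, for all sufficiently large $t$ one gets $r_0P(t)>Q(t)^{r_0}$, so the defining condition of $S$ already fails at $q=r_0$; thus $r_0\notin S$ and $\frak{pp}_{\mathbb R}(\mathbb{CP}^n;1)\ge 1+\frac1{3n}$. (The same computation works for every $r_0<1+\frac1{2n}$, giving slightly more than claimed.)

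For the upper bound I would first invoke Lemma \ref{r>1-lemma}, applicable because $Q$ has the shape required in Lemma \ref{lem-1}, $Q(1)=n+1\ge 4\ge 3$, and $1+\frac3{2n}>1$; it reduces $1+\frac3{2n}\in S$ to the single inequality
\[
\Bigl(1+\tfrac3{2n}\Bigr)P(t)<Q(t)^{\,1+\frac3{2n}}\qquad(t\ge 1).
\]
The exponent is chosen so that $2n\cdot\frac3{2n}=3$: since $Q(t)>t^{2n}$ for $t\ge 1$ (strict because of the constant term), raising to the power $\frac3{2n}$ gives $Q(t)^{3/(2n)}>t^3$, whence
\[
Q(t)^{\,1+\frac3{2n}}=Q(t)\cdot Q(t)^{3/(2n)}>t^3Q(t)\ \ge\ t^3\bigl(1+t^2+t^{2n-2}+t^{2n}\bigr)=t^3+t^5+t^{2n+1}+t^{2n+3},
\]
where in the middle step I keep only four monomials of $Q$, which are distinct precisely because $n\ge 3$. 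It then remains to check $t^3+t^5+t^{2n+1}+t^{2n+3}>(1+\frac3{2n})(t^2+t^{2n+1})$, i.e.\ $t^3+t^5+t^{2n+3}>(1+\frac3{2n})t^2+\frac3{2n}t^{2n+1}$, which is immediate upon adding $t^3\ge t^2$, $t^5>\frac3{2n}t^2$ and $t^{2n+3}>\frac3{2n}t^{2n+1}$ (all valid for $t\ge 1$, $n\ge 2$). This yields $1+\frac3{2n}\in S$, so $\frak{pp}_{\mathbb R}(\mathbb{CP}^n;1)\le 1+\frac3{2n}$.

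The only substantive point is spotting this normalization: the exponent $1+\frac3{2n}$ is exactly the one for which the ``extra'' factor $Q(t)^{3/(2n)}$ dominates $t^3$, thereby producing a term $t^{2n+3}$ that outgrows the top term $t^{2n+1}$ of $P(t)$ while the constant and middle terms of $Q$ absorb the low-degree part. After that there is no real obstacle; the minor cautions are verifying the hypotheses of Lemma \ref{r>1-lemma} and noting that the four retained monomials of $Q$ are distinct exactly when $n\ge 3$, which is the hypothesis of the proposition, and no case analysis in $n$ or $t$ is needed.
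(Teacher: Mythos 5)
Your proof is correct and follows essentially the same route as the paper: the upper bound via Lemma \ref{r>1-lemma} together with the four-monomial estimate $Q(t)^{1+3/(2n)} > t^3\bigl(1+t^2+t^{2n-2}+t^{2n}\bigr)$, and the lower bound by comparing top degrees ($2n+1$ versus $2n r_0$) for large $t$ to show failure at $q=1+\tfrac{1}{3n}$. Your closing observation that the same degree count rules out every $r_0<1+\tfrac{1}{2n}$ is a valid slight sharpening of the stated lower bound.
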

\begin{proof} Let $n \ge 3$. First we show
\begin{equation}\label{3/2n}
(1+t^2+t^4+ \cdots +t^{2n})^{1+\frac{3}{2n}} > \left (1+\frac{3}{2n} \right )(t^2 + t^{2n+1}) \quad (\forall t \ge 1).
\end{equation}
Indeed, we have
\begin{align*}
& (1+t^2+\cdots +t^{2n-2}+ t^{2n})^{1+\frac{3}{2n}} - \left (1+\frac{3}{2n} \right )(t^2 + t^{2n+1}) \\
& \ge  (1+t^2+t^{2n-2} +t^{2n})(1+t^2+t^{2n-2} +t^{2n})^{\frac{3}{2n}} - 2(t^2 + t^{2n+1}) \\
& > (1+t^2+t^{2n-2} +t^{2n})(t^{2n})^{\frac{3}{2n}} - 2(t^2 + t^{2n+1}) \\
& = (1+t^2)t^3 -2t^2 + (1+t^2)t^{2n+1} - 2t^{2n+1}\\
& \ge 2t^3-2t^2\, \, (\forall t \ge 1)\\
& \ge 0.
\end{align*}
Hence it follows from Lemma \ref{r>1-lemma} that $\frak{pp}_{\mathbb R}(\mathbb CP^n;1)  \le 1 + \frac{3}{2n}.$
Next we show that there exists some $s\ge 1$ such that
\begin{equation}\label{1/3n}
\left (1+\frac{1}{3n} \right )(s^2 +s^{2n+1}) > (1+s^2+ \cdots +s^{2n})^{1+\frac{1}{3n}}.
\end{equation}
This implies that $1 + \frac{1}{3n} \le \frak{pp}_{\mathbb R}(\mathbb CP^n;1)$.
Let $t >2$. Then we first observe that
\begin{equation*}
1+t^2+t^4 + \cdots +t^{2n} <2t^{2n},
\end{equation*}
which follows from
\begin{equation*}
1+t^2+t^4 + \cdots +t^{2n} = \frac{t^{2n+2}-1}{t^2-1} < \frac{t^{2n+2}}{t^2-1} = \left (\frac{t^2} {t^2-1} \right )t^{2n} < 2t^{2n} \quad \text{(since $t>2$)}.
\end{equation*}
Now we have
\begin{align*}
& \left (1+\frac{1}{3n} \right )(t^2 +t^{2n+1}) - (1+t^2+ \cdots +t^{2n})^{1+\frac{1}{3n}}\\
& > \left (1+\frac{1}{3n} \right )(t^2 +t^{2n+1}) - (2t^{2n})^{1+\frac{1}{3n}}\\
& > t^{2n+1} - 2^{1+\frac{1}{3n}}t^{2n+ \frac{2}{3}}\\
& = t^{2n+ \frac{2}{3}} \left (t^{\frac{1}{3}} - 2^{1+\frac{1}{3n}} \right )\\
& > 0 \quad \text{for $\forall t> \left (2^{1+\frac{1}{3n}} \right )^3$}.
\end{align*}
Hence if $s>\left (2^{1+\frac{1}{3n}} \right )^3$, then we have the above inequality (\ref{1/3n}).
\end{proof}

{\bf Acknowledgements:} We would like to thank W. Zudilin, R. Tijdeman, L. Maxim and the referee
 for suggestions and comments. S.Y. is supported by JSPS KAKENHI Grant Number JP19K03468. \\


\end{document}